\documentclass[letterpaper, 10pt]{article}
\nocite{*}
\usepackage{graphicx}
\graphicspath{./Graphs/}
\usepackage{alltt,placeins,caption,subcaption,tikz}
\usetikzlibrary{shapes.geometric, arrows.meta}
\usepackage{amsmath,amsthm,amssymb,cite}
\usepackage[hidelinks, pdftex]{hyperref}
\newtheorem{theorem}{Theorem}[section]
\newtheorem{lemma}[theorem]{Lemma}
\newtheorem{corollary}[theorem]{Corollary}
\newtheorem{definition}[theorem]{Definition}
\newtheorem*{theorem*}{Main Result}
\usepackage{breqn}

\begin{document}

\begin{center}
\LARGE
\textbf{Classifying Prime Character Degree Graphs With Eight Vertices}\\[20pt]
\small
\textbf {Mark L. Lewis, Andrew Summers}\\[20pt]

Department of Mathematical Sciences, Kent State University, Kent, OH 44242, USA \\lewis@math.kent.edu\\[10pt]

Department of Mathematical Sciences, Kent State University, Kent, OH 44242, USA \\ asumme19@kent.edu\\[20pt]
\end{center}

\begin{abstract}
    In this paper, an effort is made to classify which prime character degree graphs having eight vertices occur for some finite solvable group. To approach this, we compile known results and constructions from the literature which are used to develop a general algorithm to begin classifying graphs of any order. We then apply the algorithm to the graphs of order eight. Of the 12,346 non-isomorphic graphs with eight vertices, 1,229 are disconnected and are fully classified. Meanwhile, 37 of the 11,117 non-isomorphic connected graphs are shown to occur; 34 of which are constructed via direct products and 3 of which have diameter three. Fifty-six graphs are shown not to occur, several of which fall into previously studied families, while the classification of 206 graphs is still unknown.
\end{abstract}\vspace{5mm}

\noindent\textbf{Keywords:} character degree graphs, classification, ordinary character degrees, finite solvable groups\vskip 5mm

\noindent\textbf{Mathematics Subject Classification:} Primary 20C15, Secondary\\ 05C25, 20D10

\section{Introduction}
Throughout this paper, $G$ will be a finite solvable group. Let $\textup{Irr}(G)$ and $\textup{cd}(G)$ denote the set of irreducible ordinary characters and irreducible ordinary character degrees of $G$ respectively. That is, $\textup{cd}(G)=\{\chi(1):\chi\in\textup{Irr}(G)\}$. We define $\rho(G)$ to be the set of all primes which divide some element of $\textup{cd}(G)$. With this notation in mind, we define the prime character degree graph (or simply character degree graph) of $G$. Denoted by $\Delta(G)$, this simple graph has vertex set $\rho(G)$ and an edge between two primes $p,q\in\rho(G)$ if $pq\mid a$ for some $a\in\textup{cd}(G)$. Thus, it follows that the primes in $\rho(G)$ and the vertices of $\Delta(G)$ are in one-to-one correspondence and will frequently be used interchangeably. There has been significant investigation into character degree graphs (for
 a summary see \cite{lewis2008overview} and for background see \cite{isaacs1994character} as well as Chapter 5, Section 18 of \cite{manz1993representations}).

 The main goal of this paper is to develop an algorithmic approach to classifying character degree graphs with a fixed number of vertices, and in doing so, to identify which graphs are resistant to the current techniques found in the literature. To approach this goal, we compile known results and constructions, both classical and contemporary, from the literature and organize them into an algorithm which we apply to the graphs of order eight. We note that this algorithm is not specific to graphs of order eight and can potentially be used to begin a classification of graphs with any number of vertices.
 
 Such classifications have previously taken place for graphs of smaller order. In \cite{huppert1991research}, Huppert lists all graphs $\Gamma$ with four or fewer vertices which ``occur", that is, $\Gamma=\Delta(G)$ for some finite solvable $G$. In \cite{lewis2004classifying}, Lewis completes this classification for graphs of order five with the exception of one graph. Bissler and Lewis classify the occurring graphs with six vertices in \cite{bissler2019classifying}, with nine graphs eluding classification. Most recently, Laubacher, Medwid and Schuster investigated graphs of order seven in \cite{laubacher2023classifying}, where forty-four graphs remained unclassified.

 Using the techniques introduced in \cite{MCKAY201494}, one can list 12,346 non-isomorphic graphs with eight vertices. Of these, 11,117 are connected, while the remaining 1,229 are disconnected. Thankfully, a significant number of these graphs can be tamed by applying a combination of P\'alfy's landmark result from \cite{palfy1998character} and its generalization in \cite{akhlaghi2018character}. Both results will be stated, along with several other important preliminaries, in the following section. The classification algorithm will be presented and discussed in section 3. In section 4, the disconnected graphs will be fully classified, while sections 5.1 through 5.4 will explore the connected graphs. Our main result is as follows:
 \begin{theorem*}
 The graphs with eight vertices which occur as $\Delta(G)$ for some finite solvable $G$ are the graphs given in figures 3,6,7,8,14,18 and possibly those in 13 as well as the graphs $C_k$ and $D_k$ mentioned in the final paragraphs of sections 5.3 and 5.4.
 \end{theorem*}\vspace{4mm}

 Finally, we point out that the work in this paper was done by the second author as a Ph.D. candidate under the supervision of the first author at Kent State University, and the contents may appear in the second author's Ph.D. dissertation.

\section{Machinery}
In an effort to make this paper as self-contained as possible, many of the important results used to classify occurring graphs will be given in this section in addition to some common graph theoretic terminology and a method for constructing new character degree graphs from known occurring ones.
\subsection{Overall Results}
As previously stated, a major result which eliminates large numbers of graphs is due to P\'alfy:

\begin{theorem}[P\'alfy's Condition \cite{palfy1998character}] Let $G$ be a solvable group and $\pi$ be a set of primes contained in $\Delta(G)$. If $|\pi|=3$, then there exists an irreducible  character of $G$ with degree divisible by at least two of the primes from $\pi$. (In other words, any three vertices of the prime character degree graph of a solvable group span at least one edge.)
\end{theorem}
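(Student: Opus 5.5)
We argue by contradiction, taking $G$ to be a counterexample of minimal order: there are primes $p,q,r\in\rho(G)$, no two of which divide a common degree in $\textup{cd}(G)$. The hypothesis passes to quotients, since $\textup{cd}(G/N)\subseteq\textup{cd}(G)$. By minimality, every proper quotient $G/N$ must therefore fail to have all three primes in $\rho(G/N)$. This gives strong structural control over $G$. Let $\pi=\{p,q,r\}$. For a normal subgroup $N$, the primes of $\pi$ lost in $G/N$ reappear in degrees of characters lying over nontrivial characters of $N$. Using Clifford theory, if $\theta\in\textup{Irr}(N)$ and $\chi$ lies over $\theta$, then $\chi(1)/\theta(1)$ divides $|G:N|$. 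Also, $|G:I_G(\theta)|$ divides $\chi(1)$. So orbit sizes in the action of $G$ on $\textup{Irr}(N)$ are degrees up to multiples, and each can involve at most one prime of $\pi$.

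First reductions. Pass to $G/\Phi$-type quotients and use Itô's theorem: a prime $s$ lies outside $\rho(G)$ iff $G$ has a normal abelian Sylow $s$-subgroup. Then reduce to the case where $G$ has a unique minimal normal subgroup $V$, an elementary abelian $t$-group. If there were two, $N_1\cap N_2=1$ and $G$ would embed in $G/N_1\times G/N_2$. Each prime of $\pi$ appears in the degrees of one of these quotients, and the quotients are proper, so each sees at most two primes of $\pi$. One must then argue that some degree of $G$ is divisible by two of them. This needs care, since $\textup{cd}$ of a subdirect product is not simply the set of products, so I would instead use a Fitting-subgroup argument. Let $F=F(G)$. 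By the standard Gaschütz argument, $G/F$ acts faithfully and completely reducibly on $V=F/\Phi(G)$, and $\textup{Irr}(V)$ is the dual module.

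Main step. Two sources feed into $\pi$: the degrees of $G/F$, and the orbit sizes of $G/F$ on $\widehat V$, with the latter appearing as divisors of degrees (and a $t$-part possibly coming from $F$ nonabelian). Since $G/F$ is a solvable group acting faithfully and completely reducibly, Gluck/Wolf/Seress-type orbit theorems apply. In the form I would use, there is a regular orbit, or an orbit of size divisible by every prime in $\pi(G/F)$ outside small exceptional cases. In particular, some orbit has size divisible by the product of all primes dividing $|G/F|$ except possibly $2$ and $3$ in a few explicit cases. By Itô, any prime of $\pi$ other than $t$ divides $|G/F|$, so such an orbit immediately yields a degree divisible by two primes of $\pi$ whenever $t\notin\pi$. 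In the case $t\in\pi$, a second orbit argument or the nonabelian $F$ combines $t$ with the orbit primes.

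The main obstacle is this orbit step, specifically handling the exceptional actions where no large orbit exists: semilinear groups $\Gamma(q^n)$ and small wreath/primitive groups. I would first reduce to $V$ primitive by the standard induction from imprimitive decompositions. In the imprimitive case, orbit products over blocks give large orbits; this is Pálfy's original lemma. Then, in the primitive case, use the structure of primitive solvable linear groups: a normal cyclic subgroup of $\Gamma(q^n)$, or an extraspecial normal subgroup. With this, one checks directly that the orbit sizes, together with the degrees of $G/F$ (which are controlled inductively by minimality), always place two of $p,q,r$ into one degree. This yields the contradiction.
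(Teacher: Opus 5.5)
The paper does not prove this statement; it quotes it from P\'alfy \cite{palfy1998character}. Your proposal therefore has to stand on its own, and as written it does not. The reductions are reasonable bookkeeping. The decisive step, however, rests on an orbit theorem that is false in the form you state, and the analysis that would have to replace it is exactly what you leave as ``one checks directly.''

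\textbf{The orbit step fails.} You claim that a solvable group acting faithfully and completely reducibly has an orbit whose size is divisible by every prime dividing $|G/F|$, except possibly $2$ and $3$ in a few explicit cases. Take any prime $p$ and let $H=\mathbb{F}_{2^p}^{\times}\rtimes\textup{Gal}(\mathbb{F}_{2^p}/\mathbb{F}_2)$ act on $V=\mathbb{F}_{2^p}$; the induced action on $\textup{Irr}(V)$ is of the same semilinear type. The multiplicative group is regular on $V\setminus\{0\}$. Hence $H$ is transitive there with point stabilizers of order $p$, and every nontrivial orbit has size $2^p-1\equiv 1 \pmod p$.

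So $p$ divides $|H|$ but divides no orbit size, and this happens for every prime $p$. The semilinear groups you yourself list as exceptions form an infinite family in which any prime can be missed. It is precisely the family the paper uses in Section 4 (with $p=491$) to produce disconnected graphs. In $V\rtimes H$ with $\pi=\{p,r,s\}$ and $r,s\mid 2^p-1$, the edge comes from $r$ and $s$ lying in the same orbit size, while $p$ is seen only in $\textup{cd}(H)$.

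A proof must handle exactly these configurations. It has to show that when a prime of $\pi$ is missed by every orbit, two primes of $\pi$ still divide a common degree. Your final sentence asserts this, but it is the whole content of the theorem. Minimality does not help: it gives only $\pi\not\subseteq\rho(G/F)$, not that $\textup{cd}(G/F)$ supplies any edge.

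\textbf{Earlier unsupported steps.} Once you drop the reduction to a unique minimal normal subgroup, $F/\Phi(G)$ can have mixed characteristic. There is then no single $t$, and ``every prime of $\pi$ other than $t$ divides $|G:F|$'' has no justification. A prime $r\in\pi$ may have a normal nonabelian Sylow subgroup inside $F$. In that case $r$ appears neither in $\textup{cd}(G/F)$ nor in the orbit sizes on $\textup{Irr}(F/\Phi(G))$, since these divide $|G:F|$. Pairing $r$ with another prime of $\pi$ then needs information about the action on nonlinear characters of $F$, which your Gasch\"utz setup does not provide.

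Similarly, you cannot simply pass to $G/\Phi(G)$. Primes of $\rho(G)$ can vanish there; for example, an extraspecial group has abelian Frattini quotient.
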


Graphs for which Theorem 2.1 holds are said to satisfy P\'alfy's condition. Using more graph theoretic terminology, P\'alfy's condition states that the compliment graph $\overline{\Delta}(G)$ contains no 3-cycles. This result was generalized further in \cite{akhlaghi2018character} to show that, in fact, $\overline{\Delta}(G)$ contains no cycles of any odd length.

\begin{theorem}[Theorem A of \cite{akhlaghi2018character}] Let $G$ be a finite solvable group. Then the graph $\overline{\Delta}(G)$ does not contain any cycle of odd length.
\end{theorem}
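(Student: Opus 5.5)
We will argue by contradiction, working with the complement graph directly. Suppose $\overline{\Delta}(G)$ contains an odd cycle, and choose $G$ of minimal order with this property. Then choose an odd cycle $p_1,p_2,\dots,p_{2k+1}$ in $\overline{\Delta}(G)$ of minimal length. Minimality of the length makes the cycle induced. If a chord $p_ip_j$ were a non-edge of $\Delta(G)$, it would split the cycle into two shorter cycles, one of which is odd. So for $2k+1\ge 5$ every non-consecutive pair $p_i,p_j$ is adjacent in $\Delta(G)$. The case $2k+1=3$ is exactly Pálfy's Condition (Theorem 2.1), so we may assume $2k+1\ge 5$. Let $\pi=\{p_1,\dots,p_{2k+1}\}$.

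The next step is the standard reduction used for Pálfy-type results, carried out in the following order.
\begin{enumerate}
\item Pass to quotients. Non-edges of $\Delta(G)$ remain non-edges in $\Delta(G/N)$, but vertices can disappear. So by minimality, for every nontrivial normal subgroup $N$, some $p_i$ must fail to lie in $\rho(G/N)$.
\item Use this to force $G$ to have a unique minimal normal subgroup $V$, which is an elementary abelian $r$-group. If there were two such subgroups $N_1,N_2$, then $G$ embeds in $G/N_1\times G/N_2$. Each irreducible character degree of $G$ then divides a product of degrees from the two quotients, and combining this with step 1 should yield a contradiction.
\item Write $F=F(G)$. Gallagher and Clifford theory show that primes $p_i\neq r$ whose Sylow subgroups act nontrivially on $\mathrm{Irr}(V)$ or on $F$ appear as divisors of orbit sizes. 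We then reduce to the situation where $G/F$ acts on the module $\hat V=\mathrm{Irr}(V)$, and where, for characters $\lambda\in\hat V$ and $\theta\in\mathrm{Irr}(F)$, the index $|G:I_G(\lambda)|$ divides degrees of characters in $\mathrm{Irr}(G)$.
\end{enumerate}

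The heart of the argument is an orbit-theoretic statement. Let $H$ be a solvable group acting faithfully and completely reducibly on a finite module $V$. We need to show that it is impossible for $\pi$ to have odd cycle structure in the following sense: consecutive primes are never both divisors of a single orbit size (or character degree of $H$ times orbit size), while all other pairs are.
\begin{itemize}
\item First reduce to $V$ quasi-primitive, using the wreath product/induced module structure; in the imprimitive case, odd-cycle arguments on the permutation part handle the reduction.
\item Then use the structure of quasi-primitive solvable linear groups: $H\le \Gamma(V)$, or $H$ has a normal extraspecial-type subgroup $E$ with $H/E$ acting on $E/Z(E)$ symplectically.
\item In the semilinear case, $\Delta$ of $V\rtimes H$ is essentially determined by the cyclic-by-cyclic structure. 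Here one checks directly that the non-adjacency relation is bipartite, since primes dividing $|H\cap\Gamma_0(V)|$ versus those dividing only the Galois part split into two cliques in the complement.
\item In the extraspecial case, use large orbit results (regular orbits of $H/E$ on $E/Z(E)$, Espuelas/Seress-type bounds) to produce an orbit divisible by all but few primes. This contradicts the requirement that $p_1,\dots,p_{2k+1}$ avoid one another cyclically.
\end{itemize}

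The main obstacle I expect is the quasi-primitive, non-semilinear case with small exceptional modules, where regular orbits fail (e.g.\ $|V|\in\{3^2,3^4,5^2,7^2,2^6,\dots\}$). There I would enumerate the finitely many exceptional pairs $(H,V)$ and verify by direct computation of orbit sizes that the complement of the resulting degree graph on the relevant primes is bipartite.
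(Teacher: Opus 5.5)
The paper does not prove this statement; it quotes it as Theorem A of \cite{akhlaghi2018character}. Your outline therefore has to stand on its own, and its load-bearing steps are either missing or do not work.

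\textbf{Step 2 fails.} You claim that every $\chi(1)$ divides some $\alpha(1)\beta(1)$ with $\alpha\in\mathrm{Irr}(G/N_1)$ and $\beta\in\mathrm{Irr}(G/N_2)$. This is not justified: being a constituent of $(\alpha\times\beta)_G$ does not force $\chi(1)\mid\alpha(1)\beta(1)$. Even granting it, you get nothing, because $p_ip_{i+1}\mid\alpha(1)\beta(1)$ produces no edge in $\Delta(G/N_1)$ or in $\Delta(G/N_2)$. Step 1 only says that each $G/N_j$ loses \emph{some} cycle vertex. Since $\rho(G)=\rho(G/N_1)\cup\rho(G/N_2)$ by Ito--Michler, the cycle can simply be split between the two quotients. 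A contradiction would have to come from characters lying over $\theta_1\times\theta_2\in\mathrm{Irr}(N_1\times N_2)$, whose inertia index $|G:I_G(\theta_1)\cap I_G(\theta_2)|$ is divisible by both orbit lengths, together with a genuine case analysis. None of this is present, and everything afterwards rests on the uniqueness of the minimal normal subgroup.

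\textbf{The core statement is mis-formulated.} Adjacency coming from $\lambda\in\mathrm{Irr}(V)$ is witnessed by the numbers $|G:I_G(\lambda)|\,\psi(1)$ with $\psi\in\mathrm{Irr}(I_G(\lambda)\mid\lambda)$. It is not witnessed by a degree of $H$ times an orbit length. Take $G=V\rtimes H$ with $V=\mathbb{F}_8$ and $H=\mathbb{F}_8^{\times}\rtimes\mathrm{Gal}(\mathbb{F}_8/\mathbb{F}_2)$ of order $21$. Then $H$ has a character of degree $3$ and an orbit of length $7$ on $\mathrm{Irr}(V)$, yet $\mathrm{cd}(G)=\{1,3,7\}$. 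So ruling out your configuration for $(H,V)$ does not rule it out for $G$, for two reasons:
\begin{enumerate}
\item Consecutive primes can be non-adjacent in $\Delta(G)$ but ``adjacent'' in your sense, as $3$ and $7$ are here.
\item Non-consecutive adjacencies in $\Delta(G)$ can come from characters your model does not see: degrees of inertia subgroups, a nonabelian $F(G)$, or the characteristic prime $r$.
\end{enumerate}

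\textbf{The case analysis is placeholders exactly where the content lies.}
\begin{enumerate}
\item The imprimitive reduction is just ``odd-cycle arguments on the permutation part''.
\item In the semilinear case, what you need are two cliques of $\Delta$, i.e.\ independent sets of $\overline{\Delta}$, not ``two cliques in the complement''. You must also say where $r$ goes, and where primes dividing both $|H\cap\Gamma_0(V)|$ and $|H:H\cap\Gamma_0(V)|$ go.
\item In the extraspecial case, ``an orbit divisible by all but few primes'' gives a contradiction only if the orbit is divisible by at least $k+1$ of the $2k+1$ cycle primes. An induced odd cycle in $\overline{\Delta}(G)$ is compatible with degrees divisible by $k$ pairwise non-consecutive cycle primes.
\end{enumerate}
The results you name (regular orbits of $H/E$ on $E/Z(E)$, Espuelas/Seress-type regular orbits on direct sums of copies of $V$) do not by themselves give a single orbit on $V$ whose length is divisible by $k+1$ prescribed primes. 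Forcing a degree divisible by two consecutive cycle primes is the whole difficulty beyond P\'alfy's condition, and the proposal never supplies a mechanism for it.
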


Recall that a complete graph is a graph in which all vertices are pairwise adjacent to one another. We denote the complete graph on $n$ vertices using the common notation $K_n$. Noting that a \textbf{clique} of a graph $\Gamma$ is simply a complete subgraph of $\Gamma$, we also have the following important corollary to Theorem 2.2: 

\begin{corollary}[Corollary B of \cite{akhlaghi2018character}] Let $G$ be a finite solvable group. Then $\rho(G)$ is covered by two subsets, each inducing a clique in $\Delta(G)$.
\end{corollary}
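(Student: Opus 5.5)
The plan is to deduce Corollary 2.3 from Theorem 2.2 by a purely graph-theoretic argument. The key observation is that a graph has no cycles of odd length exactly when it is bipartite. So I apply Theorem 2.2 to the complement graph $\overline{\Delta}(G)$. This graph has vertex set $\rho(G)$, and two distinct primes are adjacent in it exactly when they are \emph{not} adjacent in $\Delta(G)$. By Theorem 2.2 it contains no odd cycle, and hence it is bipartite.

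The one step that needs an argument is the standard fact that a finite graph with no odd cycle is bipartite; I would include a short proof to keep the argument self-contained.
\begin{itemize}
\item Work one connected component at a time and fix a base vertex $v$ in that component.
\item Colour each vertex $u$ of the component by the parity of its distance $d(v,u)$ from $v$.
\item Suppose some edge $xy$ joined two vertices of the same parity. Take shortest paths from $v$ to $x$ and from $v$ to $y$, and let $w$ be the last vertex they share.
\item Because both paths are shortest paths, $d(w,x)$ and $d(w,y)$ have the same parity.
\item The walk from $w$ to $x$, across the edge $xy$, and back to $w$ along the path through $y$ has length $d(w,x)+d(w,y)+1$. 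This is odd, and by the choice of $w$ it is a genuine cycle.
\item That contradicts Theorem 2.2, so this parity colouring is a proper $2$-colouring.
\item Taking the union over all components gives a partition $\rho(G)=A\cup B$ with no edge of $\overline{\Delta}(G)$ inside $A$ or inside $B$. Either part may be empty, for example when $\overline{\Delta}(G)$ has no edges.
\end{itemize}

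Finally I translate this back to $\Delta(G)$. Let $p,q$ be distinct primes in $A$. They are not adjacent in $\overline{\Delta}(G)$, so by the definition of the complement they are adjacent in $\Delta(G)$. Hence $A$ induces a clique in $\Delta(G)$, and the same argument applies to $B$. Since $A\cup B=\rho(G)$, these two subsets cover $\rho(G)$, which proves the corollary. An empty or single-vertex part counts trivially as a clique.
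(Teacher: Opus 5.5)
Your proposal is correct and follows the same route the paper takes: the paper cites this as a corollary of Theorem 2.2 without giving a proof, and the intended deduction is exactly that $\overline{\Delta}(G)$ has no odd cycle, so it is bipartite, and its two colour classes induce cliques in $\Delta(G)$. Your self-contained proof of the bipartiteness criterion is fine; the one point it leaves implicit is that a vertex common to two shortest paths from $v$ lies at the same distance $d(v,\cdot)$ on both, which is what makes $w$ well defined and the closed walk a genuine odd cycle of length at least $3$.
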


Corollary 2.3 gives a natural way of sorting the graphs we wish to classify. Namely, we will sort the connected graphs with eight vertices by the largest occurring clique size (as done in \cite{laubacher2023classifying}). These different clique size pairs will be represented in sections 5.1 through 5.4.\vspace{1mm}

It is quite common to encounter disconnected graphs or subgraphs when studying prime character degree graphs. Note that by P\'alfy's condition, if $\Delta(G)$ is disconnected, it must have exactly two connected components, each of which is a complete graph. An additional result of P\'alfy's strengthens the already strong conditions placed on a disconnected character graph by showing that one connected component must be sufficiently larger than the other.

\begin{theorem}[Theorem 3 of \cite{palfy2001character}]
    Suppose that the character degree graph of the solvable group $G$ has two connected components and let the cardinalities of the two components be $n$ and $N$ with $n\leq N$. Then $N\geq2^n-1$.
\end{theorem}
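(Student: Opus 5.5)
This is Pálfy's theorem on disconnected graphs, quoted here from \cite{palfy2001character}. My plan follows the structure of that original argument: reduce to a tightly structured situation, then count.

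\textbf{Reduction.} Suppose $\Delta(G)$ has two components, of sizes $n\le N$. The classification of solvable groups with disconnected character degree graphs (Lewis's six types) tells us that, after passing to a suitable quotient or section with the same graph, the interesting configuration is an affine action. There is a normal elementary abelian $p$-subgroup $V$, regarded as a faithful irreducible module for a complement $H$, and $H$ acts with $\textup{Irr}(V)$ split into orbits whose sizes realize the degrees.

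In that setting one component, say $\pi_1$, consists of the primes dividing $|H:F(H)|$ or the relevant orbit sizes. The other component, $\pi_2$, consists of primes coming from a cyclic (semilinear) part. The key structural fact to extract is that the small component of size $n$ corresponds to primes $q_1,\dots,q_n$ dividing the order of a cyclic subgroup acting fixed-point-freely. Then $|V|=p^m$ with $q_1\cdots q_n \mid p^m-1$ in a controlled way, and the other component contains every prime divisor of certain $p^{d}-1$, or of Zsigmondy-type factors.

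\textbf{Counting.} I then use Zsigmondy's theorem to produce distinct primes. For each nonempty subset $S\subseteq\{q_1,\dots,q_n\}$ I set $d_S=\prod_{q\in S}q$ and consider the divisor $p^{m/d_S}$ structure. More precisely, for each divisor $d$ of the relevant exponent that is a product of a nonempty subset of the $q_i$, I take a primitive prime divisor $r_d$ of $p^{k d}-1$, with $k$ fixed. These $r_d$ are pairwise distinct by primitivity.

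Next I show each $r_d$ lies in the large component. Such a prime must divide a character degree that is not divisible by any $q_i$, so it cannot lie in the small component, and it is not isolated. This produces $2^n-1$ distinct primes in the other component, giving $N\ge 2^n-1$.

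\textbf{Main obstacle.} The hard part is verifying that the exponent of $|V|$ really is divisible by every product of the $q_i$, and handling the Zsigmondy exceptions $(p,k)=(2,6)$ and Mersenne cases. These exceptions may require small ad hoc arguments, or a separate check that the missing prime is replaced by another one.

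The reduction to the affine/semilinear case is the substantial group-theoretic step. I would cite the structural classification rather than reprove it. For the other configurations in that classification, where $G$ has a normal nonabelian Sylow subgroup, I would check the inequality directly, since there the small component is typically a single prime, and $N\ge1$ is trivial.
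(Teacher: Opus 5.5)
The paper gives no proof of this statement; it is quoted from P\'alfy. Your sketch therefore has to stand on its own, and its central step is asserted rather than proved.

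\textbf{The main gap.} You claim each $r_d$ ``must divide a character degree that is not divisible by any $q_i$,'' but nothing you wrote shows that $r_d$ even divides $|G|$. Knowing $r_d\mid p^m-1$ is not enough. Take $H\le\Gamma(p^m)=\mathbb{F}_{p^m}^{\times}\rtimes\textup{Gal}(\mathbb{F}_{p^m}/\mathbb{F}_p)$ acting on $V$ of order $p^m$. The cyclic part $C=H\cap\mathbb{F}_{p^m}^{\times}$ is in general a proper subgroup of $\mathbb{F}_{p^m}^{\times}$, and only primes of $|C|$ appear in the relevant degrees.

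The missing idea is to use disconnectedness to force $C$ to be large. In the basic case, $b=|H:C|$ is coprime to $|C|$ and its primes lie in the small component. The argument then runs as follows.
\begin{itemize}
\item $C$ acts semiregularly on $\textup{Irr}(V)\setminus\{1_V\}$, so every character of $G$ over such a $\lambda$ has degree divisible by $|C|$.
\item Hence $|H:H_\lambda|$ involves no prime of $b$. Since $H_\lambda\cap C=1$, this gives $|H_\lambda|=b$.
\item So every stabilizer is a complement to $C$, conjugate to one complement $B$. After conjugation in $\Gamma(p^m)$, $B$ is the Galois subgroup of order $b$, and it fixes exactly $p^{m/b}-1$ nontrivial characters.
\item Each $H$-orbit has $|C|$ elements, of which exactly $|C_C(B)|$ are $B$-fixed. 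Comparing counts gives $|C:C_C(B)|=(p^m-1)/(p^{m/b}-1)$.
\end{itemize}
Only now does every primitive prime divisor of $p^{(m/b)d}-1$ with $1<d\mid b$ provably divide $|C|$, which is a character degree.

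\textbf{The roles of the components.} This also shows you have the roles reversed. The fixed-point-free cyclic part $C$ gives the \emph{large} component. The small-component primes divide $b$, which divides $m$ because $H/C$ embeds in the cyclic group $\textup{Gal}(\mathbb{F}_{p^m}/\mathbb{F}_p)$ of order $m$. So divisibility of the exponent by $q_1\cdots q_n$, your ``main obstacle,'' is automatic. Your claim $q_1\cdots q_n\mid p^m-1$ is false in general: for the paper's $G_2$, neither $11$ nor $13$ divides $2^{143}-1$. It is also not what the argument needs.

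\textbf{Zsigmondy exceptions.} Once $(p^m-1)/(p^{m/b}-1)$ divides $|C|$, the exceptions are handled by coprimality of $|C|$ and $b$. For example, in the Mersenne case $2\mid b$ makes $m$ even, so $(p^m-1)/(p-1)$ is even. The one remaining case is $p=2$, $m/b=3$, $d=2$; there $3$ divides $|C|$ and replaces the missing primitive divisor of $2^6-1$.

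\textbf{The other configurations.} Saying ``the small component is typically a single prime'' is not an argument. If you rely on the classification, you must check, for each configuration other than the semilinear one, that some component is a single vertex, or else run the count there too. Also, a quotient only gives an induced subgraph of $\Delta(G)$, so any reduction to a section must be shown to keep the small component intact.
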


This result, often referred to as P\'alfy's inequality, will help significantly when classifying the disconnected character degree graphs in section 4. Additionally, in \cite{lewis2025number}, a function is presented which gives the number of component size pairs that saitsfy P\'alfy's inequality in terms of the order of the graphs being studied. These results together allow for the non-occurring disconnected graphs of any order to be quickly eliminated.\vspace{5mm}

Finally, recall that a \textbf{cut vertex} of a graph $\Gamma$ is a vertex whose removal causes $\Gamma$ to have a greater number of connected components. Lewis and Meng showed in \cite{lewiscutvertex} that an occurring graph has at most one cut vertex. 

\subsection{Graphs of Diameter Three}
The \textbf{diameter} of a graph is the longest of all shortest paths between any two vertices. In \cite{wolf1989diameter}, it was shown that for a finite solvable group $G$, $\Delta(G)$ has diameter at most three, although no examples were known at the time. In \cite{lewis2002solvable}, the first author constructed an example of a group whose character degree graph has six vertices and diameter three, confirming that such groups do exist. Moreover, it was shown in \cite{lewis2002solvable5} that six is the minimum number of vertices in any occurring graph of diameter three. As such, we can expect to encounter graphs of diameter three in our investigation. The following notation, introduced by Lewis in \cite{lewis2002solvable5} gives rise to a result (Theorems 2,3, and 4 of \cite{sass2016character}) which will help tame them substantially.\vspace{5mm}

Suppose $\Gamma$ is a graph of diameter three, where $p$ and $q$ are distance three from one another. One can partition the vertex set of $\Gamma$ into four non-empty disjoint subsets: $\textup{V}(\Gamma)=\rho_1\cup\rho_2\cup\rho_3\cup\rho_4$ where the $\rho_i$ are defined as follows:
\begin{enumerate}
    \item[] $\rho_4$ is the set of all vertices which are distance three from $p$ (i.e. $q\in\rho_4$);
    \item[] $\rho_3$ is the set of all vertices which are distance two from $p$;
    \item[] $\rho_2$ is the set of all vertices which are adjacent to $p$ and some vertex in $\rho_3$;
    \item[] $\rho_1$ is the set containing $p$ and all vertices which are adjacent to $p$ but not to any vertex in $\rho_3$.
\end{enumerate}

This partition depends on the choices of $p$ and $q$. Following \cite{sass2016character}, we relabel if needed so that $|\rho_1\cup\rho_2|\leq |\rho_3\cup\rho_4|$. Two important results which help to significantly tame graphs of diameter three are as follows:

\begin{lemma}[Theorems 2 and 4 of \cite{sass2016character}] Let $G$ be a solvable group where $\Delta(G)$ has diameter three. One then has the following:
    \begin{enumerate}
        \item[i.] $|\rho_3|\geq 3$;
        \item[ii.] If $|\rho_1\cup\rho_2|=n$, then $|\rho_3\cup\rho_4|\geq 2^n$;
    \end{enumerate}
\end{lemma}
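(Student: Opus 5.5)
This lemma is quoted from Sass (Theorems 2 and 4 of \cite{sass2016character}), so in the paper it is used as a black box. Still, here is how I would reconstruct the argument. The starting point is the group-theoretic structure theorem for solvable $G$ with $\Delta(G)$ of diameter three, due to Lewis and refined by Sass. Such a $G$ has a normal nonabelian Sylow $p_0$-subgroup $P$ for a unique prime $p_0\in\rho_3$. The quotient $G/P'$ has a specific shape: $F(G/P')$ contains $P/P'$, and $G/F$ acts on $P/P'$, viewed as a module over a finite field of characteristic $p_0$.

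I would first show that $\rho_1\cup\rho_2$ is exactly the set of primes dividing $|G:F|$ (or the relevant Galois part). I would then show that $\rho_3\cup\rho_4$ is controlled by primes dividing $p_0^a-1$ for a suitable $a$, where $|P:P'|=p_0^{a}$ or a closely related quantity.

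For part (i), I would locate the primes of $\rho_3$ inside the character degrees of the semilinear group $\Gamma(p_0^a)$ acting on $P/P'$. The key steps are these. First, $p_0\in\rho_3$. Second, every vertex of $\rho_3$ is adjacent to something in $\rho_2$ and to something in $\rho_4$. Third, P\'alfy's condition (Theorem 2.1) applied to triples $\{p,x,q\}$ with $x\in\rho_3$ forces restrictions. I would then argue by contradiction. If $|\rho_3|\le 2$, the degrees involving $\rho_4$ must come from characters lying over nonlinear characters of $P$. Their $p_0$-parts, together with the structure of the field automorphisms, force at least two further primes into $\rho_3$. These are Zsigmondy prime divisors of $p_0^{b}-1$ for distinct divisors $b$ of $a$, which cannot all be merged.

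For part (ii), set $n=|\rho_1\cup\rho_2|$. I would show that the $n$ primes of $\rho_1\cup\rho_2$ all divide $a$, the degree of the field extension, so that $a\ge \prod$ of these primes $\ge 2^{n}$ in a suitable sense. Then, for each divisor $d$ of $a$ among the $2^n$ products of subsets of these primes, Zsigmondy's theorem gives a primitive prime divisor of $p_0^{d}-1$. Next, each such prime must be a character degree prime lying in $\rho_3\cup\rho_4$, since primes dividing $|F:P|$ arise as degrees over $P$. Distinct $d$ give distinct Zsigmondy primes, which yields the bound $|\rho_3\cup\rho_4|\ge 2^{n}$. The Zsigmondy exceptions ($p_0^d=2^6$, or $d=2$ with $p_0$ Mersenne) would need separate handling, replacing the missing prime with $p_0$ itself.

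The main obstacle is the first step: establishing the structure theorem identifying $P$, $F$, and the semilinear action, and then proving that every Zsigmondy prime obtained genuinely appears as a vertex in $\rho_3\cup\rho_4$ rather than being absorbed into $\rho_1\cup\rho_2$. This is the genuinely deep content of Sass's work. In practice I would cite it rather than reprove it.
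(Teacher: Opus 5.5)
Citing is exactly what the paper does: it gives no proof of this lemma, quotes it from Sass and uses it as a black box. Your reconstruction, though, would not work as written. The paper's own group $G_{13}$ shows where it breaks. There $p_0=23$, and $F(G)=P$ because $C\mathfrak{G}$ acts faithfully on $P$. Put $E:=PC$; then $E/F(G)=F(G/F(G))\cong C$, and $G/E\cong\mathfrak{G}$ has order $39$. Taking $13$ as the base vertex, at distance three from $s$, gives $\rho_1\cup\rho_2=\{3,13\}$, $\rho_3=\{23,u,v,w\}$ and $\rho_4=\{s,t\}$.

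\textbf{Subgroup bookkeeping.} $\rho_1\cup\rho_2=\pi(|G:E|)$, not $\pi(|G:F(G)|)=\{3,13,s,t,u,v,w\}$. The primes of $(\rho_3\cup\rho_4)\setminus\{p_0\}$ divide $|E:F(G)|=|C|=stuvw$, not $|F:P|$, which is $1$ here. They are vertices because $stuvw$ is itself a degree of $G/P'$.

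\textbf{The count in (ii).} Your $2^n$ divisors include the empty product $d=1$. A prime divisor of $p_0-1$ need not be a vertex: $p_0-1=22$, yet $2,11\notin\rho(G_{13})$, and indeed $\gcd(|C|,p_0-1)=\gcd(39,22)=1$. For $G_{36}$ one even has $p_0-1=1$. The correct count is $2^n-1$ primes of $|E:F(G)|$, one for each \emph{nonempty} subset of $\rho_1\cup\rho_2$ (here $d=3,13,39$), plus the vertex $p_0$ itself. Note that $2^n-1$ is exactly the bound P\'alfy's inequality (Theorem 2.4) gives for the disconnected graph $\Delta(G_{13}/P')$, whose components are $\{3,13\}$ and $\{s,t,u,v,w\}$.

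\textbf{Part (i).} Your step ``the degrees involving $\rho_4$ must come from characters over nonlinear characters of $P$'' is not available. In $G_{13}$ the degree $stuvw$ involves $\rho_4$ yet has $P'$ in its kernel. What happens is the reverse. Since $\mathrm{cd}(G_{13}/P')=\{1,3,13,39,stuvw\}$, every edge between $\rho_2$ and $\rho_3$ is realized only by characters in $\mathrm{Irr}(G\mid P')$, whose degrees are divisible by $p_0$; that is what puts $p_0$ into $\rho_3$. The other vertices $u,v,w$ of $\rho_3$ are the primes of $\Phi_{13}(23)\Phi_{39}(23)$, the Zsigmondy primes for the divisors $13$ and $39$, while $\rho_4=\{s,t\}$ comes from $\Phi_3(23)$. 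Your sketch never says which divisors $b$ feed $\rho_3$ rather than $\rho_4$, nor why at least two of them must. So (i) is not actually argued.

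\textbf{The core step.} You flag this yourself: why every such Zsigmondy prime must divide $|E:F(G)|$ and so be a vertex. That is the real content of Sass's theorems. Without it your sketch is a heuristic consistent with the known examples, not a proof, so citing is the right call.
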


The construction of Lewis in \cite{lewis2002solvable}, which lead to a graph of diameter three, was generalized by Dugan in \cite{dugan2007solvable}. This construction is quite helpful in showing the occurrence of certain graphs which have diameter three and are covered by cliques of size $n-2$ and $2$. We will outline this construction in greater detail when using it in section 5.2.

\subsection{Admissible Vertices and Families of Graphs}
Another useful tool for showing a graph does not occur is that of admissible vertices, which were introduced in \cite{bissler2025family}. 

\begin{definition}[from \cite{bissler2025family}]
    Let $\Gamma$ be a graph and $p\in\textup{V}(\Gamma)$ be a vertex of $\Gamma$. Consider the following three conditions:
    \begin{enumerate}
        \item[i.] The subgraph of $\Gamma$ obtained by removing $p$ and all incident edges does not occur;
        \item[ii.] All subgraphs of $\Gamma$ obtained by removing one or more edges incident to $p$ do not occur;
        \item[iii.] All subgraphs of $\Gamma$ obtained by removing $p$, the edges incident to $p$, and one or more edges between two adjacent vertices of $p$ do not occur.   
    \end{enumerate}

    If the vertex $p$ satisfies i. and ii. then it is called \textbf{admissible}. If $p$ also satisfies iii. then it is called \textbf{strongly admissible}.
\end{definition}

Much can be said about both $G$ and $\Delta(G)$ once admissible or strongly admissible vertices have been identified. The main result which we will make use of is the following:

\begin{lemma}[Lemma 2.3 of \cite{bissler2025family}]
    Assume that $\Gamma$ is a non-empty graph. If $\Gamma$ is a graph in which every vertex is admissible, then $\Gamma$ is not $\Delta(G)$ for any solvable group $G$.
\end{lemma}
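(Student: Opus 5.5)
The plan is a minimal counterexample argument. Suppose the lemma fails, and among all pairs $(\Gamma, G)$ where $\Gamma=\Delta(G)$ is a non-empty graph with every vertex admissible and $G$ is solvable, choose one with $|G|$ minimal. The goal is to show that some vertex $p$ of $\Gamma$ must fail condition i.\ or ii.\ of the definition, which contradicts admissibility.

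First I reduce to a situation where all proper quotients lose information. For any normal subgroup $N$ with $N>1$, the set $\textup{cd}(G/N)$ is contained in $\textup{cd}(G)$. So $\Delta(G/N)$ is a subgraph of $\Gamma$ on the vertex set $\rho(G/N)\subseteq\rho(G)$, with some edges possibly missing. I would aim to show that $\Delta(G/N)$ is either $\Gamma$ itself or a graph forbidden by admissibility.

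\begin{itemize}
\item If $\Delta(G/N)=\Gamma$, then $G/N$ is a smaller counterexample, contradicting minimality. So every proper quotient has a strictly smaller graph.
\item If $\rho(G/N)=\rho(G)\setminus\{p\}$ for a single vertex $p$, and the induced edges are preserved, then $\Gamma-p$ occurs. This violates condition i.
\item If $\rho(G/N)=\rho(G)$ but only edges at a single vertex $p$ are lost, then condition ii.\ is violated.
\end{itemize}

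The work is therefore to take $N$ minimal normal, hence an elementary abelian $r$-group, and control how $\Delta(G/N)$ differs from $\Delta(G)$. I would use Clifford theory with Gallagher and Ito-type arguments. Characters of $G$ not containing $N$ in their kernel lie over nontrivial $\lambda\in\textup{Irr}(N)$, and their degrees are $|G:I_G(\lambda)|$ times degrees of characters of the inertia group. I would split into two cases, according to whether $N$ is central or not.

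\textbf{Case 1: $N\le Z(G)$, or more generally $r$ is the only prime affected.} Passing to $G/N$ should change edges only at the vertex $r$, or remove $r$ altogether. Comparing with a second minimal normal subgroup, or using $G/G'$-type reductions, should then give a contradiction with condition i.\ or ii.\ at $r$.

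\textbf{Case 2: non-central $N$.} Here I would exploit the structure of the Fitting subgroup. One option is to argue that $G$ has a unique minimal normal subgroup, by taking $N_1\cap N_2=1$ and embedding $G$ into $G/N_1\times G/N_2$. Then $\Delta(G)$ is controlled by the two quotient graphs, each missing only what admissibility forbids.

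I expect this last step to be the main obstacle. The difficulty is showing that the graph of a proper quotient differs from $\Gamma$ only by deleting a single vertex or by removing edges incident to a single vertex. A quotient could a priori lose several vertices or scattered edges at once, and admissibility says nothing about such subgraphs.

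To handle it, I would first try to pick $N$ so that $G/N$ loses as little as possible. For instance, take $N$ to be the last term of the derived series, or use the fact that $\rho(G)=\rho(G/N)\cup\pi(|G:I_G(\lambda)|)\cup\cdots$ to isolate the single prime $r=|N|$'s contribution. Failing that, I would look to the original argument in \cite{bissler2025family}, which I expect handles precisely this localisation of changes to one vertex.
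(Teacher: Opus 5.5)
There is a genuine gap, and it is the step you flag as the main obstacle. Nothing forces the graph of a proper quotient $G/N$ to differ from $\Gamma$ only by deleting one vertex, or some edges at one vertex. For a minimal normal subgroup $N$ this is false in general. In a minimal counterexample, the only information you have about $G/N$ is that $\Delta(G/N)$ is a \emph{proper} subgraph of $\Gamma$, and admissibility says nothing about most proper subgraphs.

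Concretely, take $G_1=\mathbb{F}_{2^{491}}\rtimes\mathbb{F}^{\times}_{2^{491}}\rtimes\textup{Gal}(\mathbb{F}_{2^{491}}/\mathbb{F}_2)$ from Section 4. The additive group $\mathbb{F}_{2^{491}}$ is its unique minimal normal subgroup. Yet $\textup{cd}(G_1/\mathbb{F}_{2^{491}})=\{1,491\}$, so seven of the eight vertices disappear at once. None of your Case 1/Case 2 ingredients supplies a mechanism that rules out this kind of wholesale loss: the central versus non-central split, the subdirect embedding into $G/N_1\times G/N_2$, or Fitting or derived-series choices of $N$. So as written the argument never reaches a contradiction.

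The missing idea is to pass to a normal \emph{subgroup} of prime index rather than a quotient. Your minimality is over all solvable groups with graph $\Gamma$, so subgroups are fair game. (The paper itself gives no proof; it quotes the lemma from \cite{bissler2025family}.)

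Since $G\neq 1$ is solvable, $G'<G$, so there is $M\trianglelefteq G$ with $|G:M|=p$ prime. Let $\chi\in\textup{Irr}(G)$ lie over $\theta\in\textup{Irr}(M)$. Because $G/M$ is cyclic of order $p$, either $\theta$ extends or $\theta^G$ is irreducible, so $\chi(1)/\theta(1)\in\{1,p\}$. Consequently:
\begin{itemize}
\item $\rho(M)\setminus\{p\}=\rho(G)\setminus\{p\}$;
\item an edge $qr$ with $q,r\neq p$ lies in $\Delta(M)$ if and only if it lies in $\Gamma$;
\item edges at $p$ can only be lost.
\end{itemize}
Thus $\Delta(M)$ is either $\Gamma$, or $\Gamma$ with $p$ deleted, or $\Gamma$ with some edges at $p$ removed.

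If $p\notin\rho(G)$, only the first option is possible, which contradicts minimality of $|G|$. If $p\in\rho(G)$, the three options contradict, respectively, minimality, condition i., and condition ii. at the admissible vertex $p$. Hence $G$ has no normal subgroup of prime index, so $G=G'$. Solvability then forces $G=1$, so $\Gamma$ is empty, contrary to hypothesis. No Clifford-theoretic analysis of minimal normal subgroups is needed.
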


Additionally, there have been several families of graphs with specific constructions which have been investigated; often utilizing admissible vertices to show that their members cannot occur.\vspace{5mm}

The first such family, denoted $\Gamma_{k,t}$ (where $k\geq t\geq 1$), is constructed by taking the complete graphs $K_t$ and $K_k$ and mapping edges injectively from $K_t$ to $K_k$. The result is a graph of diameter at most two which satisfies P\'alfy's condition. The findings for this family are summarized in the following result.

\begin{theorem}[Main Theorem of \cite{bissler2019classifyingfamilies}] The graph $\Gamma_{k,t}$ occurs as the prime character degree graph of a solvable group precisely when $t=1$ or $k=t=2$. Otherwise $\Gamma_{k,t}$ does not occur as the prime character degree graph of any solvable group.
\end{theorem}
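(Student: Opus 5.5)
The plan splits into an existence part (the cases $t=1$ and $k=t=2$) and a non-existence part (all other $k\geq t\geq 2$), with the non-existence part done by induction on $k+t$ via Lemma 2.7. I read the construction so that each vertex $a_i$ of $K_t$ is joined to a distinct vertex $b_i$ of $K_k$, for $1\leq i\leq t$.

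\emph{Occurrence.} For $t=1$, $\Gamma_{k,1}$ is $K_k$ with a pendant vertex $a_1$ hanging off the cut vertex $b_1$. This is the standard shape of graphs with one cut vertex, which the paper allows (Lewis and Meng show an occurring graph has at most one cut vertex). I would realize it with a group $H$ such that $\Delta(H)$ is the disconnected graph $\{a_1\}\cup\{b_1\}$, and then adjoin the primes $b_2,\dots,b_k$ through a direct product with suitable groups. For $k=t=2$, $\Gamma_{2,2}$ is the path on four vertices, which is on Huppert's list of occurring graphs with four vertices; I would simply cite that.

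\emph{Non-occurrence.} For $k\geq t\geq 2$ with $(k,t)\neq(2,2)$, I would argue by induction on $k+t$ that every vertex of $\Gamma_{k,t}$ is admissible, and then apply Lemma 2.7. Condition ii (deleting edges at a vertex) is handled by P\'alfy's condition and Theorem 2.2.
\begin{itemize}
\item If the edge $a_ib_i$ is deleted, the triple $a_i,b_i,b_j$ with $j\neq i$ spans only the edge $b_ib_j$; together with the remaining non-edges this produces a triangle or odd cycle in the complement. Otherwise $a_i$ becomes a vertex adjacent only to $K_t$, and I would rule that out separately, e.g.\ by the cut-vertex result or by Theorem 2.4.
\item If an edge $a_ia_j$ is deleted, then $a_i,a_j,b_l$ with $l\notin\{i,j\}$ (available because $k\geq 3$ or $t\geq 3$) span no edge, contradicting Theorem 2.1.
\item If an edge $b_ib_j$ is deleted, then $b_i,b_j,a_l$ with $l\neq i,j$ is independent when $t\geq 3$, and the odd cycle $b_i a_j b_l a_i$ is used otherwise.
\end{itemize}
Condition i: deleting $a_i$ leaves $\Gamma_{k,t-1}$, which does not occur by induction unless it is one of the exceptional occurring graphs. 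Deleting $b_i$ with $i\leq t$ leaves $K_{k-1}$ and $K_t$ joined by a matching of size $t-1$; deleting an unmatched $b_i$ gives $\Gamma_{k-1,t}$.

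\emph{Main obstacle.} I expect the difficulty to lie in these vertex deletions. The graph left after removing a matched $b_i$ is not of the form $\Gamma_{k',t'}$, and deleting $a_i$ can land on an occurring graph, namely $\Gamma_{k,1}$ (when $t=2$) or $\Gamma_{2,2}$ (when $k=t=3$). So the induction must be strengthened to a larger family, allowing matchings of size less than $t$ with vertices of $K_t$ having no outside neighbor. The small cases $t=2$ also need direct treatment. There I would use the structure forced by a cut vertex or diameter three (Lemma 2.5 and Theorem 2.4), or refine the argument to strongly admissible vertices, rather than Lemma 2.7 alone.
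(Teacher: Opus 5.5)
The paper does not prove this statement; it only quotes it from \cite{bissler2019classifyingfamilies}. Taken on its own, your proposal has errors in the occurrence half and a genuine gap in the non-occurrence half.

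\emph{Occurrence.} Your product for $t=1$ builds the wrong graph. If one factor has $\rho(H)=\{a_1,b_1\}$ with no edge and the other factor carries $b_2,\dots,b_k$, the direct product joins $a_1$ to every $b_j$ with $j\ge2$ and leaves $a_1b_1$ a non-edge. The result is $K_{k+1}$ minus an edge, which has no vertex of degree one once $k\ge3$. In a direct product, a vertex of degree one forces the other factor to contribute exactly its neighbor. So the right decomposition is $\Gamma_{k,1}\cong(K_1\sqcup K_{k-1})\times K_1$:
take $H$ whose graph has components $\{a_1\}$ and $\{b_2,\dots,b_k\}$, and multiply by a nonabelian $b_1$-group. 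A suitable $H$ is the Heisenberg group over $\mathbb{F}_p$ extended by a cyclic group of order $m\mid p-1$, where $m$ is a product of $k-1$ distinct primes, acting by $(x,y,z)\mapsto(cx,c^{-1}y,z)$; this group has $\textup{cd}=\{1,m,p\}$.

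For $k=t=2$, your own reading gives $\Gamma_{2,2}$ the edges $a_1a_2,b_1b_2,a_1b_1,a_2b_2$, so it is the $4$-cycle, not the path. The path on four vertices does \emph{not} occur: it has diameter three and two cut vertices. So the appeal to Huppert is misplaced. The $4$-cycle does occur, for example as $\Delta\bigl(S_4\times(\mathbb{F}_{32}\rtimes\mathbb{F}_{32}^{\times}\rtimes\textup{Gal}(\mathbb{F}_{32}/\mathbb{F}_2))\bigr)$, whose factors have degree sets $\{1,2,3\}$ and $\{1,5,31\}$.

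\emph{Non-occurrence.} Your first bullet is false. Since $\Gamma_{k,t}$ is covered by the cliques $\{a_j\}$ and $\{b_j\}$, every non-edge joins an $a$ to a $b$. So the complement is bipartite and stays bipartite after deleting $a_ib_i$; no triangle or odd cycle appears.

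The fallback does not apply either. For $t\ge3$, the graph $\Gamma_{k,t}-a_ib_i$ is connected with no cut vertex. It has diameter three, since $d(a_i,b_i)=3$. With $p=a_i$ one gets $|\rho_1\cup\rho_2|=t$, $|\rho_3|=t-1$ and $|\rho_3\cup\rho_4|=k$. Deleting a matched $b_i$ when $k>t$ gives the same picture with $k-1$ in place of $k$. Lemma 2.5 therefore rules these graphs out only when $t=3$ or $|\rho_3\cup\rho_4|<2^t$.

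Consequently, even with the base case supplied, your admissibility induction can be completed with the tools you cite only for $t\le3$ or $k\le15$. For $t\ge4$ and $k\ge16$ the induction must pass through $\Gamma_{k,4}$. There $\Gamma_{k,4}-a_ib_i$, with $|\rho_3|=3$ and $|\rho_3\cup\rho_4|=k\ge2^4$, is not excluded. So condition ii is never verified at a matched vertex, and Lemma 2.7 cannot be invoked. ``Strengthening the induction to a larger family'' does not fix this. These are diameter-three graphs, and excluding them needs structural information beyond the counting in Lemma 2.5; that is the missing idea.

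For the base case $t=2$, the vertices $a_i$ are genuinely not admissible, because deleting one leaves $\Gamma_{k,1}$. The tools you name do not apply: for $k\ge3$, $\Gamma_{k,2}$ is connected, has diameter two and has no cut vertex. What does dispose of it is Theorem 2.11. The vertices $a_1,a_2$ have degree two, are adjacent and share no neighbor, and $\Gamma_{k,2}$ has $k+2\ge5$ vertices and satisfies P\'alfy's condition. The paper makes this same remark for $B_4=\Gamma_{6,2}$.

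Two minor slips. Deleting a vertex of $\Gamma_{3,3}$ leaves $\Gamma_{3,2}$, not $\Gamma_{2,2}$. For $t=2$, the complement cycle after deleting $b_ib_j$ is the $5$-cycle $b_i,a_j,b_l,a_i,b_j$; your four-term cycle is even and uses the edge $a_ib_i$.
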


This family was generalized further by Laubaucher and Medwid in \cite{laubacher2021prime} and DeGroot, Laubacher and Medwid in \cite{degroot2022prime}. The result of these investigations are two similar families denoted $\Sigma_{k,n}^L$ and $\Sigma_{k,n}^R$ where edges are mapped between the complete graphs $K_k$ and $K_{k+n}$ in a one-to-two fashion and the $L$ or $R$ superscript denotes which complete graph interacts with a ``special vertex''. The findings for these families are shown in the next two results.

\begin{theorem}[Theorem 1.1 of \cite{laubacher2021prime}] The graph $\Sigma_{k,n}^L$ occurs as the prime character degree graph of a solvable group only when $(k,n)=(1,1)$.
\end{theorem}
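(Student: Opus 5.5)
The proof splits into two parts: showing that $\Sigma_{1,1}^L$ occurs, and showing that $\Sigma_{k,n}^L$ fails to occur for every other admissible pair $(k,n)$.

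For occurrence, $\Sigma_{1,1}^L$ has only a handful of vertices: one vertex of $K_1$, the two vertices of $K_2$, and the special vertex. So it falls inside the range already settled by Huppert's classification of graphs with at most four vertices. I would identify it explicitly, for example as a path or paw-type graph. Then I would exhibit a group realizing it, either from Huppert's list or as $\Delta(A\times B)$ for suitable solvable $A,B$ with disjoint $\rho$. Here I use that $\Delta(A\times B)$ is the join of $\Delta(A)$ and $\Delta(B)$ once we add the edges coming from products of degrees. Any non-join part of the graph would be realized by a Frobenius-type group whose degrees pick out the appropriate primes.

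For non-occurrence, I would proceed by induction on $k+n$ and aim to apply Lemma 2.6: show that every vertex of $\Sigma_{k,n}^L$ is admissible for $(k,n)\neq(1,1)$. This requires two checks for each vertex $p$.

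\begin{enumerate}
\item[i.] $\Sigma_{k,n}^L - p$ does not occur. I would split by the type of $p$:
\begin{itemize}
\item If $p$ lies in $K_{k+n}$ or in $K_k$, deleting it should produce a smaller member $\Sigma_{k',n'}^L$ (or $\Sigma_{k',n'}^R$), or a graph in the family $\Gamma_{k,t}$ with $t\geq 2$ and $(k,t)\neq(2,2)$. In the first case the induction hypothesis applies; in the second, Theorem 2.7 applies.
\item If $p$ is the special vertex, the remaining graph is essentially $K_k$ and $K_{k+n}$ joined in the one-to-two fashion. I would check directly that this is excluded, by Theorem 2.7, by Pálfy's condition, or by the cut-vertex result of Lewis and Meng.
\end{itemize}

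\item[ii.] Every graph obtained by deleting a nonempty set of edges at $p$ fails to occur. Here the main tool is Pálfy's condition and Theorem 2.2. Deleting an edge between $p$ and a vertex of the other clique typically creates an odd cycle in the complement, because the one-to-two edge pattern leaves few edges to spare. Deleting edges inside a clique creates an independent triple.
\end{enumerate}

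The step I expect to be the main obstacle is ii when only a few edges are removed and the complement stays bipartite. In that case neither Pálfy-type condition applies. I would first try to show that the resulting graph has diameter three, or a second cut vertex, or two components with sizes violating Pálfy's inequality (Theorem 2.4), and then invoke Lemma 2.5 or Theorem 2.4. If that fails, I would fall back on showing that the resulting graph again belongs to the $\Gamma_{k,t}$ family or to a smaller $\Sigma$-family, and apply Theorem 2.7 or induction.

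The base cases with small $k+n$, namely $(1,2)$, $(2,1)$ and $(2,2)$, would be handled directly. For these I would check the graphs against the known five-, six- and seven-vertex classifications, whose graphs have been fully classified. Once every vertex is admissible, Lemma 2.6 finishes the argument.
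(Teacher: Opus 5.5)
The paper gives no proof of this statement; it is quoted from Laubacher and Medwid. Judged on its own, your outline breaks at its central step. Write $A=\{a_1,\dots,a_k\}$ for the clique $K_k$, $B$ for $K_{k+n}$, and $s$ for the special vertex. Then $s$ is adjacent to all of $A$ and to nothing in $B$. Each vertex of $B$ has exactly one neighbour in $A$, and each $a_i$ has one or two neighbours in $B$, so $n\le k$.

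\textbf{The special vertex is not admissible in general.} Admissibility of $s$ requires that $\Sigma^L_{k,n}-s=\Sigma_{k,n}$ does not occur, and for $k=2$ this is false. Take $(k,n)=(2,1)$ with $a_1\sim b_1,b_2$ and $a_2\sim b_3$. The complement of $\Sigma_{2,1}$ has components $\{a_1,b_3\}$ and $\{a_2,b_1,b_2\}$. Hence $\Sigma_{2,1}$ is the direct product of the empty graph of order two with $K_1\cup K_2$. Likewise $\Sigma_{2,2}\cong(K_1\cup K_2)\times(K_1\cup K_2)$. Both factors are disconnected graphs satisfying P\'alfy's inequality and occur via the Galois-field construction, so these products occur. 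For $k\ge 3$ none of your listed tools reaches $\Sigma_{k,n}$:
\begin{itemize}
\item its complement is bipartite with parts $A$ and $B$, so Theorems 2.1 and 2.2 are satisfied;
\item it has diameter two and no cut vertex;
\item it is not a $\Gamma_{k',t'}$, since some vertex of $A$ has two neighbours in $B$.
\end{itemize}

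\textbf{Some vertices of $B$ fail too.} Suppose $n<k$ and $b$ is the only $B$-neighbour of some $a_i$. Then $\Sigma^L_{k,n}-b$ has two adjacent vertices $s,a_i$ with no neighbours in $B\setminus\{b\}$. This is the two-special-vertex type that the paper says has not been studied in general. It lies outside $\Sigma^L$, $\Sigma^R$ and $\Gamma_{k,t}$, so your induction says nothing about it. For $(k,n)=(2,1)$ this graph is the bowtie, which occurs.

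Consequently Lemma 2.7 (your ``Lemma 2.6'') cannot be applied to $\Sigma^L_{2,1}$ or $\Sigma^L_{2,2}$. For general $(k,n)$, the non-occurrence of $\Sigma_{k,n}$ and of these two-special-vertex graphs is exactly the input you are missing. You need an argument that does not require every vertex to be admissible, for instance a group-theoretic one.

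\textbf{Parts of your plan that do work.} The rest of your plan is largely salvageable, but some claims need correcting.
\begin{itemize}
\item In (ii), deleting an edge $a_ib$ can never create an odd cycle in the complement, because the complement stays bipartite with parts $\{s\}\cup A$ and $B$. These cases are handled by Lemma 2.5 instead. The vertex $b$ loses its only neighbour in $\{s\}\cup A$, so $d(s,b)=3$. Then $\rho_1\cup\rho_2=\{s\}\cup A$ has size $k+1$, while $\rho_3\cup\rho_4=B$ has size $k+n\le 2k<2^{k+1}$.
\item The same argument disposes of deleting a vertex of $A$ in (i). There $|\rho_1\cup\rho_2|=k$, and in the case $(2,2)$ one uses $|\rho_3|=2$ instead.
\item Deleting an edge inside a clique violates P\'alfy's condition, with one exception: $a_1a_2$ when $k=2$. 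That case needs the $5$-cycle $a_1,a_2,b,s,b',a_1$ in the complement, where $b\in N(a_1)$ and $b'\in N(a_2)$.
\item $\Sigma^L_{1,1}$ is exactly the paw $\Gamma_{3,1}$, so its occurrence follows at once from Theorem 2.8.
\item The five-, six- and seven-vertex classifications are not complete. You must check that your base graphs are among the settled ones. Appealing to the seven-vertex classification for $(2,2)$ is circular if $C_{10}=\Sigma^L_{2,2}$ was eliminated there by citing this very theorem.
\end{itemize}
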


\begin{theorem}[Main Theorem of \cite{degroot2022prime}] The graph $\Sigma_{k,n}^R$ occurs as the prime character degree graph of a solvable group when $(k,n)=(1,1)$, and possibly when $(k,n)\in\{(2,1),(2,2)\}$; otherwise, $\Sigma_{k,n}^R$ does not occur as the prime character degree graph of any solvable group.
\end{theorem}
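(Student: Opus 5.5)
The plan is to treat the two halves of the statement separately: an explicit construction for the occurring case, and the admissible-vertex machinery for everything else. For $(k,n)=(1,1)$ the graph $\Sigma^R_{1,1}$ has only $k+(k+n)+1=4$ vertices, counting the special vertex. So I would first identify it concretely among Huppert's list of occurring graphs with at most four vertices \cite{huppert1991research}. If it is not immediately recognizable there, I would realize it as $\Delta(G_1\times G_2)$: recall that $\Delta(G_1\times G_2)$ is the join of $\Delta(G_1)$ and $\Delta(G_2)$ whenever $\rho(G_1)\cap\rho(G_2)=\emptyset$, so this works if the graph splits as such a join of two occurring graphs. Failing that, I would fall back on a semidirect product of a suitable Frobenius-type group, chosen so that the degrees realize exactly the one-to-two edge pattern together with the edges at the special vertex.

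For non-occurrence I would argue by induction on $k+n$ and apply Lemma 2.3 of \cite{bissler2025family}, showing that every vertex of $\Sigma^R_{k,n}$ is admissible in the remaining cases. The cases to cover are $k=1$ with $n\ge 2$, $k=2$ with $n\ge 3$, and all $k\ge 3$. Vertices come in three types: vertices of $K_k$, vertices of $K_{k+n}$, and the special vertex. For each type I would check the two admissibility conditions as follows.

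\begin{enumerate}
\item[i.] Deleting a vertex should produce either a smaller $\Sigma^R_{k',n'}$, which fails to occur by induction, or a graph of type $\Sigma^L$, which fails by Theorem 1.1 of \cite{laubacher2021prime}. Alternatively the result may be a graph $\Gamma_{k,t}$ with $t\ge 2$ and $(k,t)\neq(2,2)$, which fails by the Main Theorem of \cite{bissler2019classifyingfamilies}, or a graph with two cut vertices, excluded by \cite{lewiscutvertex}.
\item[ii.] Deleting one or more edges at a vertex $p$ should produce a graph that violates a known obstruction. Since the cross edges form a near-matching, removing cross edges tends to create an odd cycle in the complement, contradicting Theorem 2.2. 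Otherwise it yields a disconnected graph violating P\'alfy's inequality (Theorem 2.4), or a diameter-three graph violating Lemma 2.5, since $|\rho_3|\ge 3$ and the $2^n$ bound are hard to meet with so few cross edges.
\end{enumerate}

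The base cases of the induction would be checked directly against the classifications of graphs with at most seven vertices \cite{lewis2004classifying,bissler2019classifying,laubacher2023classifying}.

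I expect the main obstacle to be the special vertex, together with the smallest cases. Removing an edge at the special vertex can leave a graph covered by two cliques, with no odd complement cycle and diameter two, so none of the general obstructions applies. This is presumably exactly why $(2,1)$ and $(2,2)$ remain open. For these edge deletions I would first try to show that the resulting graph is some $\Gamma_{k',t'}$ with $t'\ge2$, or has every vertex admissible by a nested application of the same lemma. Where that fails for large $k$, I would use the group-theoretic structure theory behind \cite{bissler2025family}, namely properties of $G$ forced by a strongly admissible vertex, such as the Fitting subgroup structure and the existence of a normal nonabelian Sylow subgroup. The goal there would be to derive a contradiction with the degree set required by the one-to-two edges.
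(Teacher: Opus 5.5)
The paper does not prove this statement; it quotes it from \cite{degroot2022prime}. The only hint about that proof is the remark after the lemma on $D_{19}$, which says graphs with \emph{several} special vertices were used there. Your $(1,1)$ case is fine: $\Sigma^R_{1,1}$ is $K_4$ minus an edge, the join of $\overline{K_2}$ and $K_2$, hence a direct product.

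The non-occurrence half has a genuine gap: your induction does not close. In $\Sigma^R_{k,n}$ the vertices split into the cliques $K_k$ and $K_{k+n}\cup\{s\}$. Here $n$ vertices of $K_k$ have two cross neighbours and the rest have one, so every vertex of $K_{k+n}$ has exactly one neighbour in $K_k$ and $s$ has none. (As I read the one-to-two pattern, this also forces $n\le k$, so your cases $k=1,n\ge 2$ and $k=2,n\ge 3$ do not arise.) Deleting a vertex gives an induced subgraph, so it cannot violate P\'alfy's condition or Theorem 2.2. The result generally lies outside every family you list:
\begin{enumerate}
\item[(a)] deleting a vertex of $K_k$ with one cross neighbour turns that neighbour into a second special vertex on the right;
\item[(b)] deleting a vertex of $K_{k+n}$ whose $K_k$-neighbour has no other cross edge creates a special vertex on the left, while $s$ stays on the right;
\item[(c)] deleting $s$ leaves the bare one-to-two graph.
\end{enumerate}
Only some deletions land back in $\Sigma^R$ or in a $\Gamma_{k,t}$; for example, deleting the vertex of $K_k$ with two cross neighbours when $n=1$ gives $\Gamma_{k+2,k-1}$. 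Some of the other graphs actually occur. For $(k,n)=(2,1)$ the bare graph is the join of $\overline{K_2}$ with $K_2\sqcup K_1$, and for $(2,2)$ it is the join of $K_2\sqcup K_1$ with itself. Both are direct products, so $s$ is not even admissible there. The missing idea is to run the argument over the larger family with several special vertices (on either side), and to give real group-theoretic arguments where admissibility fails. Your last paragraph only names such arguments.

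Your treatment of condition ii.\ is also wrong, and it puts the obstacle at the wrong vertex. Since $\Sigma^R_{k,n}$ is covered by two disjoint cliques, its complement is bipartite. Deleting cross edges only adds complement edges between the two parts, so it can never create an odd complement cycle or an independent triple. Deleting one of the two cross edges at such a vertex of $K_k$ gives a diameter-two graph, still covered by the same two cliques, with two special vertices on the right. That is where the general obstructions fail. (If a vertex of $K_k$ loses all its cross edges, the diameter becomes three, and Lemma 2.5 does rule this out when $k\ge 3$.)

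By contrast, the edges at $s$ lie inside the clique $K_{k+n}\cup\{s\}$. If $sq$ is deleted and $p\in K_k$ is not the unique $K_k$-neighbour of $q$ (such $p$ exists once $k\ge 2$), then $\{s,q,p\}$ is independent, so $s$ satisfies ii.\ trivially.

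Finally, $\Sigma^R_{2,1}$ and $\Sigma^R_{2,2}$ are exactly the graphs whose status is still open. So they cannot be base cases ``checked against'' the $6$- and $7$-vertex classifications, and you would have to make sure the argument for $k\ge 3$ never depends on them.
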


Additionally, in \cite{bissler2025family}, Bissler, Laubacher and Lewis ruled out a family of graphs whose vertices satisfy certain adjacency conditions.

\begin{theorem}[Main Theorem of \cite{bissler2025family}]
    Let $\Gamma$ be a graph satisfying P\'alfy's condition, with $k\geq 5$ vertices. Assume that there exist two vertices $p_1$ and $p_2$ in $\Gamma$, such that $p_1$ and $p_2$ are of degree two, $p_1$ is adjacent to $p_2$, and they share no common neighbor. Then $\Gamma$ is not the prime character degree graph of any solvable group.
\end{theorem}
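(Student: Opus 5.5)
The plan is to first pin down the shape of $\Gamma$ completely using P\'alfy's condition and Theorem 2.2, and then argue by induction on $k$. The induction uses Lemma 2.7 on admissible vertices where possible, and a minimal-counterexample group-theoretic argument where admissibility is not available.

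\textbf{Step 1: the structure of $\Gamma$.} Write $q_1$ for the second neighbor of $p_1$ and $q_2$ for the second neighbor of $p_2$. Since $p_1,p_2$ share no common neighbor, $q_1\neq q_2$. Let $R=\textup{V}(\Gamma)\setminus\{p_1,p_2,q_1,q_2\}$, so $|R|=k-4\geq 1$.
\begin{enumerate}
\item[a.] For $r,s\in R$, applying P\'alfy's condition to $\{p_1,r,s\}$ forces $r\sim s$, so $R$ is a clique.
\item[b.] Applying it to $\{p_1,q_2,r\}$ forces $q_2\sim r$, and applying it to $\{p_2,q_1,r\}$ forces $q_1\sim r$.
\item[c.] If $q_1\not\sim q_2$, then for any $r\in R$ the complement $\overline{\Gamma}$ contains the $5$-cycle $p_1,q_2,q_1,p_2,r,p_1$. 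This contradicts Theorem 2.2, so $q_1\sim q_2$.
\end{enumerate}
Hence $\Gamma$ is uniquely determined by $k$. It is the clique $K_{k-2}$ on $R\cup\{q_1,q_2\}$, together with a path $q_1-p_1-p_2-q_2$ attached to it. This graph has diameter two, so the diameter-three tools of Section 2.2 do not apply. The statement is therefore a non-occurrence result for one explicit graph $\Gamma_k$ for each $k\geq 5$.

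\textbf{Step 2: induction via admissibility.} For $k\geq 6$, deleting a vertex $r\in R$ gives $\Gamma_{k-1}$, which does not occur by induction. Deleting edges at $r$ either breaks P\'alfy's condition or produces a graph whose complement has an odd cycle. The one possible exception is the graph obtained by removing every edge from $r$ to $R$, which would have to be disconnected; that case is disposed of by P\'alfy's inequality (Theorem 2.4).

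For $p_1$, $p_2$, $q_1$ and $q_2$, I would check by hand that each vertex deletion and each incident-edge deletion produces a graph that either violates P\'alfy's condition or Theorem 2.2, or has two cut vertices, or lies in one of the families $\Gamma_{k,t}$ or $\Sigma^{L}_{k,n},\Sigma^R_{k,n}$ that are already excluded by Theorems 2.8--2.10. For example:
\begin{itemize}
\item deleting $p_1$ gives $\Gamma_{k-2,1}$-like graphs, but with a pendant path;
\item deleting the edge $p_1q_1$ leaves $p_1$ pendant and gives a graph with a suspicious cut-vertex structure.
\end{itemize}
If every vertex checks out as admissible, Lemma 2.7 finishes the inductive step.

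\textbf{Step 3: the base case $k=5$, which is the main obstacle.} Here $R=\{r\}$. Deleting $r$ leaves the $4$-cycle $p_1p_2q_2q_1$, and that graph occurs (as a direct product), so $r$ is not admissible. We therefore need genuine group theory.

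I would take a counterexample $G$ of minimal order and argue as follows.
\begin{itemize}
\item Pass to a quotient so that $\Delta(G/N)\neq\Delta(G)$ for every nontrivial normal subgroup $N$.
\item Let $F$ be the Fitting subgroup. Since $p_1$ and $p_2$ have degree two, the standard arguments give normal Sylow subgroups, or Sylow subgroups acting with controlled orbits. I would first try to show that $G$ has a normal nonabelian Sylow $p_1$- or $p_2$-subgroup, using Ito--Michler.
\item Then, using Gallagher's theorem and Clifford theory over $F$, I would show that some character degree is divisible by $p_1 r$ or $p_2 q_1$. Either conclusion contradicts the edge structure.
\end{itemize}
The hardest point is ruling out the configuration in which $G/F$ acts on a chief factor $V$ like a semilinear group $\Gamma(V)$. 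This is the situation in which the vertices $p_1,p_2,q_1,q_2$ are realized by the orbit sizes. I expect to need the detailed description of solvable groups whose graph is a $4$-cycle, and then to show that adjoining the prime $r$ to that structure necessarily connects $r$ to $p_1$ or to $p_2$.
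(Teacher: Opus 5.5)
Your Step 1 is correct: P\'alfy's condition and Theorem 2.2 force $\Gamma$ to be the clique on $R\cup\{q_1,q_2\}$ with the path $q_1-p_1-p_2-q_2$ attached. (This paper only quotes the result from \cite{bissler2025family} and gives no proof, so your argument has to stand on its own.)

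The plan breaks at Step 2. Deleting $p_1$ and its two edges does not leave a graph with a pendant path. It leaves the clique $K_{k-2}$ on $R\cup\{q_1,q_2\}$ with the single pendant vertex $p_2$ attached at $q_2$. That is exactly $\Gamma_{k-2,1}$, which occurs by Theorem 2.8 (compare $A_1=\Gamma_{7,1}$ in Section 5.1). Hence $p_1$, and symmetrically $p_2$, violates condition (i) of Definition 2.6 for \emph{every} $k\geq 5$, so Lemma 2.7 is never available for $\Gamma$. The obstruction you noticed only at $k=5$ (for the vertex $r$) is present at all $k$, so the induction on $k$ reduces nothing.

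Concretely, suppose $G$ realizes $\Gamma$ and $N$ is a normal subgroup of $G$ of $p_1$-power index. Then $\Delta(N)$ is obtained from $\Gamma$ by deleting $p_1$ or some of its edges, or nothing. The case $\Delta(N)=\Gamma_{k-2,1}$ cannot be excluded on graph-theoretic grounds, precisely because $\Gamma_{k-2,1}$ occurs. This is the configuration any proof has to rule out. (A minor point: condition (ii) at $r\in R$ needs no appeal to Theorem 2.4, since deleting any single edge $rx$ already creates an independent triple with $p_1$ or $p_2$.)

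So everything rests on group theory, and Step 3 gives only an outline, and only for $k=5$. Its claims are asserted rather than derived. Nothing you cite turns ``$p_1,p_2$ have degree two'' into a normal Sylow subgroup. Ito--Michler cannot supply this either, since it only describes primes outside $\rho(G)$ and says nothing about $p_1$ or $p_2$. Your final step, showing that some degree is divisible by $p_1r$ or $p_2q_1$ (i.e.\ that adjoining $r$ forces a forbidden edge), is a restatement of the theorem, not an argument for it.

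What is missing is a structural analysis of a solvable group $G$ with $\Delta(G)=\Gamma$, valid for all $k\geq 5$ at once. It must rule out the situation in which $p_1$ or $p_2$ is stripped off in a normal section, leaving $\Gamma_{k-2,1}$.
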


Next, we briefly discuss a well-known way of constructing new occurring character degree graphs from occurring graphs of smaller order. For groups $G$ and $H$, their direct product is denoted $G\times H$ and we note that $\rho(G\times H)=\rho(G)\cup\rho(H)$. With infinitely many primes, it is possible to choose primes for which the sets $\rho(G)$ and $\rho(H)$ are disjoint. It follows that there is an edge in $\Delta(G\times H)$ between $p,q\in\rho(G\times H)$ exactly when one of the following occurs:

\begin{enumerate}
    \item[i.] $p$ and $q$ are adjacent in $\Delta(G)$;
    \item[ii.] $p$ and $q$ are adjacent in $\Delta(H)$;
    \item[iii.] $p\in\rho(G)$ and $q\in\rho(H)$;
    \item[iv.] $q\in\rho(G)$ and $p\in\rho(H)$.   
\end{enumerate}

In particular, we have that the complete graph $K_n\cong K_{n-1}\times K_1$ and so the above direct product construction can be used inductively to show that the complete graph $K_n$ occurs for all $n\in\mathbb{N}$. Such constructions will also be useful when investigating the connected graphs in sections 5.1 through 5.4.\vspace{5mm}

Finally, recall that a graph is called \textbf{$k$-regular} if every vertex has order $k$. In other words, each vertex is adjacent to exactly $k$ other vertices. The complete graph $K_n$ is the unique $n-1$ regular graph with $n$ vertices and always occurs by the direct product construction of the previous paragraph. In general, there is a $k$-regular graph with $n$ vertices for all pairs $k$ and $n$ satisfying $n-1\geq k$ and $nk$ is even. When considering which regular graphs can occur as a character degree graph, theorem A of \cite{MORRESIZUCCARI2014215} puts heavy restrictions on $k$ while theorem B reveals some additional structure about the character degree graph given some information about the underlying group. These results are as follows:

\begin{theorem}[Theorems A and B of \cite{MORRESIZUCCARI2014215}]
    If $\Gamma$ is a non-complete and regular character degree graph of a finite solvable group, then $\Gamma$ is $(n-2)$-regular. Moreover, if $\Gamma$ is $(n-2)$-regular and $G$ has no normal non-abelian Sylow subgroups, then $G$ is a direct product of groups having disconnected character degree graphs.
\end{theorem}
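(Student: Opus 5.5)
Since the statement splits into a graph-theoretic restriction (Theorem A) and a structural conclusion (Theorem B), I would treat the two parts separately. In both parts the combinatorial scaffolding comes from Theorem 2.2 and Corollary 2.3, and the group theory is reserved for a minimal counterexample argument.

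\emph{Reduction for Theorem A.} Let $\Gamma=\Delta(G)$ be $k$-regular on $n$ vertices and non-complete, and set $d=n-1-k\geq 1$. Then $\overline{\Delta}(G)$ is $d$-regular. By Theorem 2.2 it has no odd cycles, so it is bipartite. A regular bipartite graph with $d\geq 1$ has parts of equal size. Hence $\rho(G)=\pi_1\cup\pi_2$ with $|\pi_1|=|\pi_2|=n/2$. Each $\pi_i$ is a clique of $\Delta(G)$, in line with Corollary 2.3. Each $p\in\pi_1$ is non-adjacent to exactly $d$ primes of $\pi_2$, and vice versa. The claim $k=n-2$ is exactly the claim $d=1$, i.e.\ that $\overline{\Delta}(G)$ is a perfect matching.

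\emph{Group-theoretic step for Theorem A.} I would take $G$ to be a counterexample of minimal order with $d\geq 2$. Fix $p\in\pi_1$ and two of its non-neighbours $q,q'\in\pi_2$. The first tool is the standard non-adjacency analysis: pass to a suitable quotient of $G$, or to a normal subgroup $N$ and characters of $G/N$ via Gallagher and Clifford theory. This should yield a subquotient whose character degree graph has two connected components with $p$ on one side and $q,q'$ on the other. By P\'alfy's classification of disconnected graphs (the framework behind Theorem 2.4), such a subquotient has a very rigid structure: a metabelian or $\mathrm{SL}_2$-type configuration. In that structure the primes in the $p$-component are those dividing $|F:F'|$-type indices, and they are tied to degrees built from a single field or module. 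I would then use regularity to count. Every vertex misses exactly $d$ vertices, and the disconnected subquotient forces the component containing $p$ to be adjacent to nothing on the other side. Combined with $|\pi_1|=|\pi_2|$, this should contradict Theorem 2.4's inequality $N\geq 2^{n}-1$ (applied to a subquotient), unless $d=1$. \textbf{This step is the main obstacle.} Producing a disconnected subquotient that captures simultaneously all the non-adjacencies at $p$ requires careful control over which normal subgroup to factor by. I would first try taking $N$ maximal such that $p$ and $q$ remain non-adjacent in $\Delta(G/N)$, and then argue by minimality that $q'$ stays non-adjacent to $p$ as well.

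\emph{Theorem B.} Now assume $\overline{\Delta}(G)$ is a perfect matching $\{p_i,q_i\}_{i=1}^{n/2}$ and $G$ has no normal non-abelian Sylow subgroup. For each $i$, the argument above supplies a normal section $H_i$ (a normal subgroup or a quotient) with $\rho(H_i)=\{p_i,q_i\}$ and $\Delta(H_i)$ disconnected. I would use Ito--Michler: since no Sylow subgroup is both normal and non-abelian, every $p_i$ and $q_i$ really does divide some character degree of a non-nilpotent piece. This lets me identify $H_i$ as a subgroup whose $\{p_i,q_i\}$-structure is Hall-like. I would then show that the $H_i$ centralize one another, using the fact that $p_i$ is adjacent to both $p_j$ and $q_j$ for $j\neq i$ together with coprimality of the relevant orders. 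From this, $G=\prod_i H_i\times A$ with $A$ abelian, and absorbing $A$ into one factor gives the desired direct product decomposition.
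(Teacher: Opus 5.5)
\textbf{There is a genuine gap: the proposal never proves the group-theoretic core of either part.} (This paper gives no proof of the statement either; it only imports it from \cite{MORRESIZUCCARI2014215}.)

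Your reduction is correct. By Theorem 2.2, $\overline{\Delta}(G)$ is a $d$-regular bipartite graph with $d=n-1-k\geq 1$, so its parts have equal size and each part is a clique of $\Delta(G)$. Theorem A is then exactly the assertion $d=1$, but that assertion is the entire content of Theorem A, and the mechanism you sketch cannot deliver it.

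(1) For $N\trianglelefteq G$ we have $\textup{Irr}(G/N)\subseteq\textup{Irr}(G)$, and by Clifford theory every degree of a normal subgroup divides a degree of $G$. So the graph of any normal section is a subgraph of $\Delta(G)$. Hence $p$ and $q$ are non-adjacent in \emph{every} $\Delta(G/N)$ in which both survive. Choosing ``$N$ maximal such that $p$ and $q$ remain non-adjacent'' therefore encodes nothing. In particular it gives no reason for $q'$ to survive, or to land in a different component from $p$.

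(2) Suppose you did have a disconnected section $H$ with $p$ in one component and $q,q'$ in the other. P\'alfy's inequality still yields no contradiction. $\Delta(H)$ is merely a subgraph of $\Delta(G)$ on some subset of $\rho(G)$, so neither the regularity of $\Delta(G)$ nor $|\pi_1|=|\pi_2|$ constrains its component sizes. Components $\{p\}$ and $\{q,q'\}$ satisfy $2\geq 2^1-1$, and the disconnected graph with component sizes two and one does occur, as this paper itself uses.

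(3) Regularity is not inherited by quotients or normal subgroups, so a minimal counterexample gives you no inductive leverage. Your sketch also never treats the case where $G$ has a normal non-abelian Sylow subgroup. Theorem A must cover that case, and Theorem B excludes it precisely because it behaves differently.

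What is missing is an actual group-theoretic argument that a vertex cannot have two non-neighbours when the complement is regular bipartite. The proposal replaces this with the hope that P\'alfy's inequality will do it, and it cannot.

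Your sketch of Theorem B inherits the same gap. The sections $H_i$ with $\rho(H_i)=\{p_i,q_i\}$ and $\Delta(H_i)$ disconnected are said to be ``supplied by the argument above,'' which does not exist.

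Ito--Michler, under the hypothesis, tells you only that no Sylow $r$-subgroup with $r\in\rho(G)$ is normal. It produces no Hall-like subgroups.

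The decisive step, that the $H_i$ centralize one another because $p_i$ is adjacent to $p_j$ and $q_j$, is unjustified. Adjacency in $\Delta(G)$ is a \emph{consequence} of a direct decomposition (as in the direct product construction of Section 2), not evidence for one. No coprimality argument converts adjacency of primes into $[H_i,H_j]=1$.

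If the $H_i$ are quotients $G/N_i$, you must show two things: that $\bigcap_i N_i=1$, and that $G$ maps onto the full product $\prod_i G/N_i$ rather than onto a proper subdirect product of it. Deriving exactly this from the shape of the graph is the substance of Theorem B, and it is the part you would have to supply from \cite{MORRESIZUCCARI2014215}.
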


\section{Algorithm}
The goal of this section is to outline a general algorithm for investigating which character degree graphs (of any order) occur. We do so by describing the general process which will be used to classify graphs throughout the remainder of the paper.

After identifying the total number of graphs of a given order, the most efficient way to eliminate large numbers of graphs is via the conditions of P\'alfy and Akhlaghi et al. (\cite{palfy1998character} and \cite{akhlaghi2018character}). Afterwards, the connected and disconnected graphs can be dealt with via separate techniques. In the case of eight vertices, the initial twelve-thousand three-hundred forty-six graphs can be narrowed down to four disconnected possibilities and two-hundred ninety-nine connected possibilities by applying these conditions. From there, P\'alfy's inequality and the $c(n)$ function which counts the number of component size pairs which satisfy P\'alfy's inequality (of \cite{palfy2001character} and \cite{lewis2025number} respectively) determine which disconnected possibilities do not occur, while the construction of Lewis from \cite{lewis2004classifying} proves to be an excellent tool for constructing examples of the occurring disconnected graphs. In our scenario, this leaves us with two occurring disconnected graphs and two which do not occur.

For the remaining connected graphs, the results of \cite{akhlaghi2018character} allow us to sort by the largest occurring clique size. For graphs having the largest possible size differences (7 and 1 in our case), we see via direct products using the $\Gamma_{k,t}$ family of \cite{bissler2019classifyingfamilies} that all such graphs occur. Graphs whose cliques have smaller size differences can then be sorted by diameter.

Graphs having diameter three are substantially tamed by a combination of the $\rho_i$ notation from \cite{lewis2002solvable5} and the corresponding results of \cite{sass2016character}, which eliminate a majority of the possible graphs. Those which still may occur can often be constructed using the technique introduced in \cite{lewis2002solvable} and generalized by Dugan in \cite{dugan2007solvable}. In our case, we completely classify the graphs having eight vertices and diameter three using these techniques.

We note next that the only connected graph of diameter one is the complete graph which occurs by the remarks at the end of the previous section. This leaves connected graphs of diameter two, which at this time require the widest variety of techniques to approach and would benefit most from the development of new techniques. We investigate such graphs using a combination of previously studied families (\cite{bissler2019classifyingfamilies},\cite{bissler2025family},\cite{laubacher2021prime},\cite{degroot2022prime}), the results on admissible vertices introduced in \cite{bissler2025family}, the result limiting the number of cut vertices from \cite{lewiscutvertex}, the regularity results of \cite{MORRESIZUCCARI2014215} and direct products of known occurring graphs with fewer vertices.

A summary of the techniques outlined above can be seen in figure 1.

\FloatBarrier

\tikzstyle{block}=[rectangle, rounded corners, minimum width=2cm, minimum height=1cm,  text centered, text width=2cm, draw=black, fill=blue!20]

\begin{figure}[h!]
    \centering
    \begin{tikzpicture}[node distance=2cm, scale=0.81, transform shape]
        \node(start)[block]{All graphs of order $n$};
        \node(disconn)[block, below of=start, yshift=-1cm, xshift=-2cm]{Disconnected graphs};
        \node(conn)[block, below of=start, yshift=-1cm, xshift=2cm]{Connected graphs};
        \node(poss disc)[block, below of=disconn, yshift=-1.5cm, xshift=-4cm]{Possibly occurring disconnected graphs};
        \node(non disc)[block, below of=disconn, yshift=-1.5cm]{Non-occurring disconnected graphs};
        \node(clique small)[block, below of=conn, yshift=-1cm]{Cliques of smaller size differences};
        \node(clique n)[block, below of=conn, yshift=-1cm, xshift=4cm]{Cliques of sizes $(n-1)$ and $1$};
        \node(occ disc)[block, below of=poss disc, yshift=-8.5cm]{Occurring disconnected graphs};
        \node(occ)[block, below of=clique n, yshift=-9cm]{Occurring connected graphs};
        \node(diam1)[block, below of=clique small, yshift=-2.1cm, xshift=-1.75cm]{Graphs of diameter $1$ or $2$};
        \node(nonocc)[block, below of=diam1, yshift=-6cm]{Non-Occurring connected graphs};
        \node(diam3)[block, below of=clique small, yshift=-2cm, xshift=1.75cm]{Graphs of diameter $3$};

        \draw [-{Stealth[length=3mm]}] (start) -| node[draw=white, fill=white, below of=start, yshift=0.7cm, xshift=2cm] {P\'alfy/Akhlaghi's Condition (\cite{palfy1998character} and \cite{akhlaghi2018character})} (disconn);
        \draw [-{Stealth[length=3mm]}] (start) -| node[draw=white, fill=white, below of=start, yshift=0.7cm, xshift=-2cm] {P\'alfy/Akhlaghi's Condition (\cite{palfy1998character} and \cite{akhlaghi2018character})} (conn);
        \draw [-{Stealth[length=3mm]}] (disconn) -| node[draw=white, fill=white, below of=disconn, yshift=0.7cm, xshift=2cm] {P\'alfy's Inequality/$c(n)$ function (\cite{palfy2001character} and \cite{lewis2025number})} (poss disc);
        \draw [-{Stealth[length=3mm]}] (disconn) -- node[draw=white, fill=white, below of=disconn, yshift=2.3cm, xshift=-2cm] {P\'alfy's Inequality/$c(n)$ function (\cite{palfy2001character} and \cite{lewis2025number})} (non disc);
        \draw [-{Stealth[length=3mm]}] (poss disc) -- node[text width= 3cm, text centered, draw=white, fill=white, above of=occ disc, yshift=-2cm]{Galois field construction from \cite{lewis2004classifying}} (occ disc);
        \draw [-{Stealth[length=3mm]}] (conn) -- node[draw=white, fill=white, below of=conn, yshift=2.15cm, xshift=2cm]{Sort by largest clique size (via \cite{akhlaghi2018character})} (clique small);
        \draw [-{Stealth[length=3mm]}] (conn) -| node[draw=white, fill=white, below of=conn, yshift=0.75cm, xshift=-2cm]{Sort by largest clique size (via \cite{akhlaghi2018character})} (clique n);
        \draw [-{Stealth[length=3mm]}] (clique small) -| node[fill=white, draw=white, text width=4cm, text centered, below of=clique small, xshift=1.7cm,yshift=0.69cm] {Sort by diameter (possible by \cite{wolf1989diameter} and \cite{lewis2002solvable})} (diam1);
        \draw [-{Stealth[length=3mm]}] (clique small) -| node[fill=white, draw=white, text width=4cm, text centered, below of=clique small, xshift=-1.7cm,yshift=0.69cm] {Sort by diameter (possible by \cite{wolf1989diameter} and \cite{lewis2002solvable})} (diam3);
        \draw [-{Stealth[length=3mm]}] (clique n) -- node[fill=white, draw=white, text width=2.6cm, text centered, below of=clique n, yshift=2.5cm, xshift=0.2cm] {$\Gamma_{n-i,1}\times K_{i-1}$ (via \cite{bissler2019classifyingfamilies} and \cite{isaacs1994character})} (occ);
        \draw [-{Stealth[length=3mm]}] (diam1) -- node[fill=white, draw=white, text width=5cm, text centered, below of=diam1, yshift=1.2cm, xshift=-1.2cm] {$\Gamma_{k,t}$/$\Sigma_{k,n}^*$ families, mulitple cut vertices, admissible vertices, regularity results (\cite{bissler2019classifyingfamilies}, \cite{laubacher2021prime}, \cite{degroot2022prime}, \cite{lewiscutvertex}, \cite{bissler2025family}, \cite{MORRESIZUCCARI2014215})} (nonocc);
        \draw [-{Stealth[length=3mm]}] (diam3) |- node[fill=white, draw=white, text width=3cm, text centered, right of=nonocc, xshift=-2.6cm, yshift=1cm] {$\rho_i$ results from \cite{lewis2002solvable5} and \cite{sass2016character}} (nonocc);
        \draw [-{Stealth[length=3mm]}] (diam1) -- node[fill=white, draw=white, text width=2.9cm, text centered, below of=diam1, yshift=2.9cm, xshift=-1.1cm] {$\Gamma_{k,t}$/$\Sigma_{k,n}^*$ families, direct products (\cite{bissler2019classifyingfamilies}, \cite{laubacher2021prime}, \cite{degroot2022prime}, \cite{isaacs1994character})} (occ);
        \draw [-{Stealth[length=3mm]}] (diam3) -- node[fill=white, draw=white, text width=1.9cm, text centered, below of=diam3, yshift=2cm, xshift=0.04cm] {Skew ring construction from \cite{dugan2007solvable}} (occ);
    \end{tikzpicture}
    \caption{The graph classification algorithm}
\end{figure}

\newpage
\FloatBarrier

\section{Disconnected Graphs}

Up to isomorphism, there are 1,229 disconnected graphs with eight vertices. P\'alfy's condition, however, dictates that only those whose connected components consist of complete graphs may occur. This leaves only four possibilities to investigate, namely the disconnected graphs whose complete connected components have sizes 7 and 1, 6 and 2, 5 and 3, and 4 and 4, respectively. As shown in \cite{lewis2025number}, the function $c(n):=\max\{\alpha\in\mathbb{Z}^+:n\geq 2^{\alpha}+\alpha -1\}$ calculates the number of component size pairs satisfying P\'alfy's inequality. Since $c(8)=2$, it follows that the two component size pairs with the greatest differences in size will satisfy P\'alfy's inequality and therefore the two disconnected graphs in figure 2 do not occur.\vspace{5mm}

\begin{figure}[h!]
   \centering
    \begin{subfigure}{.5\textwidth}
    \centering
    \includegraphics[width=.8\linewidth]{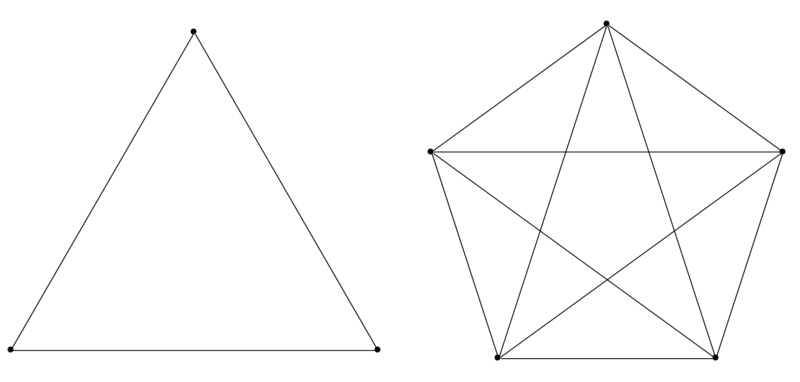}
    \end{subfigure}%
    \begin{subfigure}{.5\textwidth}
    \centering
    \includegraphics[width=0.8\linewidth]{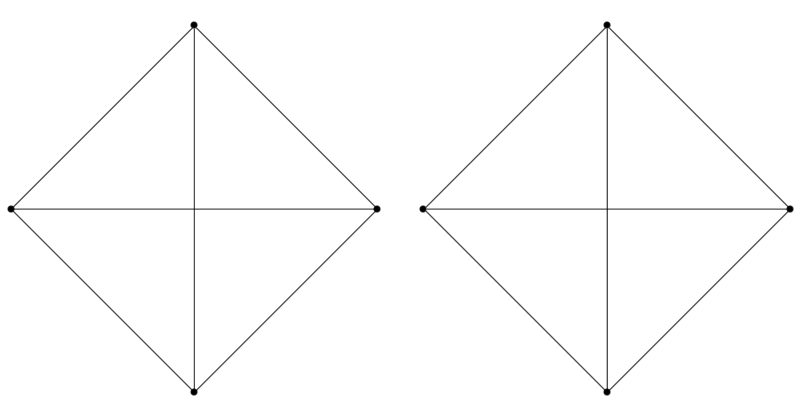}
    \end{subfigure}
    \caption{The two disconnected graphs which do not occur}
\end{figure}

This leaves the two disconnected graphs with the largest differences in component sizes as possibly occurring. To see that the disconnected graphs in figure 3 do, in fact, occur, we will mimic the Galois field construction found in \cite{lewis2004classifying} for appropriate choices of prime powers. We note that following construction is possible for primes other than two, but a bit more caution is required when doing so. Two, however, is sufficient for our examples.

\begin{figure}[h!]
   \centering
    \begin{subfigure}{.5\textwidth}
    \centering
    \includegraphics[width=.8\linewidth]{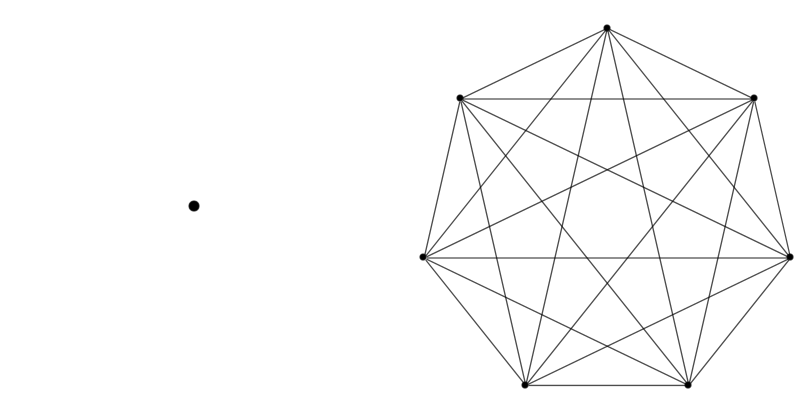}
    \end{subfigure}%
    \begin{subfigure}{.5\textwidth}
    \centering
    \includegraphics[width=0.8\linewidth]{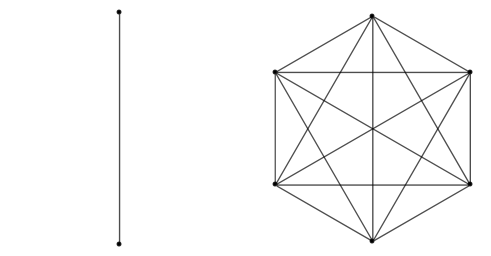}
    \end{subfigure}
    \caption{The two disconnected graphs which occur}
\end{figure}

\subsection{Component Sizes 7 \& 1}
Consider the finite field of order $2^{491}$ acted on by its full multiplication group followed by its Galois group. That is, take $$G_1=\mathbb{F}_{2^{491}}\rtimes\mathbb{F}^{\times}_{2^{491}}\rtimes \textup{Gal}(\mathbb{F}_{2^{491}}/\mathbb{F}_2).$$
Note that $491$ is prime and that $2^{491}-1$ is the product of seven primes. Namely
\begin{align*}
2^{491}-1 &= 983\times 7,707,719\\
&\times 110,097,436,327,057\\
&\times 6,976,447,052,525,718,623\\
&\times 19,970,905,118,623,195,851,890,562,673\\
&\times 3,717,542,676,439,779,473,786,876,643,915,388,439\\
&\times 14,797,326,616,665,978,116,353,515,926,860,025,681,383.
\end{align*}
 It follows that $\textup{cd}(G)=\{1,491,2^{491}-1\}$. Setting the prime factors of $2^{491}-1$ equal to $s,t,u,v,w,x,$ and $y$ respectively, we have $\Delta(G_1)$:

 \begin{figure}[h!]
    \begin{center}
    \includegraphics[scale=0.4]{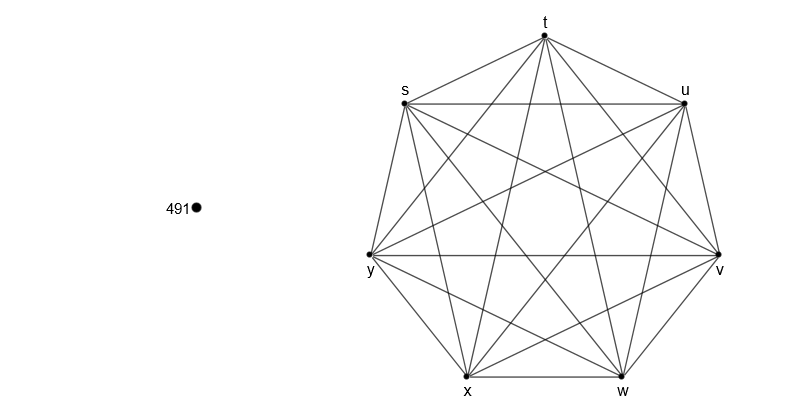}
    \caption{The occurring disconnected graph with component sizes seven and one}
    \end{center}
\end{figure}

\FloatBarrier

\subsection{Component Sizes 6 \& 2}
Now, consider the finite field of order $2^{143}$ acted on by its full multiplication group followed by its Galois group. That is, take $$G_2=\mathbb{F}_{2^{143}}\rtimes\mathbb{F}^{\times}_{2^{143}}\rtimes \textup{Gal}(\mathbb{F}_{2^{143}}/\mathbb{F}_2).$$
Note that $143=11\times 13$ and that $2^{143}-1$ is the product of six primes. Namely $2^{143}-1=23\times 89\times 89\times 8,191\times 724,153\times 158,822,951,431\times 5,782,172,113,400,990,737$. It follows that $\textup{cd}(G)=\{1,11,13,143,2^{143}-1\}$. Setting $s=8,191$, $t=724,153$, $u=158,822,951,431$, and $v=5,782,172,113,400,990,737$, we have that $\Delta(G_2)$ is as follows:

 \begin{figure}[h!]
    \begin{center}
    \includegraphics[width=0.5\textwidth]{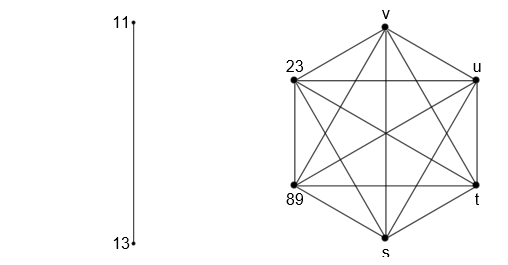}
    \caption{The occurring disconnected graph with component sizes six and two}
    \end{center}
\end{figure}

\FloatBarrier

So, in total, we have that 2 disconnected graphs with eight vertices occur while the remaining 1,227 do not.

\section{Connected Graphs}
As stated previously, the main results of \cite{palfy1998character} and \cite{akhlaghi2018character} are invaluable in ruling out large numbers of graphs. Of the 11,117 non-isomorphic connected graphs with eight vertices, only 299 satisfy both conditions and therefore may occur. It is these 299 graphs that will be the focus of the remainder of this paper. Corollary 2.3 (of \cite{akhlaghi2018character}) states that each graph's vertex set must be covered by two cliques, and this gives a natural way to sort the remaining connected graphs. Following \cite{laubacher2023classifying}, we will sort the remaining graphs into appendices by the largest occurring clique.\vspace{5mm}

Appendix A contains 7 graphs covered by cliques of sizes seven and one. These will be fully classified in section 5.1. Appendix B contains 45 graphs covered by cliques of sizes six and two and will be investigated in section 5.2. Appendix C houses 151 graphs with clique sizes five and three and will be the subject of section 5.3. Finally, appendix D contains 96 graphs which are covered by two cliques of size four. These graphs will be examined in section 5.4. In total, thirty-seven graphs will be shown to occur while fifty-six will be eliminated. This leaves two-hundred and six unclassified graphs which possibly occur.

\subsection{Graphs of Clique Sizes Seven \& One}
There are seven connected graphs covered by cliques of sizes seven and one, all of which will be shown to occur in this section. Note first that the graph $A_1$ is precisely $\Gamma_{7,1}$ of \cite{bissler2019classifyingfamilies}, where it was shown that $\Gamma_{k,1}$ occurs for all $k\in\mathbb{N}$. We also have that the graph $A_7$ is simply the complete graph $K_8$, which is also known to occur. Finally, we see that $A_i\cong\Gamma_{8-i,1}\times K_{i-1}$ for $2\leq i\leq 6$, so it follows that all connected graphs covered by cliques of sizes seven and one occur. These can be seen in figure 6 or in appendix A.

\begin{figure}[h!]
    \centering
    \begin{subfigure}[t]{0.2\textwidth}
        \centering
        \includegraphics[height=1in]{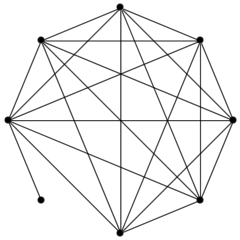}
    \end{subfigure}
    \begin{subfigure}[t]{0.2\textwidth}
        \centering
        \includegraphics[height=1in]{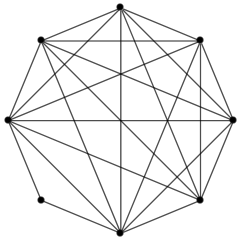}
    \end{subfigure}
    \begin{subfigure}[t]{0.2\textwidth}
        \centering
        \includegraphics[height=1in]{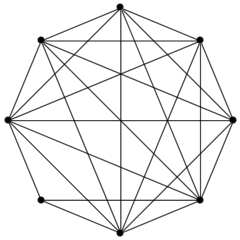}
    \end{subfigure}
    \begin{subfigure}[t]{0.2\textwidth}
        \centering
        \includegraphics[height=1in]{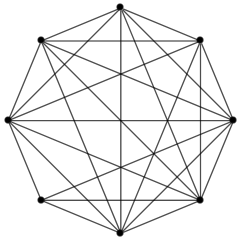}
    \end{subfigure}
\end{figure}
\renewcommand\thefigure{\arabic{figure}}
\setcounter{figure}{5}
\begin{figure}[h!]
    \centering
    \begin{subfigure}[t]{0.2\textwidth}
        \centering
        \includegraphics[height=1in]{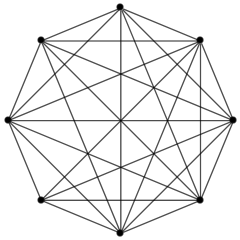}
    \end{subfigure}
    \begin{subfigure}[t]{0.2\textwidth}
        \centering
        \includegraphics[height=1in]{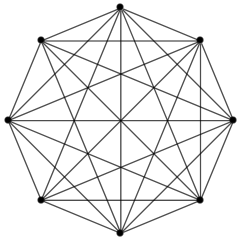}
    \end{subfigure}
    \begin{subfigure}[t]{0.2\textwidth}
        \centering
        \includegraphics[height=1in]{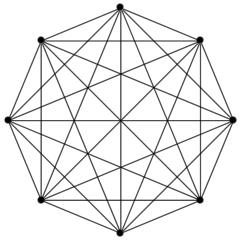}
    \end{subfigure}
    \caption{Graphs $A_i$ for $1\leq i\leq 7$, all of which occur}
\end{figure}

\FloatBarrier

\subsection{Graphs of Clique Sizes Six \& Two}
There are forty-five graphs covered by cliques of sizes six and two. These graphs are listed in appendix B. In this section, we show that nineteen of these graphs occur; sixteen of which will be constructed as direct products, while the remaining three follow the construction in \cite{dugan2007solvable} for appropriate choices of primes. Additionally, we show that five graphs do not occur, leaving twenty-one graphs in appendix B unclassified.

\subsection*{Direct Products}
There are sixteen graphs covered by cliques of sizes six and two which can be constructed via direct products of occurring graphs. Namely the graphs $B_i$ for $i\in\mathfrak{I}$ where $\mathfrak{I}:=\{2,6,12,15,21,32,35,37,38,39,40,41,42,43,44,45\}$. These graphs can be seen in figure 7 or in appendix B.

\begin{figure}[h!]
    \centering
    \begin{subfigure}[t]{0.2\textwidth}
        \centering
        \includegraphics[height=1in]{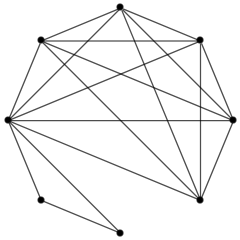}
    \end{subfigure}
    \begin{subfigure}[t]{0.2\textwidth}
        \centering
        \includegraphics[height=1in]{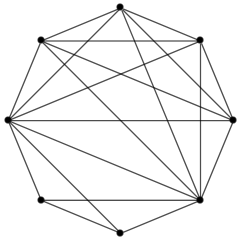}
    \end{subfigure}
    \begin{subfigure}[t]{0.2\textwidth}
        \centering
        \includegraphics[height=1in]{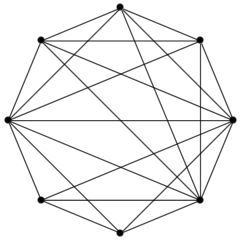}
    \end{subfigure}
    \begin{subfigure}[t]{0.2\textwidth}
        \centering
        \includegraphics[height=1in]{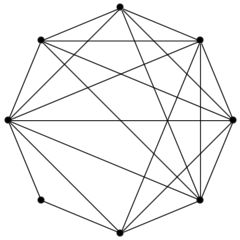}
    \end{subfigure}
\end{figure}
\begin{figure}[h!]
    \centering
    \begin{subfigure}[t]{0.2\textwidth}
        \centering
        \includegraphics[height=1in]{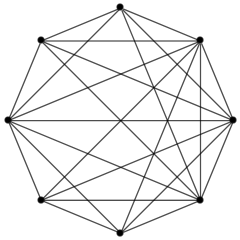}
    \end{subfigure}
    \begin{subfigure}[t]{0.2\textwidth}
        \centering
        \includegraphics[height=1in]{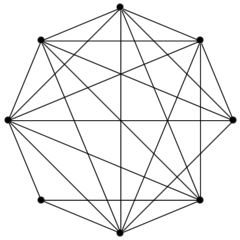}
    \end{subfigure}
    \begin{subfigure}[t]{0.2\textwidth}
        \centering
        \includegraphics[height=1in]{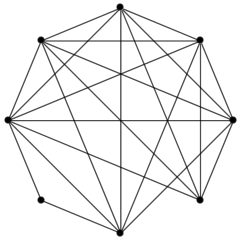}
    \end{subfigure}
    \begin{subfigure}[t]{0.2\textwidth}
        \centering
        \includegraphics[height=1in]{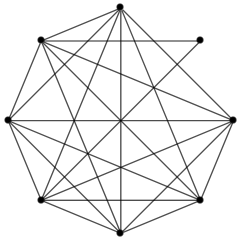}
    \end{subfigure}
\end{figure}
\begin{figure}[h!]
    \centering
    \begin{subfigure}[t]{0.2\textwidth}
        \centering
        \includegraphics[height=1in]{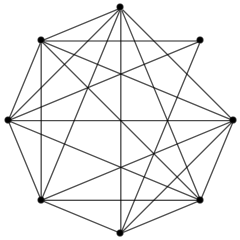}
    \end{subfigure}
    \begin{subfigure}[t]{0.2\textwidth}
        \centering
        \includegraphics[height=1in]{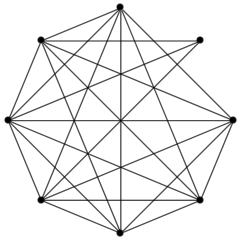}
    \end{subfigure}
    \begin{subfigure}[t]{0.2\textwidth}
        \centering
        \includegraphics[height=1in]{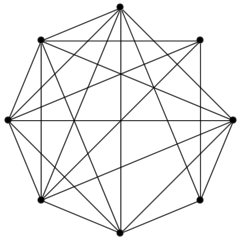}
    \end{subfigure}
    \begin{subfigure}[t]{0.2\textwidth}
        \centering
        \includegraphics[height=1in]{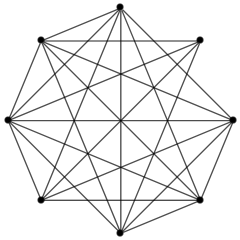}
    \end{subfigure}
\end{figure}
\setcounter{figure}{6}
\begin{figure}[h!]
    \centering
    \begin{subfigure}[t]{0.2\textwidth}
        \centering
        \includegraphics[height=1in]{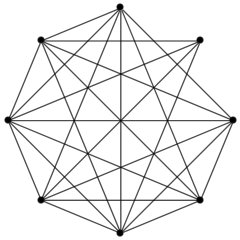}
    \end{subfigure}
    \begin{subfigure}[t]{0.2\textwidth}
        \centering
        \includegraphics[height=1in]{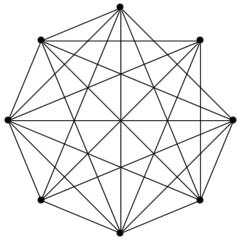}
    \end{subfigure}
    \begin{subfigure}[t]{0.2\textwidth}
        \centering
        \includegraphics[height=1in]{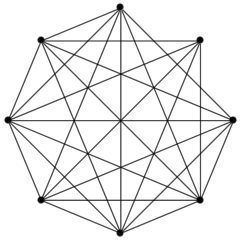}
    \end{subfigure}
    \begin{subfigure}[t]{0.2\textwidth}
        \centering
        \includegraphics[height=1in]{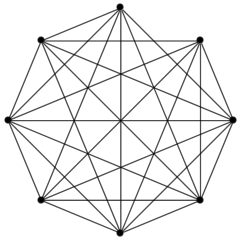}
    \end{subfigure}
    \caption{The graphs $B_i$ for $i\in\mathfrak{I}$}
\end{figure}

\FloatBarrier

Note that graph $B_2$ is the direct product of the disconnected graph with component sizes five and two with $K_1$. Meanwhile, $B_{37}$ is the direct product of the disconnected graph with component sizes five and one with the empty (no edges) graph of order two. Graph $B_{38}$ can also be realized as the direct product of two disconnected graphs, namely the disconnected graph with component sizes four and one with the disconnected graph of component sizes two and one. Additionally, $B_{40}$ is the direct product of the disconnected graph with component sizes 3 and one with itself. All remaining graphs in figure 6 are the direct product of an occurring graph in appendix B of \cite{laubacher2023classifying} with $K_1$. 

\subsection*{Diameter Three Graphs}

There are seven graphs which are covered by cliques of sizes six and two and have diameter three. In this section, we completely classify these graphs by showing that three occur and four do not. The occurring graphs can be seen in figure 8 and will be constructed in the manner introduced by Dugan in \cite{dugan2007solvable}. The non-occurring graphs can be seen in figure 11 and will be eliminated via techniques from \cite{sass2016character}. All seven graphs can also be found in appendix B.

\subsection*{Constructions}
The graphs $B_{7}, B_{13},$ and $B_{36}$ can be constructed using the techniques in \cite{dugan2007solvable} which produce graphs of diameter three. This construction generalizes the construction done by Lewis in \cite{lewis2002solvable}. We outline this construction with appropriate choices of prime numbers to show that the graphs in figure 8 occur.

\renewcommand\thefigure{\arabic{figure}}
\setcounter{figure}{7}
\begin{figure}[h!]
    \centering
    \begin{subfigure}[t]{0.2\textwidth}
        \centering
        \includegraphics[height=1in]{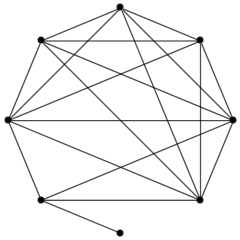}
    \end{subfigure}
    \begin{subfigure}[t]{0.2\textwidth}
        \centering
        \includegraphics[height=1in]{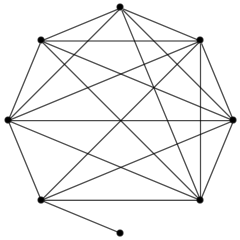}
    \end{subfigure}
    \begin{subfigure}[t]{0.2\textwidth}
        \centering
        \includegraphics[height=1in]{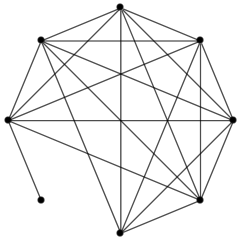}
    \end{subfigure}
    \caption{Graphs $B_{7},B_{13},$ and $B_{36}$ which occur}
\end{figure}

\FloatBarrier

Let $\Phi_n(x)$ be the $n^{th}$ cyclotomic polynomial and choose distinct primes $p,q,$ and $r$ such that $q<r$ and $\Phi_q(p),\Phi_r(p),$ and $\Phi_{qr}(p)$ are pairwise coprime. We note here that for the constructions of $B_{13}$ and $B_{36}$, it is sufficient to choose $q=3$, which results in a simpler computation of character degrees. For graph $B_{7}$, however, a choice of primes which produced the desired graph and included $q=3$ eluded us, so  $q=11$ is used instead. As noted in \cite{dugan2007solvable}, this complicates the calculation of character degrees and is the reason for listing $B_{7}$ separately below.

We start with the finite field $\mathbb{F}$ of order $p^{3r}$. Next, consider the skew polynomial ring $\mathbb{F}\{X\}$ with indeterminant $X$ and commutativity condition $Xf=f^pX$ for all $f\in\mathbb{F}$. Noting that the set generated by $X^{q+1}$ is an ideal, we may form the quotient ring $R:=\mathbb{F}\{X\}/(X^{q+1})$. Let $x$ be the image of $X$ in $R$. We then have that $xR=Rx$ is a nilpotent ideal such that $R/xR\cong\mathbb{F}$, making $xR$ a maximal ideal of $R$. Actually, $xR$ is the unique maximal ideal of $R$, and is therefore the Jacobson radical of $R$, which we denote by $J$. It follows from the quasiregularity of $J$ that $1+J$ is a subgroup of the group of units of $R$ in which each element has the form $1+f_1x+f_2x^2+\cdots+f_qx^q$. This implies that $|1+J|=p^{q^2r}$. Following \cite{dugan2007solvable}, we let $P=1+J$.

Next, let $C$ be the unique subgroup of $\mathbb{F}^{\times}$ with order $\frac{p^{qr}-1}{p-1}$. Notice that there is an action via automorphisms of $C$ on $P$ given by $$(1+f_1x+f_2x^2+\cdots+f_qx^q)\cdot c=1+f_1c^{\frac{p-1}{p-1}}x+f_2c^{\frac{p^2-1}{p-1}}x^2+\cdots+f_qc^{\frac{p^q-1}{p-1}}x^q$$
for $c\in C$. Thus we can construct the semidirect product $T=P\rtimes C$.

Lastly, let $\mathfrak{G}$ be the Galois group of $\mathbb{F}$ over the prime subfield of order $p$. We construct the group $G$ as $G=T\rtimes\mathfrak{G}$. This results in a very large, but nonetheless finite group of order $qrp^{q^2r}\left(\frac{p^{qr}-1}{p-1}\right)$. Since we are concerned with graphs of order eight, we also require that the quantity $\frac{p^{qr}-1}{p-1}$ be the product of five primes (denoted below as $s,t,u,v,$ and $w$), distinct from each other as well as from $p,q,$ and $r$. Namely, we ask that $\frac{p^{qr}-1}{p-1}=stuvw$.

We are now ready to construct graphs $B_{13}$ and $B_{36}$. Recall that for these graphs $q=3$. Thus, the work done in \cite{dugan2007solvable} yields the following set of character degrees:

\begin{dmath*}
\textup{cd}(G)=\left\{1,3,r,3r,stuvw,p^{\frac{3r-1}{2}}stuvw,3p^{3r}\left(\frac{stuvw}{p^2+p+1}\right),p^{3r-3}\left(\frac{stuvw}{p^2+p+1}\right),\\3p^{3r-3}\left(\frac{stuvw}{p^2+p+1}\right),p^{3r-3}stuvw,p^{3r-2}stuvw \right\}.
\end{dmath*}
Now, let $p=23,q=3,$ and $r=13$. Say $G_{13}$ is the resulting group. It follows that $s=7, t=79, u=47,691,619, v=480,393,499,$ and \\ $w=459,408,054,528,299,360,264,076,035,007,841$. Thus, we have:

\begin{dmath*}
\textup{cd}(G_{13})=\left\{1,3,13,39,stuvw,23^{19}stuvw,3\cdot 23^{39}uvw, 23^{36}uvw,3\cdot 23^{36}uvw,\\23^{36}stuvw, 23^{37}stuvw \right\}.
\end{dmath*}
If instead we let $p=2,q=3,$ and $r=17$, it follows that $s=7, t=103, u=2,143, v=11,119,$ and $w=131,071$. Let the resulting group be $G_{36}$. Then we have:

\begin{dmath*}
\textup{cd}(G_{13})=\left\{1,3,17,51,stuvw,2^{25}stuvw,3\cdot 2^{51}tuvw, 2^{48}tuvw,3\cdot 2^{48}tuvw,\\2^{48}stuvw, 2^{49}stuvw \right\}.
\end{dmath*}
Using these sets of character degrees, we can construct $\Delta(G_{13})$ and $\Delta(G_{36})$, which are $B_{13}$ and $B_{36}$ respectively, as can be seen in the following figure.

\begin{figure}[h!]
    \centering
    \begin{subfigure}[t]{0.4\textwidth}
        \centering
        \includegraphics[height=1.8in]{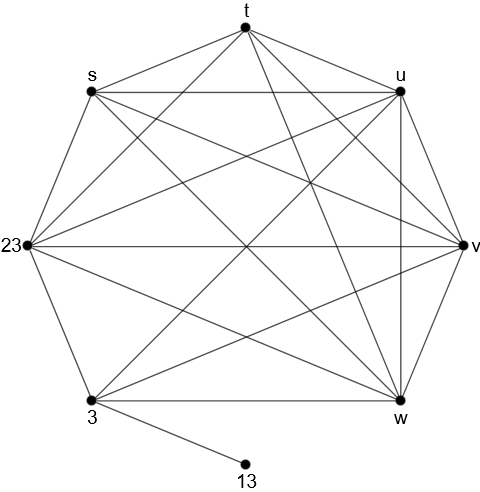}
    \end{subfigure}
    \begin{subfigure}[t]{0.4\textwidth}
        \centering
        \includegraphics[height=1.8in]{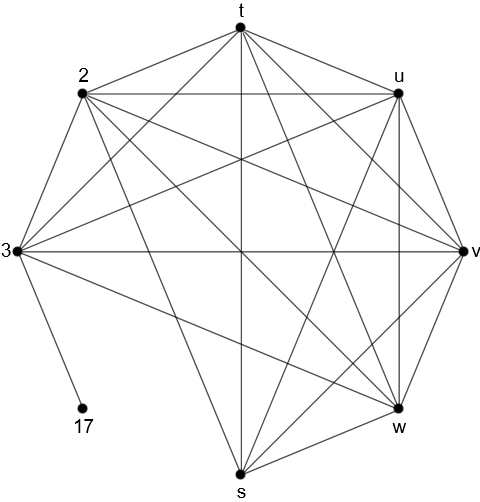}
    \end{subfigure}
    \caption{The graphs $\Delta(G_{13})=B_{13}$ and $\Delta(G_{36})=B_{36}$}
\end{figure}

\FloatBarrier

As mentioned previously, computing the character degrees when $q>3$ is more complicated, as there are more characters whose kernels correspond to the hyperplanes discussed in \cite{dugan2007solvable}. Under these circumstances, we have the following set of character degrees:

\begin{dmath*}
\textup{cd}(G)=\left\{1,q,r,qr,qp^{\frac{q-1}{2}qr}\left(\frac{stuvw}{p^{q-1}+p^{q-2}+\cdots+p+1}\right),\\p^{\frac{q-1}{2}(qr-q)}\left(\frac{stuvw}{p^{q-1}+p^{q-2}+\cdots+p+1}\right),qp^{\frac{q-1}{2}(qr-q)}\left(\frac{stuvw}{p^{q-1}+p^{q-2}+\cdots+p+1}\right),\\\left(p^{\frac{qr-1}{2}}\right)^istuvw,p^{\frac{q-1}{2}(qr-q+j-1)}stuvw\right\}.
\end{dmath*}
where $0\leq i\leq q-2$ and $0\leq j\leq q-1$.\\

Let $p=103, q=11,$ and $r=13$ and call the resulting group $G_7$. It follows that $s=199, t=27,457, u=24,837,228,904,511$\\ $v=1,719,123,581,448,138,027,562,643,544,243,028,633$ and $w=202,517,890-$\\
$042,381,181,353,504,428,375,060,758,814,558,214,141,756,480,294,956,207-$\\
$468,325,371,830,271,245,325,465,652,657,228,450,147,418,518,757,150,106-$\\
$055,438,380,408,708,819,440,618,440,685,189,052,052,263,963,649,184,438-$\\
$396,170,168,226,675,258,860,911,793,155,342,965,670,072,215,782,070,678-$\\
$913,348,734,267,112,675,789,146,113,514,521,901,130,275,814,893,800,628-$\\
$444,591,073,088,415,226,218,929,809,807,992,632,302,948,521,842,571,812-$\\
$874,575,219,893,073,669,007,930,985,649$.

Thus, we have the following set of character degrees for $G_7$:

\begin{dmath*}
\textup{cd}(G_7)=\left\{1,11,13,143,11\cdot 103^{5\times 143}vw,103^{5\times 132}vw,11\cdot 103^{5\times 132}vw,\\103^{71i}stuvw,103^{5\times (131+j)}stuvw\right\}.
\end{dmath*}
where $0\leq i\leq 9$ and $0\leq j\leq 10$. Using this set of character degrees, we can contruct $\Delta(G_7)$ as follows:

\begin{figure}[h!]
    \begin{center}
        \includegraphics[width=0.4\textwidth]{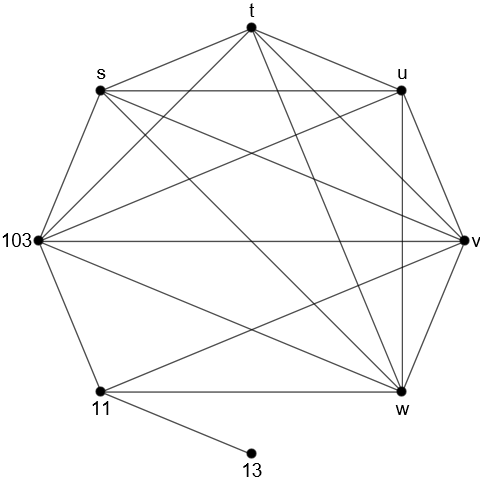}
        \caption{The graph $\Delta(G_7)=B_7$}
    \end{center}
\end{figure}

\FloatBarrier

\subsection*{Non-Occurring Diameter Three Graphs}
We now turn our attention to the four remaining diameter three graphs which are covered by cliques of sizes six and two. These graphs can be found in figure 11 as well as appendix B. To show that these graphs do not occur, we utilize the $\rho$ notation introduced by Lewis in \cite{lewis2002solvable5} and the corresponding results of Sass in \cite{sass2016character}.

\begin{figure}[h!]
    \centering
    \begin{subfigure}[t]{0.2\textwidth}
        \centering
        \includegraphics[height=1in]{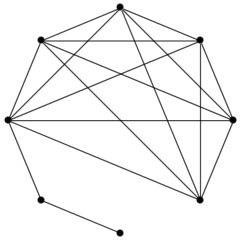}
    \end{subfigure}
    \begin{subfigure}[t]{0.2\textwidth}
        \centering
        \includegraphics[height=1in]{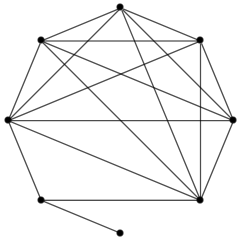}
    \end{subfigure}
    \begin{subfigure}[t]{0.2\textwidth}
        \centering
        \includegraphics[height=1in]{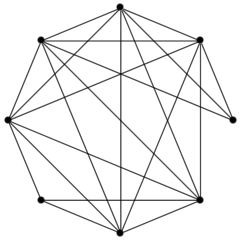}
    \end{subfigure}
    \begin{subfigure}[t]{0.2\textwidth}
        \centering
        \includegraphics[height=1in]{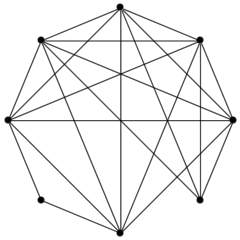}
    \end{subfigure}
    \caption{The diameter three graphs $B_{1}, B_{3}, B_{30},$ and $B_{34}$ which do not occur}
\end{figure}

\FloatBarrier

We first note that for each graph, there is exactly one pair of vertices which are distance three apart. Following \cite{lewis2002solvable5} and \cite{sass2016character}, we label these $p_1$ and $p_4$. Recall that $\rho_3$ is the set of vertices which are distance two from $p_1$. In theorem two of \cite{sass2016character}, Sass showed that an occurring diameter three graph must satisfy $|\rho_3|\geq 3$. For graphs $B_1$ and $B_3$, regardless of which vertex in the pair is chosen as $p_1$, we have that $|\rho_3|<3$ and therefore $B_1$ and $B_3$ do not occur. We also wish to point out that the graph $B_1$ has two cut vertices and therefore could also be eliminated by the main result of \cite{lewiscutvertex}.

Recall also that $\rho_4$ is the set of vertices which are distance three from $p_1$. Additionally, $\rho_2$ is the set of vertices adjacent to both $p_1$ and a vertex in $p_3$. Lastly, $\rho_1$ is the set of vertices adjacent to $p_1$ and no vertex of $p_3$ as well as $p_1$ itself. Theorem four of \cite{sass2016character} states that, for an occurring diameter three graph, if $|\rho_1\cup\rho_2|=n$, then $|\rho_3\cup\rho_4|\geq 2^n$. For graph $B_{30}$ we have that $|\rho_1\cup\rho_2|=4$ but $|\rho_3\cup\rho_4|<16$. Similarly, for $B_{34}$ one can see that $|\rho_1\cup\rho_2|\geq 3$ but $|\rho_3\cup\rho_4|<8$. Thus $B_{30}$ and $B_{34}$ do not occur.

\subsection*{Previously Classified Graphs}
There is one remaining graph in appendix B which has been previously studied. Graph $B_4$ pictured below and in appendix B is isomorphic to $\Gamma_{6,2}$ which was shown not to occur in \cite{bissler2019classifyingfamilies}.

\begin{figure}[h!]
    \begin{center}
        \includegraphics[width=0.2\textwidth]{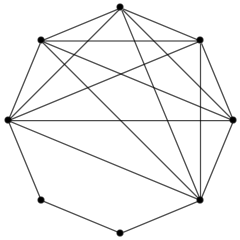}
        \caption{Graph $B_{4}=\Gamma_{6,2}$, which does not occur}
    \end{center}
\end{figure}

\FloatBarrier

We note that the graph $B_4$ has two vertices of order two which are adjacent to one another and share no common neighbor. Thus, $B_4$ can also be eliminated by the main result of \cite{bissler2025family} and therefore falls into two previously studied families.

\subsection*{Unclassified Graphs}
Of the forty-five graphs covered by cliques of sizes six and two, twenty-one remain unclassified. These are the graphs $B_j$ for $j\in\mathfrak{J}:=\{5,8,9,10,11,14,16,17,18,19,20,\\
22,23,24,25,26,27,28,29,31,33\}$. These graphs can be seen in the figure below as well as appendix B, where they will be marked with an asterisk.

\begin{figure}[h!]
    \centering
    \begin{subfigure}[t]{0.15\textwidth}
        \centering
        \includegraphics[height=0.75in]{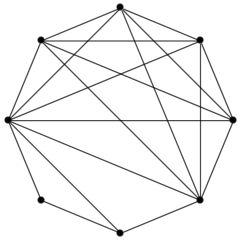}
    \end{subfigure}
    \begin{subfigure}[t]{0.15\textwidth}
        \centering
        \includegraphics[height=0.75in]{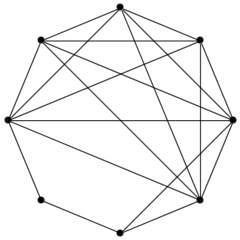}
    \end{subfigure}
    \begin{subfigure}[t]{0.15\textwidth}
        \centering
        \includegraphics[height=0.75in]{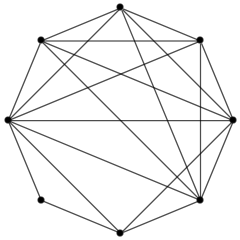}
    \end{subfigure}
    \begin{subfigure}[t]{0.15\textwidth}
        \centering
        \includegraphics[height=0.75in]{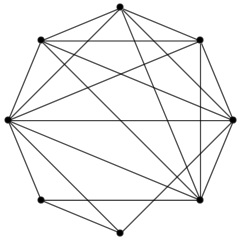}
    \end{subfigure}
    \begin{subfigure}[t]{0.15\textwidth}
        \centering
        \includegraphics[height=0.75in]{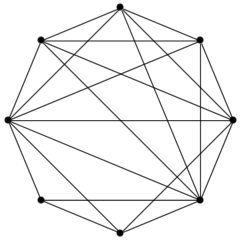}
    \end{subfigure}
\end{figure}
\begin{figure}[h!]
    \centering
    \begin{subfigure}[t]{0.15\textwidth}
        \centering
        \includegraphics[height=0.75in]{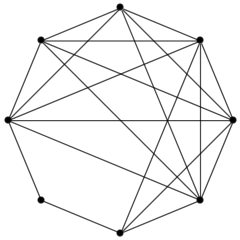}
    \end{subfigure}
    \begin{subfigure}[t]{0.15\textwidth}
        \centering
        \includegraphics[height=0.75in]{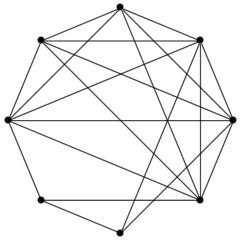}
    \end{subfigure}
    \begin{subfigure}[t]{0.15\textwidth}
        \centering
        \includegraphics[height=0.75in]{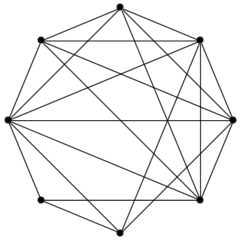}
    \end{subfigure}
    \begin{subfigure}[t]{0.15\textwidth}
        \centering
        \includegraphics[height=0.75in]{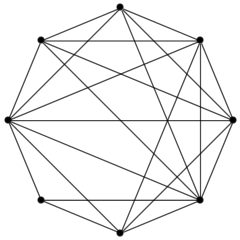}
    \end{subfigure}
    \begin{subfigure}[t]{0.15\textwidth}
        \centering
        \includegraphics[height=0.75in]{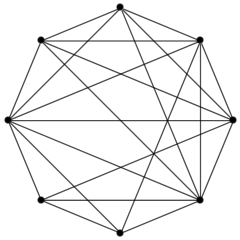}
    \end{subfigure}
\end{figure}
\begin{figure}[h!]
    \centering
    \begin{subfigure}[t]{0.15\textwidth}
        \centering
        \includegraphics[height=0.75in]{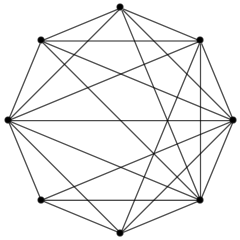}
    \end{subfigure}
    \begin{subfigure}[t]{0.15\textwidth}
        \centering
        \includegraphics[height=0.75in]{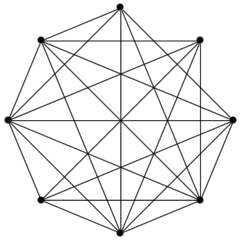}
    \end{subfigure}
    \begin{subfigure}[t]{0.15\textwidth}
        \centering
        \includegraphics[height=0.75in]{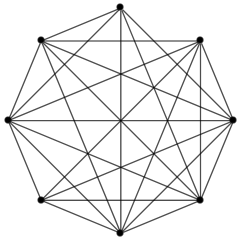}
    \end{subfigure}
    \begin{subfigure}[t]{0.15\textwidth}
        \centering
        \includegraphics[height=0.75in]{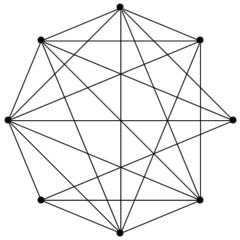}
    \end{subfigure}
    \begin{subfigure}[t]{0.15\textwidth}
        \centering
        \includegraphics[height=0.75in]{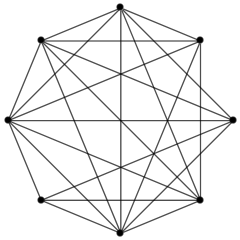}
    \end{subfigure}
\end{figure}
\begin{figure}[h!]
    \centering
    \begin{subfigure}[t]{0.15\textwidth}
        \centering
        \includegraphics[height=0.75in]{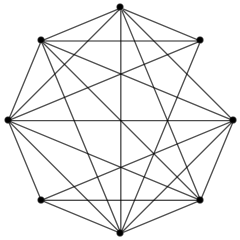}
    \end{subfigure}
    \begin{subfigure}[t]{0.15\textwidth}
        \centering
        \includegraphics[height=0.75in]{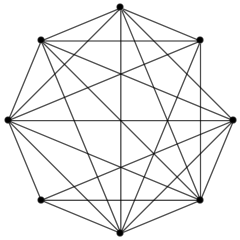}
    \end{subfigure}
    \begin{subfigure}[t]{0.15\textwidth}
        \centering
        \includegraphics[height=0.75in]{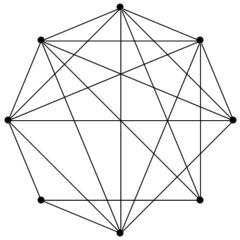}
    \end{subfigure}
    \begin{subfigure}[t]{0.15\textwidth}
        \centering
        \includegraphics[height=0.75in]{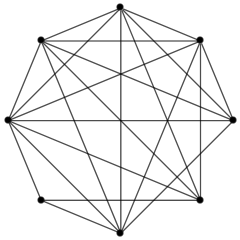}
    \end{subfigure}
    \begin{subfigure}[t]{0.15\textwidth}
        \centering
        \includegraphics[height=0.75in]{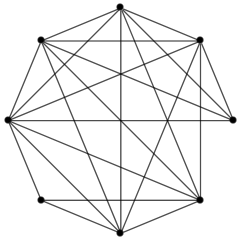}
    \end{subfigure}
\end{figure}
\renewcommand\thefigure{\arabic{figure}}
\setcounter{figure}{12}
\begin{figure}[h!]
\begin{center}
\includegraphics[width=0.15\textwidth]{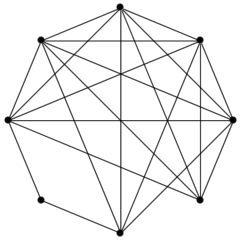}
\end{center}   
\caption{The unclassified graphs $B_j$ for $j\in\mathfrak{J}$}
\end{figure}

\FloatBarrier

As was the case in \cite{laubacher2023classifying}, one major source of difficulty in dealing with the unclassified graphs above is the fact that they contain the occurring disconnected graph with component sizes six and two as a subgraph. Additionally, several graphs have connected subgraphs of order seven which either occur or remain unclassified based off of the work in \cite{laubacher2023classifying}.

\subsection{Graphs of Clique Sizes Five \& Three}
There are one-hundred fifty-one graphs of order eight which are covered by cliques of sizes five and three. In fact, this is the most common combination of clique sizes and thus appendix C is the largest of the four appendices. In this section we classify thirty-five graphs, specifically, we show that the graphs in figure 14 occur, while the graphs in figures 15,16, and 17 do not. This leaves the one-hundred sixteen graphs mentioned at the end of the section as possibly occurring.

\subsection*{Direct Products}
There are ten graphs covered by cliques of sizes five and three which can be constructed via direct products of occurring graphs. Namely the graphs $C_i$ for $i\in\mathfrak{I}$ where $\mathfrak{I}:=\{35,61,63,112,113,147,148,149,150,151\}$. These graphs can be seen in figure 14 or in appendix C.

\begin{figure}[h!]
    \centering
    \begin{subfigure}[t]{0.2\textwidth}
        \centering
        \includegraphics[height=1in]{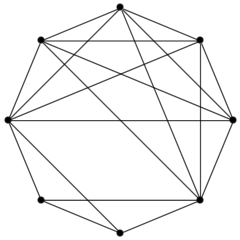}
    \end{subfigure}
    \begin{subfigure}[t]{0.2\textwidth}
        \centering
        \includegraphics[height=1in]{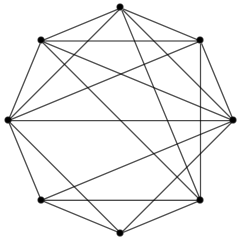}
    \end{subfigure}
    \begin{subfigure}[t]{0.2\textwidth}
        \centering
        \includegraphics[height=1in]{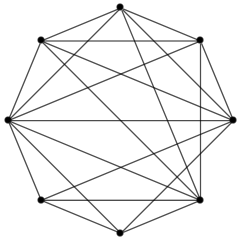}
    \end{subfigure}
    \begin{subfigure}[t]{0.2\textwidth}
        \centering
        \includegraphics[height=1in]{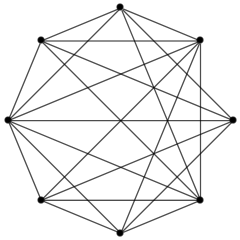}
    \end{subfigure}
\end{figure}
\begin{figure}[h!]
    \centering
    \begin{subfigure}[t]{0.2\textwidth}
        \centering
        \includegraphics[height=1in]{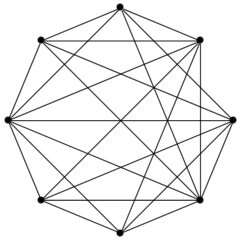}
    \end{subfigure}
    \begin{subfigure}[t]{0.2\textwidth}
        \centering
        \includegraphics[height=1in]{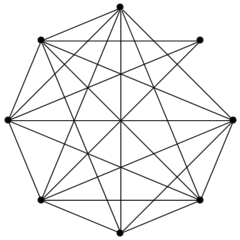}
    \end{subfigure}
    \begin{subfigure}[t]{0.2\textwidth}
        \centering
        \includegraphics[height=1in]{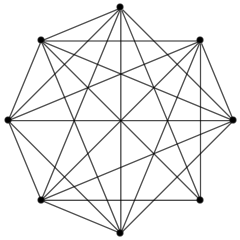}
    \end{subfigure}
    \begin{subfigure}[t]{0.2\textwidth}
        \centering
        \includegraphics[height=1in]{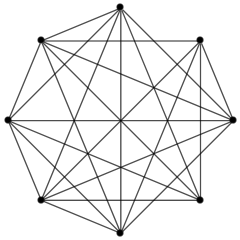}
    \end{subfigure}
\end{figure}
\renewcommand\thefigure{\arabic{figure}}
\setcounter{figure}{13}
\begin{figure}[h!]
    \centering
    \begin{subfigure}[t]{0.2\textwidth}
        \centering
        \includegraphics[height=1in]{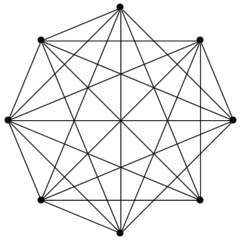}
    \end{subfigure}
    \begin{subfigure}[t]{0.2\textwidth}
        \centering
        \includegraphics[height=1in]{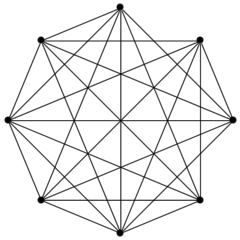}
    \end{subfigure}
    \caption{The graphs $C_i$ for $i\in\mathfrak{I}$}
\end{figure}

\FloatBarrier

Note that the graph $C_{35}$ is the direct product of the disconnected graph with component sizes four and two with the empty graph of order 2. Meanwhile graph $C_{61}$ is the direct product of the disconnected graph with component sizes three and two with the disconnected graph having component sizes two and one. Graph $C_{112}$ is the product of the bowtie graph (shown to occur by Lewis in \cite{lewis2004classifying}) with the disconnected graph having component sizes two and one. Additionally, graph $C_{147}$ is the product of the unique diameter three graph with six vertices and the empty graph of order two. Graphs $C_{148}$ and $C_{149}$ can both be constructed by taking the direct product of an occurring graph from the bottom row in figure 4 of \cite{bissler2019classifying} with the empty graph of order two. Finally, the graphs $C_{63},C_{113},C_{150},$ and $C_{151}$ can all be constructed by taking the direct product of a single vertex with one of the graphs from figure 12 of \cite{laubacher2023classifying}.

\subsection*{Diameter Three Graphs}

There are twenty-three graphs which are covered by cliques of sizes five and three and have diameter three. These graphs are $C_j$ where $j\in\{1,2,4,5,6,9,13,16,18,\\19,25,29,31,32,38,39,43,49,51,53,70,86,88\}$ and are visible in figures 15 and 16 as well as appendix C. Again, using the $\rho$ notation introduced by Lewis in \cite{lewis2002solvable5} and the subsequent results from \cite{sass2016character}, we are able to completely classify the diameter three graphs covered by cliques of sizes five and three. In fact, we show in this section that none of them occur as $\Delta(G)$ for any finite solvable group $G$.

First, we consider the graphs $C_j$ where $j\in\mathfrak{J}_1:=\{1,2,4,5,6,9\}$. For each of these graphs, regardless of which vertices are chosen for $p$ and $q$, it follows that $\rho_3$, the set of vertices which are distance two from $p$, satisfies $|\rho_3|<3$, contradicting theorem 2 of \cite{sass2016character}. It follows that none of the graphs in figure 15 occur as $\Delta(G)$ for any finite solvable $G$.

\begin{figure}[h!]
    \centering
    \begin{subfigure}[t]{0.25\textwidth}
        \centering
        \includegraphics[height=1in]{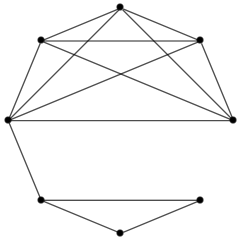}
    \end{subfigure}
    \begin{subfigure}[t]{0.25\textwidth}
        \centering
        \includegraphics[height=1in]{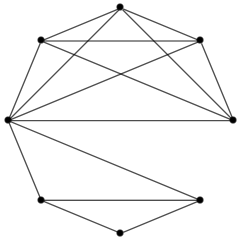}
    \end{subfigure}
    \begin{subfigure}[t]{0.25\textwidth}
        \centering
        \includegraphics[height=1in]{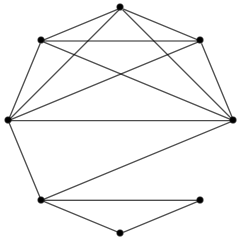}
    \end{subfigure}
\end{figure}
\renewcommand\thefigure{\arabic{figure}}
\setcounter{figure}{14}
\begin{figure}[h!]
    \centering
    \begin{subfigure}[t]{0.25\textwidth}
        \centering
        \includegraphics[height=1in]{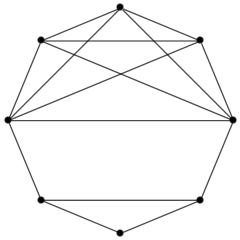}
    \end{subfigure}
    \begin{subfigure}[t]{0.25\textwidth}
        \centering
        \includegraphics[height=1in]{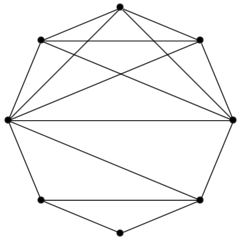}
    \end{subfigure}
    \begin{subfigure}[t]{0.25\textwidth}
        \centering
        \includegraphics[height=1in]{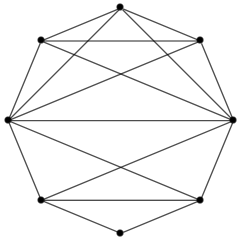}
    \end{subfigure}
    \caption{The graphs $C_j$ for $j\in\mathfrak{J}_1$ which fail to satisfy $|\rho_3|\geq 3$}
\end{figure}

\FloatBarrier

We note as well that the graph $C_1$ has two cut vertices and may also be eliminated by the main result of \cite{lewiscutvertex}.

Denote the remaining seventeen graphs by $C_j$ for $j\in\mathfrak{J}_2:=\{13,16,18,19,25,29,\\31,32,38,39,43,49,51,53,70,86,88\}$. Noting that each of these graphs satisfies $|\rho_1\cup\rho_2|\geq 3$, it follows from theorem 4 of \cite{sass2016character} that $|\rho_3\cup\rho_4|\geq 2^3=8$ for each remaining graph. This is clearly impossible, however, as we are considering graphs with eight total vertices and $\rho_3\cup\rho_4$ is a proper subset of each graph's vertex set. Thus, the graphs in figure 16 do not occur as $\Delta(G)$ for any finite solvable $G$.

\begin{figure}[h!]
    \centering
    \begin{subfigure}[t]{0.15\textwidth}
        \centering
        \includegraphics[height=0.75in]{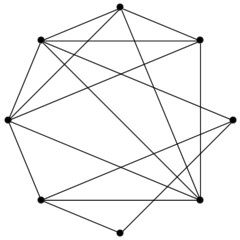}
    \end{subfigure}
    \begin{subfigure}[t]{0.15\textwidth}
        \centering
        \includegraphics[height=0.75in]{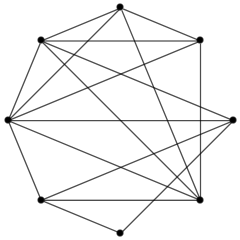}
    \end{subfigure}
    \begin{subfigure}[t]{0.15\textwidth}
        \centering
        \includegraphics[height=0.75in]{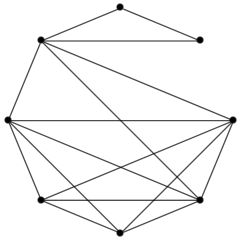}
    \end{subfigure}
    \begin{subfigure}[t]{0.15\textwidth}
        \centering
        \includegraphics[height=0.75in]{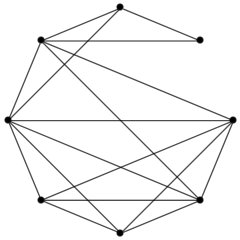}
    \end{subfigure}
    \begin{subfigure}[t]{0.15\textwidth}
        \centering
        \includegraphics[height=0.75in]{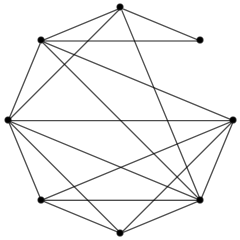}
    \end{subfigure}
\end{figure}
\begin{figure}[h!]
    \centering
    \begin{subfigure}[t]{0.15\textwidth}
        \centering
        \includegraphics[height=0.75in]{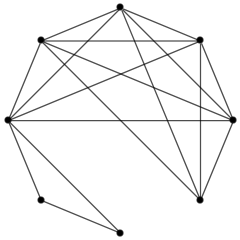}
    \end{subfigure}
    \begin{subfigure}[t]{0.15\textwidth}
        \centering
        \includegraphics[height=0.75in]{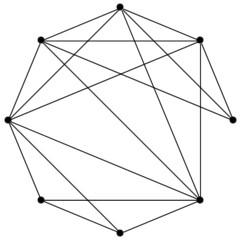}
    \end{subfigure}
    \begin{subfigure}[t]{0.15\textwidth}
        \centering
        \includegraphics[height=0.75in]{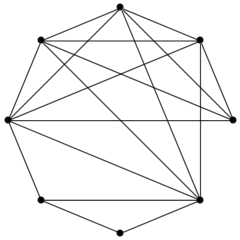}
    \end{subfigure}
    \begin{subfigure}[t]{0.15\textwidth}
        \centering
        \includegraphics[height=0.75in]{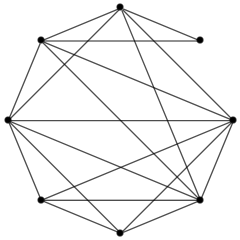}
    \end{subfigure}
    \begin{subfigure}[t]{0.15\textwidth}
        \centering
        \includegraphics[height=0.75in]{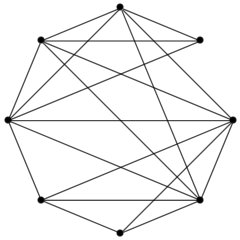}
    \end{subfigure}
\end{figure}
\begin{figure}[h!]
    \centering
    \begin{subfigure}[t]{0.15\textwidth}
        \centering
        \includegraphics[height=0.75in]{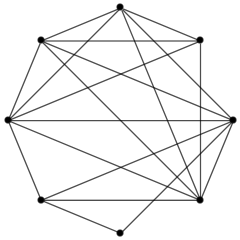}
    \end{subfigure}
    \begin{subfigure}[t]{0.15\textwidth}
        \centering
        \includegraphics[height=0.75in]{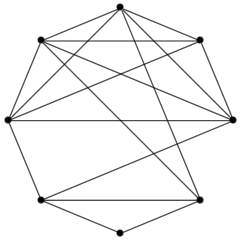}
    \end{subfigure}
    \begin{subfigure}[t]{0.15\textwidth}
        \centering
        \includegraphics[height=0.75in]{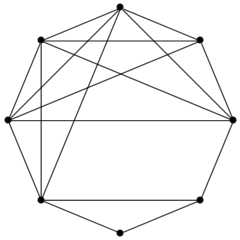}
    \end{subfigure}
    \begin{subfigure}[t]{0.15\textwidth}
        \centering
        \includegraphics[height=0.75in]{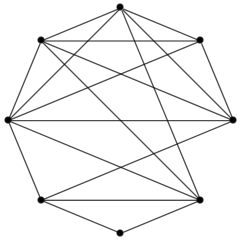}
    \end{subfigure}
    \begin{subfigure}[t]{0.15\textwidth}
        \centering
        \includegraphics[height=0.75in]{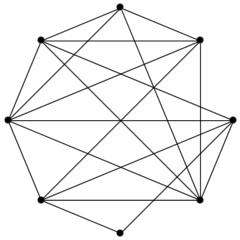}
    \end{subfigure}
\end{figure}
\renewcommand\thefigure{\arabic{figure}}
\setcounter{figure}{15}
\begin{figure}[h!]
    \centering
    \begin{subfigure}[t]{0.15\textwidth}
        \centering
        \includegraphics[height=0.75in]{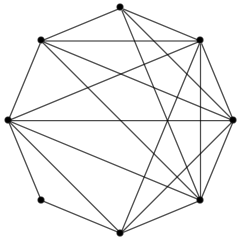}
    \end{subfigure}
    \begin{subfigure}[t]{0.15\textwidth}
        \centering
        \includegraphics[height=0.75in]{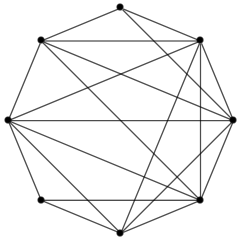}
    \end{subfigure}
    \caption{The graphs $C_j$ for $j\in\mathfrak{J}_2$}
\end{figure}

\FloatBarrier

\subsection*{Previously Classified Graphs}
There are two graphs covered by cliques of sizes five and three that have been previously classified. Specifically, these are graphs $C_{14}$ and $C_{50}$ which can be seen in appendix C and figure 17 below.

\begin{figure}[h!]
    \centering
    \begin{subfigure}[t]{0.25\textwidth}
        \centering
        \includegraphics[height=1in]{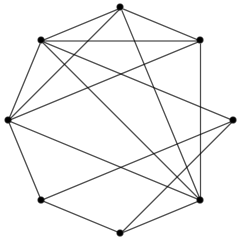}
    \end{subfigure}
    \begin{subfigure}[t]{0.25\textwidth}
        \centering
        \includegraphics[height=1in]{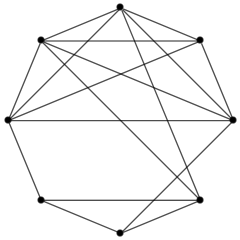}
    \end{subfigure}
    \caption{The graphs $C_{14}$ and $C_{50}$ which were previously classified}
\end{figure}

\FloatBarrier

Notice that graph $C_{14}$ is isomorphic to the graph $\Gamma_{5,3}$ which was shown not occur in \cite{bissler2019classifyingfamilies}. Additionally, graph $C_{50}$ is isomorphic to the graph $\Sigma_{3,1}^R$ of \cite{degroot2022prime}, which does not occur by the main result of that paper.

\subsection*{Unclassified Graphs}
One-hundred sixteen graphs covered by cliques of sizes five and three currently require additional study, and possibly new methods to be classified.\\ Let $\mathfrak{K}:=\{3,7,8,10,11,12,15,17,20,21,22,23,24,26,27,28,30,33,34,36,37,40,41,\\42,44,45,46,47,48,52,54,55,56,57,58,59,60,62,64,65,66,67,68,69,71,72,73,74,\\75,76,77,78,79,80,81,82,83,84,85,87\}\cup\{x:89\leq x\leq 111\}\cup\{x:114\leq x\leq 146\}$. Then the unclassified graphs covered by cliques of sizes five and three are the graphs $C_{k}$ for $k\in\mathfrak{K}$.
In the interest of space, the full list will not be portrayed here, however all graphs are visible in appendix C and will be marked with an asterisk to emphasize that they are unclassified.

\subsection{Graphs of Clique Sizes Four \& Four}
The final combination of clique sizes we encounter is that of four and four. This comprises the second largest family of graphs, with ninety-six graphs covered by two cliques of equal sizes, all of which can be seen in appendix D. In this section, we classify twenty-seven graphs while sixty-nine remain unclassified. Namely, we show that the graph in figure 18 occurs while the graphs in figures 19, 20, 21, 22, and 23 do not. 

\subsection*{Direct Products}
The only graph covered by two cliques of size four which can be constructed as the direct product of known occurring graphs is $D_{96}$, pictured in figure 18 and appendix D. This graph is the direct product of the 4-regular graph with six vertices (shown to occur in \cite{bissler2019classifying}) and the empty graph of order two.

\begin{figure}[h!]
    \begin{center}
        \includegraphics[width=0.2\textwidth]{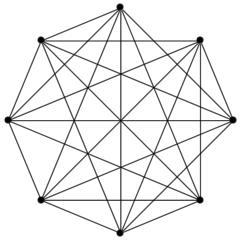}
        \caption{Graph $D_{96}$, which can be constructed via direct products}
    \end{center}
\end{figure} 

\subsection*{Diameter Three Graphs}
When considering graphs covered by cliques of sizes four and four, there are twenty-one which have diameter three. Namely the graphs $D_{j}$ where $j\in\{1,2,3,4,5,6,8,9,\\11,13,14,16,18,20,22,28,36,37,40,53,55\}$, which can be found in figures 19 and 20 as well as appendix D. As was the case when studying graphs covered by cliques of sizes five and three, the $\rho$ notation and results of Lewis and Sass in \cite{lewis2002solvable5} and \cite{sass2016character} respectively allow us to completely classify the twenty-one graphs with diameter three mentioned above. 

To begin, consider the graphs $D_j$ where $j\in\mathfrak{J}_1:=\{1,2,4,6,20\}$. As seen previously, for each of these graphs, regardless of which vertices are chosen for $p$ and $q$, it again follows that the set $\rho_3$ satisfies $|\rho_3|<3$, contradicting theorem 2 of \cite{sass2016character}. It follows that none of the graphs in figure 19 occur as $\Delta(G)$ for any finite solvable $G$.

\begin{figure}[h!]
    \centering
    \begin{subfigure}[t]{0.2\textwidth}
        \centering
        \includegraphics[height=1in]{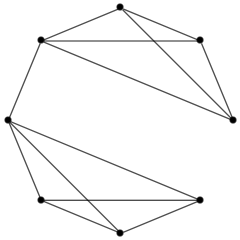}
    \end{subfigure}
    \begin{subfigure}[t]{0.2\textwidth}
        \centering
        \includegraphics[height=1in]{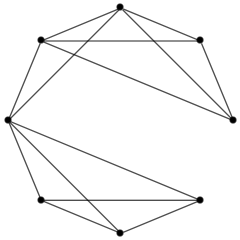}
    \end{subfigure}
    \begin{subfigure}[t]{0.2\textwidth}
        \centering
        \includegraphics[height=1in]{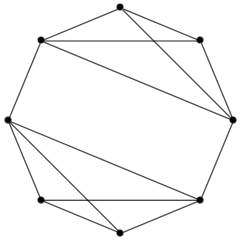}
    \end{subfigure}
\end{figure}
\renewcommand\thefigure{\arabic{figure}}
\setcounter{figure}{18}
\begin{figure}[h!]
    \centering
    \begin{subfigure}[t]{0.2\textwidth}
        \centering
        \includegraphics[height=1in]{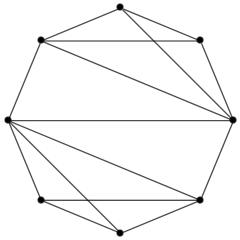}
    \end{subfigure}
    \begin{subfigure}[t]{0.2\textwidth}
        \centering
        \includegraphics[height=1in]{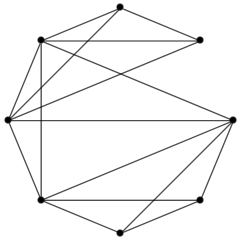}
    \end{subfigure}
    \caption{The graphs $D_j$ for $j\in\mathfrak{J}_1$ which do not occur}
\end{figure}

\FloatBarrier

Again, we point out that graph $D_1$ can also be eliminated via the main result of \cite{lewiscutvertex} as it has two cut vertices.

Next, we examine the graphs $D_j$ where $j\in\mathfrak{J}_2:=\{3,5,8,9,11,13,14,16,18,22,\\28,36,37,40,53,55\}$. For each of these graphs, we have that $|\rho_1\cup\rho_2|\geq 3$ which implies $|\rho_3\cup\rho_4|\geq 8$. Thus, by the same argument used in the previous section, we have that each graph in figure 20 violates theorem 4 of \cite{sass2016character} and therefore does not occur.

\begin{figure}[h!]
    \centering
    \begin{subfigure}[t]{0.2\textwidth}
        \centering
        \includegraphics[height=1in]{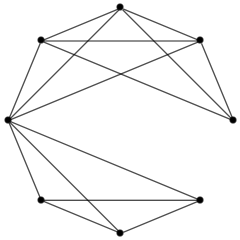}
    \end{subfigure}
    \begin{subfigure}[t]{0.2\textwidth}
        \centering
        \includegraphics[height=1in]{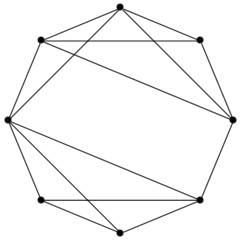}
    \end{subfigure}
    \begin{subfigure}[t]{0.2\textwidth}
        \centering
        \includegraphics[height=1in]{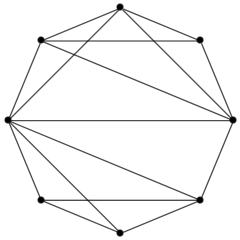}
    \end{subfigure}
    \begin{subfigure}[t]{0.2\textwidth}
        \centering
        \includegraphics[height=1in]{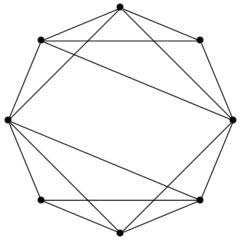}
    \end{subfigure}
\end{figure}
\begin{figure}[h!]
    \centering
    \begin{subfigure}[t]{0.2\textwidth}
        \centering
        \includegraphics[height=1in]{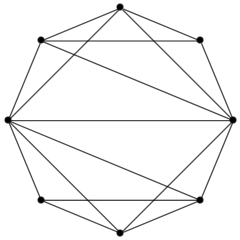}
    \end{subfigure}
    \begin{subfigure}[t]{0.2\textwidth}
        \centering
        \includegraphics[height=1in]{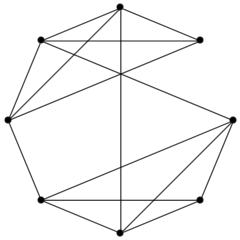}
    \end{subfigure}
    \begin{subfigure}[t]{0.2\textwidth}
        \centering
        \includegraphics[height=1in]{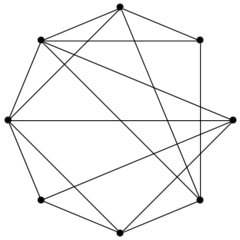}
    \end{subfigure}
    \begin{subfigure}[t]{0.2\textwidth}
        \centering
        \includegraphics[height=1in]{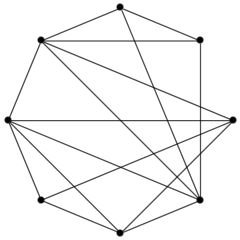}
    \end{subfigure}
\end{figure}
\begin{figure}[h!]
    \centering
    \begin{subfigure}[t]{0.2\textwidth}
        \centering
        \includegraphics[height=1in]{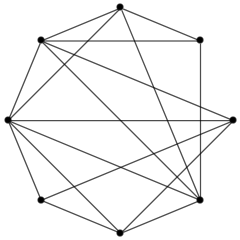}
    \end{subfigure}
    \begin{subfigure}[t]{0.2\textwidth}
        \centering
        \includegraphics[height=1in]{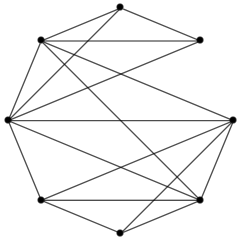}
    \end{subfigure}
    \begin{subfigure}[t]{0.2\textwidth}
        \centering
        \includegraphics[height=1in]{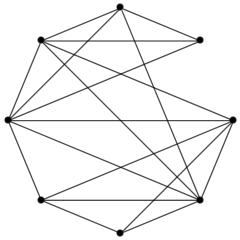}
    \end{subfigure}
    \begin{subfigure}[t]{0.2\textwidth}
        \centering
        \includegraphics[height=1in]{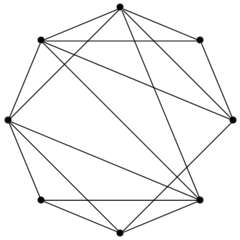}
    \end{subfigure}
\end{figure}
\renewcommand\thefigure{\arabic{figure}}
\setcounter{figure}{19}
\begin{figure}[h!]
    \centering
    \begin{subfigure}[t]{0.2\textwidth}
        \centering
        \includegraphics[height=1in]{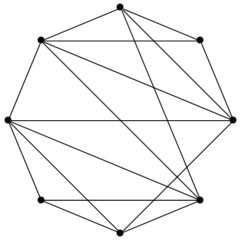}
    \end{subfigure}
    \begin{subfigure}[t]{0.2\textwidth}
        \centering
        \includegraphics[height=1in]{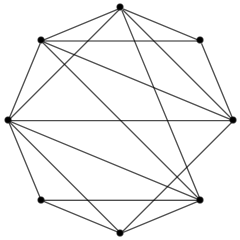}
    \end{subfigure}
    \begin{subfigure}[t]{0.2\textwidth}
        \centering
        \includegraphics[height=1in]{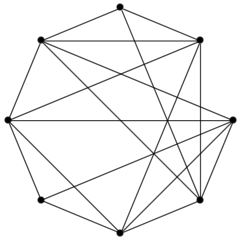}
    \end{subfigure}
    \begin{subfigure}[t]{0.2\textwidth}
        \centering
        \includegraphics[height=1in]{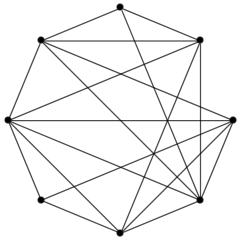}
    \end{subfigure}
    \caption{The graphs $D_j$ for $j\in\mathfrak{J}_2$ which do not occur}
\end{figure}

\FloatBarrier

\subsection*{Previously Classified Graphs}
There are four graphs covered by cliques of sizes four and four which have been previously studied. Namely, the graphs $D_{15},D_{43},D_{77},$ and $D_{86}$ of figure 22. All four graphs also appear in appendix D.

\begin{figure}[h!]
    \centering
    \begin{subfigure}[t]{0.25\textwidth}
        \centering
        \includegraphics[height=1in]{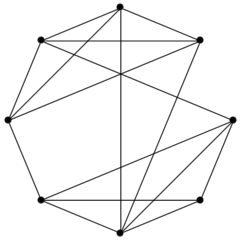}
    \end{subfigure}
    \begin{subfigure}[t]{0.25\textwidth}
        \centering
        \includegraphics[height=1in]{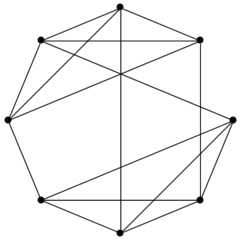}
    \end{subfigure}
\end{figure}
\begin{figure}[h!]
    \centering
    \begin{subfigure}[t]{0.25\textwidth}
        \centering
        \includegraphics[height=1in]{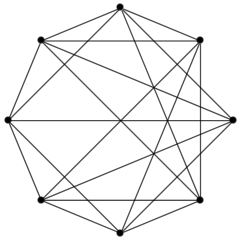}
    \end{subfigure}
    \begin{subfigure}[t]{0.25\textwidth}
        \centering
        \includegraphics[height=1in]{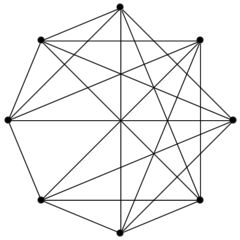}
    \end{subfigure}
    \caption{The previously studied graphs $D_{15},D_{43},D_{77}$ and $D_{86}$}
\end{figure}

\FloatBarrier

Notice that $D_{15}$ is isomorphic to the graph $\Sigma_{3,1}^L$ which was shown not occur in \cite{laubacher2021prime}. Also, the graph $D_{43}$ is simply the graph $\Gamma_{4,4}$ which was eliminated by the main result of \cite{bissler2019classifyingfamilies}.

The graphs $D_{77}$ and $D_{86}$ are both $5$-regular which cannot occur by theorem A of \cite{MORRESIZUCCARI2014215}. Note that this makes the complete graph $K_8$ and the graph $D_{96}$ (which is 6-regular) the only regular graphs with eight vertices that occur.

\subsection*{An Additional Elimination}
Finally, we eliminate an additional graph which is covered by cliques of sizes four and four. Recall lemma 2.7 (lemma 2.3 of \cite{bissler2025family}) which states that if every vertex of a graph $\Gamma$ is admissible, then $\Gamma$ cannot be $\Delta(G)$ for any finite solvable group $G$. We will show that every vertex of the graph $D_{19}$ is admissible, and therefore it does not occur.

\begin{figure}[h!]
    \begin{center}
    \includegraphics[width=0.5\textwidth]{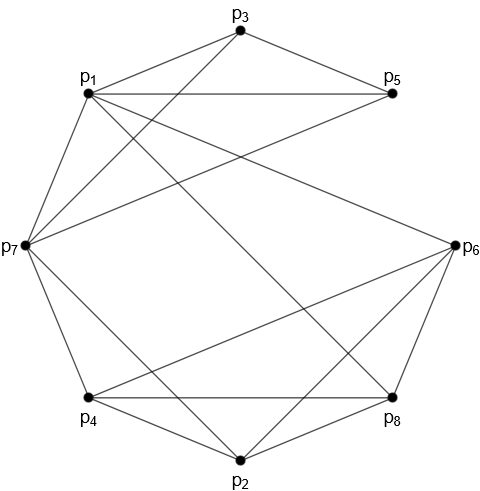}
    \caption{The non-occurring graph $D_{19}$ with all admissible vertices}
    \end{center}
\end{figure}

\FloatBarrier

\begin{lemma}
    The graph $D_{19}$ of figure 22 has all admissible vertices and therefore does not occur.
\end{lemma}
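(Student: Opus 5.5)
The plan is to check the two conditions of Definition 2.6 at each vertex of $D_{19}$ and then apply Lemma 2.7. I do not have the picture of $D_{19}$, but the paper tells us some structure. It is covered by two cliques of size four, $\{a_1,a_2,a_3,a_4\}$ and $\{b_1,b_2,b_3,b_4\}$, with some cross edges between them. It has diameter two, and it satisfies the conditions of P\'alfy and Akhlaghi et al. First I would use the automorphisms of $D_{19}$ to split the eight vertices into orbits. Then only one representative vertex $p$ per orbit needs to be treated.

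For condition (i), I would remove $p$ and show that $D_{19}-p$ does not occur. This is a graph on seven vertices, so I would identify it up to isomorphism among the graphs of \cite{laubacher2023classifying}. The natural hope is that $D_{19}-p$ is one of the following:
\begin{enumerate}
\item[(a)] a graph with two cut vertices, ruled out by \cite{lewiscutvertex};
\item[(b)] a member of the $\Gamma_{k,t}$ or $\Sigma^{L}_{k,n}$, $\Sigma^{R}_{k,n}$ families;
\item[(c)] a diameter three graph violating Lemma 2.5;
\item[(d)] a graph already shown not to occur in \cite{laubacher2023classifying}.
\end{enumerate}
Removing $p$ turns its clique into a $K_3$. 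This often pushes the graph toward a $\Gamma_{4,3}$- or $\Sigma$-type configuration, or toward diameter three with $|\rho_3|<3$ or $|\rho_1\cup\rho_2|\ge 3$. For seven vertices, $|\rho_1\cup\rho_2|\ge 3$ already forces a violation of Lemma 2.5(ii), so (c) should cover many cases quickly.

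For condition (ii), I would list every graph obtained from $D_{19}$ by deleting a nonempty set of edges at $p$. Any deletion that breaks P\'alfy's condition or Theorem 2.2 is immediately non-occurring. Concretely, this happens when $p$ loses an edge to a vertex $x$ such that $p$, $x$ and some third vertex $y$ are pairwise non-adjacent, or such that the complement gains an odd cycle. If every edge at $p$ is removed, $p$ becomes isolated. A disconnected occurring graph would then need component sizes $7$ and $1$, which forces $K_7$ on the remaining vertices; that is impossible because $D_{19}$ has non-edges away from $p$. The surviving subgraphs are eight-vertex graphs. I would show that each one either falls in the diameter three lists already eliminated above (figures 11, 15, 16, 19, 20), or lies in a previously studied family (figures 12, 17, 21), or has two cut vertices.

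The main obstacle will be condition (ii) for a vertex of high degree. There the number of edge-deleted subgraphs is large, and a few may survive P\'alfy and Theorem 2.2 while landing in the unclassified lists. If that happens, I would first try to show that the surviving graph is isomorphic to one of the eliminated diameter three graphs, since deleting edges at $p$ tends to increase distances. Failing that, I would check whether it satisfies the hypotheses of Theorem 2.11, namely two adjacent degree-two vertices with no common neighbour. Once (i) and (ii) hold for every orbit representative, Lemma 2.7 gives that $D_{19}$ does not occur.
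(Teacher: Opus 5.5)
Your outline has the same skeleton as the paper's proof. You verify (i) and (ii) of Definition 2.6 at each vertex, use non-occurring seven-vertex graphs from \cite{laubacher2023classifying} for (i) and eight-vertex eliminations for (ii), then apply Lemma 2.7. As written, though, it is a template rather than a proof. Nothing specific to $D_{19}$ is checked, and the step you flag as the main obstacle is left open with fallbacks that cannot succeed.

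For (i), the content is just the identifications, which the paper makes explicitly. Label the two $K_4$'s by parity of index. Then deleting $p_1$ (or, by symmetry, $p_7$) gives $C_2$ of \cite{laubacher2023classifying}. Deleting $p_3$ or $p_5$ gives $C_{10}\cong\Sigma^L_{2,2}$, and deleting any even-indexed vertex gives $C_{18}$. All three are known not to occur. This fits your options (b) and (d), but it has to be carried out.

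The genuine gap is in (ii). You propose to enumerate, vertex by vertex, all nonempty sets of incident edges and match the results against figures 11, 15, 16, 19, 20, allowing that some survivors might land among unclassified graphs. If that happened, your argument would fail. The paper closes this by sorting edges by type and using monotonicity.

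\emph{Intra-clique edges.} Every edge inside one of the two cliques is forced. For every such pair except $p_1p_7$, some vertex of the other clique is adjacent to neither endpoint, so deleting the edge violates P\'alfy's condition. Deleting $p_1p_7$ puts the $5$-cycle $p_1,p_7,p_6,p_5,p_4$ into the complement, contradicting Theorem 2.2. Both kinds of violation persist under further deletions, so any deletion set containing an intra-clique edge is ruled out.

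\emph{Cross edges.} A deletion set consisting only of cross edges leaves both $K_4$'s intact, so every vertex keeps degree at least $3$. Deleting a single cross edge $p_ip_j$ already gives $d(p_i,p_5)=3$, and further deletions only lengthen distances. If this produces diameter above three, the graph is excluded by \cite{wolf1989diameter}. If it disconnects the graph, then with both cliques intact the result is $2K_4$, which is excluded by P\'alfy's inequality.

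\emph{The closing invariant.} The diameter-three classification of Sections 5.2--5.4 shows that the only occurring eight-vertex graphs of diameter three are $B_7$, $B_{13}$ and $B_{36}$. Each of these has a vertex of degree one: the prime $r$ in Dugan's construction is adjacent only to $q$. Hence none of the cross-edge-deleted graphs occurs. This single invariant handles every case at once, needs no isomorphism checking, and shows that no survivor is unclassified.

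Finally, your fallback via Theorem 2.11 could never have helped here. With both $K_4$'s intact there are no vertices of degree two.
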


\begin{proof}
    For convenience, we label the vertices as in figure 22, with vertices in the same clique having indices of the same parity. For any choice of $p_i$ and $p_j$ in the same clique (except possibly $p_1p_7$), there exists $p_k$ in the opposite clique such that $p_ip_k\notin\textup{E}(\Gamma)$ and $p_jp_k\notin\textup{E}(\Gamma) $. Thus if the edge $p_ip_j$ is lost, the resulting graph will violate P\'alfy's condition. If $p_1p_7$ is lost, the compliment of the resulting graph contains the cycle $p_1,p_7,p_6,p_5,p_4,p_1$ of length 5, which violates the main result from \cite{akhlaghi2018character}. Thus, no edge can be lost between two vertices in the same clique.\vspace{0.5cm}

    Now suppose $p_i$ is a vertex with even index and $p_j$ is a vertex with odd index such that $p_ip_j\in\textup{E}(\Gamma)$. If $p_ip_j$ is lost, then resulting graph satisfies $d(p_i,p_5)=3$ and has no vertex of order 1. Since all occurring diameter three graphs with eight vertices were shown to have a vertex of order 1, the resulting graph must be a non-occurring graph of diameter three. Thus, no individual edge of $\Gamma$ may be lost. Additionally, a combination of the arguments from the first two paragraphs show that no combination of edges can be lost. \vspace{0.5cm}

    Finally, suppose that $p_1$ and all incident edges are removed from the graph. The result is graph $C_2$ of \cite{laubacher2023classifying} which was shown not to occur. By symmetry, $p_7$ also cannot be removed. If $p_3$ (or by symmetry, $p_5$) is removed, the result is the graph $\Sigma_{2,2}^L / C_{10}$ of \cite{laubacher2021prime} and \cite{laubacher2023classifying} respectively, which also does not occur. Finally, if $p_i$ is removed where $i$ is any even index, then the result is the non-occurring graph $C_{18}$ of \cite{laubacher2023classifying}. Thus, all vertices of $\Gamma$ are admissible, and so by Lemma 2.7, $\Gamma$ does not occur.\\
\end{proof}

We note that the graph above nearly falls into a previously studied family. In fact, it can be realized as $\Sigma^{2L}_{2,2}$ which has two special vertices, both of which are adjacent to each other. These graphs with multiple special vertices were briefly mentioned in \cite{laubacher2021prime} and utilized in \cite{degroot2022prime} to investigate the $\Sigma_{k,n}^R$ family of graphs, but have not yet been studied in general.

\subsection*{Unclassified Graphs}
Of the ninety-six graphs in appendix D, sixty-nine evade classification. Defining $\mathfrak{K}:=\{7,10,12,17,21,23,24,25,26,27,29,30,31,32,33,34,35,38,39,41,42,44,45,\\46,47,48,49,50,51,52,54\}\cup\{x:56\leq x\leq 76\}\cup\{x:78\leq x\leq 85\}\cup\{x:87\leq x\leq 95\}$, we have that the possibly occurring graphs covered by cliques of four and four are $D_k$ for $k\in\mathfrak{K}$. Again, for the sake of space, we choose to display the non-occurring graphs only in appendix D, where they will be marked with an asterisk.

\bibliography{8GraphBibliography}
\bibliographystyle{plain}\vspace{2cm}

\center
\appendix{Appendix A: connected graphs covered by cliques of seven \& one}

\begin{figure}[h!]
    \centering
    \begin{subfigure}[t]{0.15\textwidth}
        \centering
        \includegraphics[height=0.75in]{277}
        \caption*{$A_1$}
    \end{subfigure}
    \begin{subfigure}[t]{0.15\textwidth}
        \centering
        \includegraphics[height=0.75in]{278}
        \caption*{$A_2$}
    \end{subfigure}
    \begin{subfigure}[t]{0.15\textwidth}
        \centering
        \includegraphics[height=0.75in]{279}
        \caption*{$A_3$}
    \end{subfigure}
    \begin{subfigure}[t]{0.15\textwidth}
        \centering
        \includegraphics[height=0.75in]{280}
        \caption*{$A_4$}
    \end{subfigure}
\end{figure}
\begin{figure}[h!]
    \centering
    \begin{subfigure}[t]{0.15\textwidth}
        \centering
        \includegraphics[height=0.75in]{281}
        \caption*{$A_5$}
    \end{subfigure}
    \begin{subfigure}[t]{0.15\textwidth}
        \centering
        \includegraphics[height=0.75in]{298}
        \caption*{$A_6$}
    \end{subfigure}
    \begin{subfigure}[t]{0.15\textwidth}
        \centering
        \includegraphics[height=0.75in]{299}
        \caption*{$A_7$}
    \end{subfigure}
\end{figure}

\FloatBarrier
\newpage

\appendix{Appendix B: connected graphs covered by cliques of six \& two}

\begin{figure}[h!]
    \centering
    \begin{subfigure}[t]{0.15\textwidth}
        \centering
        \includegraphics[height=0.75in]{59}
        \caption*{$B_1$}
    \end{subfigure}
    \begin{subfigure}[t]{0.15\textwidth}
        \centering
        \includegraphics[height=0.75in]{60}
        \caption*{$B_2$}
    \end{subfigure}
    \begin{subfigure}[t]{0.15\textwidth}
        \centering
        \includegraphics[height=0.75in]{67}
        \caption*{$B_3$}
    \end{subfigure}
    \begin{subfigure}[t]{0.15\textwidth}
        \centering
        \includegraphics[height=0.75in]{68}
        \caption*{$B_4$}
    \end{subfigure}
    \begin{subfigure}[t]{0.15\textwidth}
        \centering
        \includegraphics[height=0.75in]{69}
        \caption*{$B_5*$}
    \end{subfigure}
    \begin{subfigure}[t]{0.15\textwidth}
        \centering
        \includegraphics[height=0.75in]{71}
        \caption*{$B_6$}
    \end{subfigure}
\end{figure}
\begin{figure}[h!]
    \centering
    \begin{subfigure}[t]{0.15\textwidth}
        \centering
        \includegraphics[height=0.75in]{108}
        \caption*{$B_7$}
    \end{subfigure}
    \begin{subfigure}[t]{0.15\textwidth}
        \centering
        \includegraphics[height=0.75in]{109}
        \caption*{$B_8*$}
    \end{subfigure}
    \begin{subfigure}[t]{0.15\textwidth}
        \centering
        \includegraphics[height=0.75in]{110}
        \caption*{$B_9*$}
    \end{subfigure}
    \begin{subfigure}[t]{0.15\textwidth}
        \centering
        \includegraphics[height=0.75in]{111}
        \caption*{$B_{10}*$}
    \end{subfigure}
    \begin{subfigure}[t]{0.15\textwidth}
        \centering
        \includegraphics[height=0.75in]{112}
        \caption*{$B_{11}*$}
    \end{subfigure}
    \begin{subfigure}[t]{0.15\textwidth}
        \centering
        \includegraphics[height=0.75in]{117}
        \caption*{$B_{12}$}
    \end{subfigure}
\end{figure}
\begin{figure}[h!]
    \centering
    \begin{subfigure}[t]{0.15\textwidth}
        \centering
        \includegraphics[height=0.75in]{161}
        \caption*{$B_{13}$}
    \end{subfigure}
    \begin{subfigure}[t]{0.15\textwidth}
        \centering
        \includegraphics[height=0.75in]{163}
        \caption*{$B_{14}*$}
    \end{subfigure}
    \begin{subfigure}[t]{0.15\textwidth}
        \centering
        \includegraphics[height=0.75in]{164}
        \caption*{$B_{15}$}
    \end{subfigure}
    \begin{subfigure}[t]{0.15\textwidth}
        \centering
        \includegraphics[height=0.75in]{171}
        \caption*{$B_{16}*$}
    \end{subfigure}
    \begin{subfigure}[t]{0.15\textwidth}
        \centering
        \includegraphics[height=0.75in]{172}
        \caption*{$B_{17}*$}
    \end{subfigure}
    \begin{subfigure}[t]{0.15\textwidth}
        \centering
        \includegraphics[height=0.75in]{173}
        \caption*{$B_{18}*$}
    \end{subfigure}
\end{figure}
\begin{figure}[h!]
    \centering
    \begin{subfigure}[t]{0.15\textwidth}
        \centering
        \includegraphics[height=0.75in]{199}
        \caption*{$B_{19}*$}
    \end{subfigure}
    \begin{subfigure}[t]{0.15\textwidth}
        \centering
        \includegraphics[height=0.75in]{206}
        \caption*{$B_{20}*$}
    \end{subfigure}
    \begin{subfigure}[t]{0.15\textwidth}
        \centering
        \includegraphics[height=0.75in]{213}
        \caption*{$B_{21}$}
    \end{subfigure}
    \begin{subfigure}[t]{0.15\textwidth}
        \centering
        \includegraphics[height=0.75in]{244}
        \caption*{$B_{22}*$}
    \end{subfigure}
    \begin{subfigure}[t]{0.15\textwidth}
        \centering
        \includegraphics[height=0.75in]{249}
        \caption*{$B_{23}*$}
    \end{subfigure}
    \begin{subfigure}[t]{0.15\textwidth}
        \centering
        \includegraphics[height=0.75in]{256}
        \caption*{$B_{24}*$}
    \end{subfigure}
\end{figure}
\begin{figure}[h!]
    \centering
    \begin{subfigure}[t]{0.15\textwidth}
        \centering
        \includegraphics[height=0.75in]{259}
        \caption*{$B_{25}*$}
    \end{subfigure}
    \begin{subfigure}[t]{0.15\textwidth}
        \centering
        \includegraphics[height=0.75in]{260}
        \caption*{$B_{26}*$}
    \end{subfigure}
    \begin{subfigure}[t]{0.15\textwidth}
        \centering
        \includegraphics[height=0.75in]{265}
        \caption*{$B_{27}*$}
    \end{subfigure}
    \begin{subfigure}[t]{0.15\textwidth}
        \centering
        \includegraphics[height=0.75in]{267}
        \caption*{$B_{28}*$}
    \end{subfigure}
    \begin{subfigure}[t]{0.15\textwidth}
        \centering
        \includegraphics[height=0.75in]{269}
        \caption*{$B_{29}*$}
    \end{subfigure}
    \begin{subfigure}[t]{0.15\textwidth}
        \centering
        \includegraphics[height=0.75in]{270}
        \caption*{$B_{30}$}
    \end{subfigure}
\end{figure}
\begin{figure}[h!]
    \centering
    \begin{subfigure}[t]{0.15\textwidth}
        \centering
        \includegraphics[height=0.75in]{271}
        \caption*{$B_{31}*$}
    \end{subfigure}
    \begin{subfigure}[t]{0.15\textwidth}
        \centering
        \includegraphics[height=0.75in]{272}
        \caption*{$B_{32}$}
    \end{subfigure}
    \begin{subfigure}[t]{0.15\textwidth}
        \centering
        \includegraphics[height=0.75in]{273}
        \caption*{$B_{33}*$}
    \end{subfigure}
    \begin{subfigure}[t]{0.15\textwidth}
        \centering
        \includegraphics[height=0.75in]{274}
        \caption*{$B_{34}$}
    \end{subfigure}
    \begin{subfigure}[t]{0.15\textwidth}
        \centering
        \includegraphics[height=0.75in]{275}
        \caption*{$B_{35}$}
    \end{subfigure}
    \begin{subfigure}[t]{0.15\textwidth}
        \centering
        \includegraphics[height=0.75in]{276}
        \caption*{$B_{36}$}
    \end{subfigure}
\end{figure}
\begin{figure}[h!]
    \centering
    \begin{subfigure}[t]{0.15\textwidth}
        \centering
        \includegraphics[height=0.75in]{283}
        \caption*{$B_{37}$}
    \end{subfigure}
    \begin{subfigure}[t]{0.15\textwidth}
        \centering
        \includegraphics[height=0.75in]{284}
        \caption*{$B_{38}$}
    \end{subfigure}
    \begin{subfigure}[t]{0.15\textwidth}
        \centering
        \includegraphics[height=0.75in]{285}
        \caption*{$B_{39}$}
    \end{subfigure}
    \begin{subfigure}[t]{0.15\textwidth}
        \centering
        \includegraphics[height=0.75in]{286}
        \caption*{$B_{40}$}
    \end{subfigure}
    \begin{subfigure}[t]{0.15\textwidth}
        \centering
        \includegraphics[height=0.75in]{287}
        \caption*{$B_{41}$}
    \end{subfigure}
    \begin{subfigure}[t]{0.15\textwidth}
        \centering
        \includegraphics[height=0.75in]{288}
        \caption*{$B_{42}$}
    \end{subfigure}
\end{figure}
\begin{figure}[h!]
    \centering
    \begin{subfigure}[t]{0.15\textwidth}
        \centering
        \includegraphics[height=0.75in]{289}
        \caption*{$B_{43}$}
    \end{subfigure}
    \begin{subfigure}[t]{0.15\textwidth}
        \centering
        \includegraphics[height=0.75in]{290}
        \caption*{$B_{44}$}
    \end{subfigure}
    \begin{subfigure}[t]{0.15\textwidth}
        \centering
        \includegraphics[height=0.75in]{297}
        \caption*{$B_{45}$}
    \end{subfigure}
\end{figure}

\FloatBarrier

\appendix{Appendix C: connected graphs covered by cliques of five \& three}

\FloatBarrier

\begin{figure}[h!]
    \centering
    \begin{subfigure}[t]{0.15\textwidth}
        \centering
        \includegraphics[height=0.75in]{3}
        \caption*{$C_{1}$}
    \end{subfigure}
    \begin{subfigure}[t]{0.15\textwidth}
        \centering
        \includegraphics[height=0.75in]{5}
        \caption*{$C_{2}$}
    \end{subfigure}
    \begin{subfigure}[t]{0.15\textwidth}
        \centering
        \includegraphics[height=0.75in]{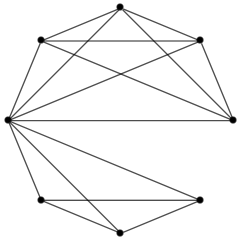}
        \caption*{$C_{3}*$}
    \end{subfigure}
    \begin{subfigure}[t]{0.15\textwidth}
        \centering
        \includegraphics[height=0.75in]{9}
        \caption*{$C_{4}$}
    \end{subfigure}
    \begin{subfigure}[t]{0.15\textwidth}
        \centering
        \includegraphics[height=0.75in]{10}
        \caption*{$C_{5}$}
    \end{subfigure}
    \begin{subfigure}[t]{0.15\textwidth}
        \centering
        \includegraphics[height=0.75in]{13}
        \caption*{$C_{6}$}
    \end{subfigure}
\end{figure}
\begin{figure}[h!]
    \centering
    \begin{subfigure}[t]{0.15\textwidth}
        \centering
        \includegraphics[height=0.75in]{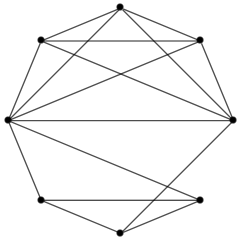}
        \caption*{$C_{7}*$}
    \end{subfigure}
    \begin{subfigure}[t]{0.15\textwidth}
        \centering
        \includegraphics[height=0.75in]{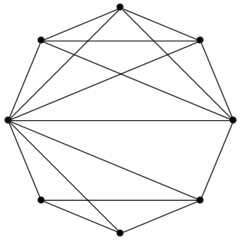}
        \caption*{$C_{8}*$}
    \end{subfigure}
    \begin{subfigure}[t]{0.15\textwidth}
        \centering
        \includegraphics[height=0.75in]{19}
        \caption*{$C_{9}$}
    \end{subfigure}
    \begin{subfigure}[t]{0.15\textwidth}
        \centering
        \includegraphics[height=0.75in]{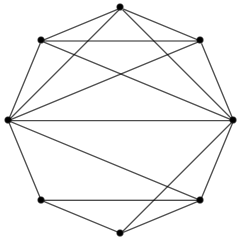}
        \caption*{$C_{10}*$}
    \end{subfigure}
    \begin{subfigure}[t]{0.15\textwidth}
        \centering
        \includegraphics[height=0.75in]{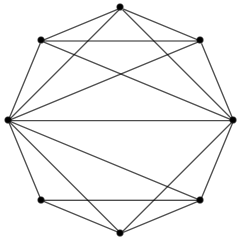}
        \caption*{$C_{11}*$}
    \end{subfigure}
    \begin{subfigure}[t]{0.15\textwidth}
        \centering
        \includegraphics[height=0.75in]{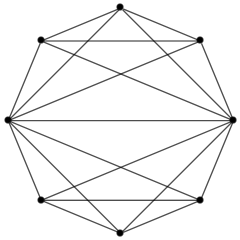}
        \caption*{$C_{12}*$}
    \end{subfigure}
\end{figure}
\begin{figure}[h!]
    \centering
    \begin{subfigure}[t]{0.15\textwidth}
        \centering
        \includegraphics[height=0.75in]{28}
        \caption*{$C_{13}$}
    \end{subfigure}
    \begin{subfigure}[t]{0.15\textwidth}
        \centering
        \includegraphics[height=0.75in]{29}
        \caption*{$C_{14}$}
    \end{subfigure}
    \begin{subfigure}[t]{0.15\textwidth}
        \centering
        \includegraphics[height=0.75in]{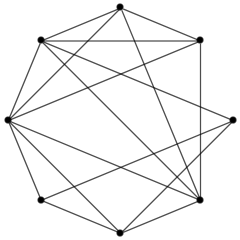}
        \caption*{$C_{15}*$}
    \end{subfigure}
    \begin{subfigure}[t]{0.15\textwidth}
        \centering
        \includegraphics[height=0.75in]{34}
        \caption*{$C_{16}$}
    \end{subfigure}
    \begin{subfigure}[t]{0.15\textwidth}
        \centering
        \includegraphics[height=0.75in]{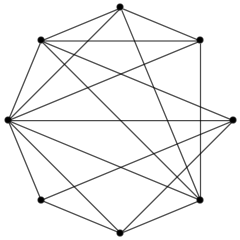}
        \caption*{$C_{17}*$}
    \end{subfigure}
    \begin{subfigure}[t]{0.15\textwidth}
        \centering
        \includegraphics[height=0.75in]{37}
        \caption*{$C_{18}$}
    \end{subfigure}
\end{figure}
\begin{figure}[h!]
    \centering
    \begin{subfigure}[t]{0.15\textwidth}
        \centering
        \includegraphics[height=0.75in]{40}
        \caption*{$C_{19}$}
    \end{subfigure}
    \begin{subfigure}[t]{0.15\textwidth}
        \centering
        \includegraphics[height=0.75in]{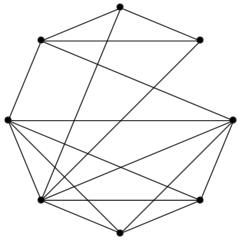}
        \caption*{$C_{20}*$}
    \end{subfigure}
    \begin{subfigure}[t]{0.15\textwidth}
        \centering
        \includegraphics[height=0.75in]{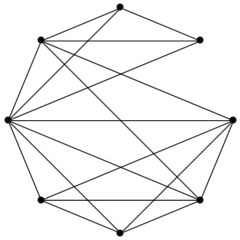}
        \caption*{$C_{21}*$}
    \end{subfigure}
    \begin{subfigure}[t]{0.15\textwidth}
        \centering
        \includegraphics[height=0.75in]{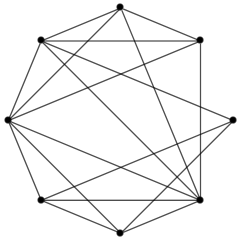}
        \caption*{$C_{22}*$}
    \end{subfigure}
    \begin{subfigure}[t]{0.15\textwidth}
        \centering
        \includegraphics[height=0.75in]{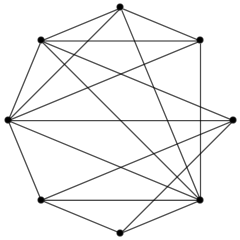}
        \caption*{$C_{23}*$}
    \end{subfigure}
    \begin{subfigure}[t]{0.15\textwidth}
        \centering
        \includegraphics[height=0.75in]{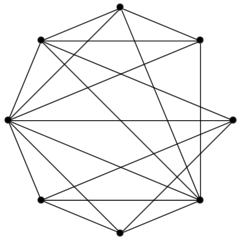}
        \caption*{$C_{24}*$}
    \end{subfigure}
\end{figure}
\begin{figure}[h!]
    \centering
    \begin{subfigure}[t]{0.15\textwidth}
        \centering
        \includegraphics[height=0.75in]{52}
        \caption*{$C_{25}$}
    \end{subfigure}
    \begin{subfigure}[t]{0.15\textwidth}
        \centering
        \includegraphics[height=0.75in]{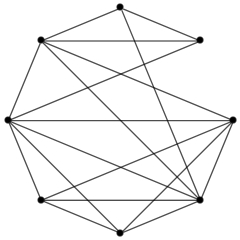}
        \caption*{$C_{26}*$}
    \end{subfigure}
    \begin{subfigure}[t]{0.15\textwidth}
        \centering
        \includegraphics[height=0.75in]{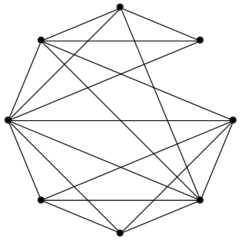}
        \caption*{$C_{27}*$}
    \end{subfigure}
    \begin{subfigure}[t]{0.15\textwidth}
        \centering
        \includegraphics[height=0.75in]{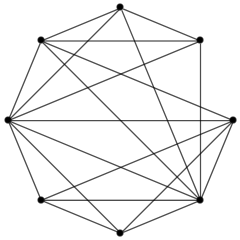}
        \caption*{$C_{28}*$}
    \end{subfigure}
    \begin{subfigure}[t]{0.15\textwidth}
        \centering
        \includegraphics[height=0.75in]{58}
        \caption*{$C_{29}$}
    \end{subfigure}
    \begin{subfigure}[t]{0.15\textwidth}
        \centering
        \includegraphics[height=0.75in]{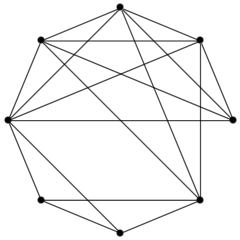}
        \caption*{$C_{30}*$}
    \end{subfigure}
\end{figure}
\begin{figure}[h!]
    \centering
    \begin{subfigure}[t]{0.15\textwidth}
        \centering
        \includegraphics[height=0.75in]{62}
        \caption*{$C_{31}$}
    \end{subfigure}
    \begin{subfigure}[t]{0.15\textwidth}
        \centering
        \includegraphics[height=0.75in]{64}
        \caption*{$C_{32}$}
    \end{subfigure}
    \begin{subfigure}[t]{0.15\textwidth}
        \centering
        \includegraphics[height=0.75in]{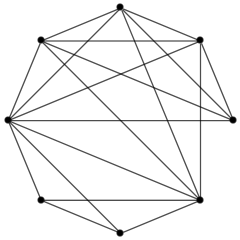}
        \caption*{$C_{33}*$}
    \end{subfigure}
    \begin{subfigure}[t]{0.15\textwidth}
        \centering
        \includegraphics[height=0.75in]{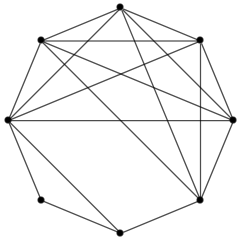}
        \caption*{$C_{34}*$}
    \end{subfigure}
    \begin{subfigure}[t]{0.15\textwidth}
        \centering
        \includegraphics[height=0.75in]{70}
        \caption*{$C_{35}$}
    \end{subfigure}
    \begin{subfigure}[t]{0.15\textwidth}
        \centering
        \includegraphics[height=0.75in]{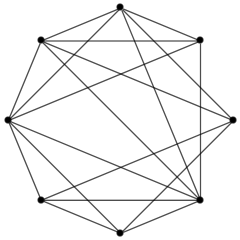}
        \caption*{$C_{36}*$}
    \end{subfigure}
\end{figure}
\begin{figure}[h!]
    \centering
    \begin{subfigure}[t]{0.15\textwidth}
        \centering
        \includegraphics[height=0.75in]{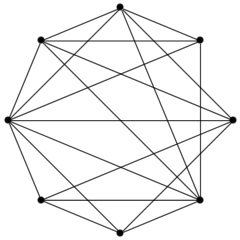}
        \caption*{$C_{37}*$}
    \end{subfigure}
    \begin{subfigure}[t]{0.15\textwidth}
        \centering
        \includegraphics[height=0.75in]{76}
        \caption*{$C_{38}$}
    \end{subfigure}
    \begin{subfigure}[t]{0.15\textwidth}
        \centering
        \includegraphics[height=0.75in]{77}
        \caption*{$C_{39}$}
    \end{subfigure}
    \begin{subfigure}[t]{0.15\textwidth}
        \centering
        \includegraphics[height=0.75in]{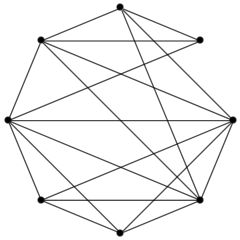}
        \caption*{$C_{40}*$}
    \end{subfigure}
    \begin{subfigure}[t]{0.15\textwidth}
        \centering
        \includegraphics[height=0.75in]{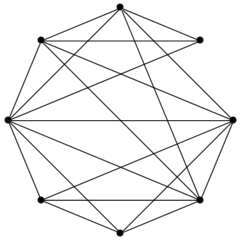}
        \caption*{$C_{41}*$}
    \end{subfigure}
    \begin{subfigure}[t]{0.15\textwidth}
        \centering
        \includegraphics[height=0.75in]{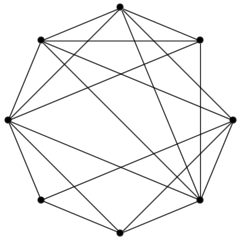}
        \caption*{$C_{42}*$}
    \end{subfigure}
\end{figure}
\begin{figure}[h!]
    \centering
    \begin{subfigure}[t]{0.15\textwidth}
        \centering
        \includegraphics[height=0.75in]{82}
        \caption*{$C_{43}$}
    \end{subfigure}
    \begin{subfigure}[t]{0.15\textwidth}
        \centering
        \includegraphics[height=0.75in]{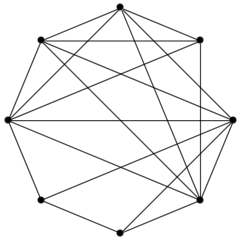}
        \caption*{$C_{44}*$}
    \end{subfigure}
    \begin{subfigure}[t]{0.15\textwidth}
        \centering
        \includegraphics[height=0.75in]{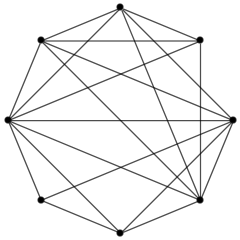}
        \caption*{$C_{45}*$}
    \end{subfigure}
    \begin{subfigure}[t]{0.15\textwidth}
        \centering
        \includegraphics[height=0.75in]{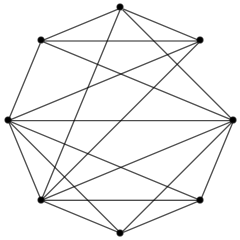}
        \caption*{$C_{46}*$}
    \end{subfigure}
    \begin{subfigure}[t]{0.15\textwidth}
        \centering
        \includegraphics[height=0.75in]{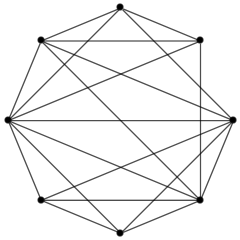}
        \caption*{$C_{47}*$}
    \end{subfigure}
    \begin{subfigure}[t]{0.15\textwidth}
        \centering
        \includegraphics[height=0.75in]{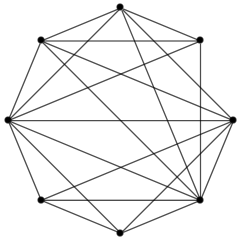}
        \caption*{$C_{48}*$}
    \end{subfigure}
\end{figure}
\begin{figure}[h!]
    \centering
    \begin{subfigure}[t]{0.15\textwidth}
        \centering
        \includegraphics[height=0.75in]{90}
        \caption*{$C_{49}$}
    \end{subfigure}
    \begin{subfigure}[t]{0.15\textwidth}
        \centering
        \includegraphics[height=0.75in]{91}
        \caption*{$C_{50}$}
    \end{subfigure}
    \begin{subfigure}[t]{0.15\textwidth}
        \centering
        \includegraphics[height=0.75in]{94}
        \caption*{$C_{51}$}
    \end{subfigure}
    \begin{subfigure}[t]{0.15\textwidth}
        \centering
        \includegraphics[height=0.75in]{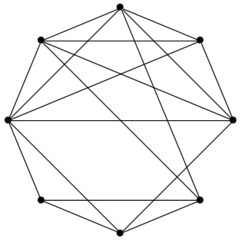}
        \caption*{$C_{52}*$}
    \end{subfigure}
    \begin{subfigure}[t]{0.15\textwidth}
        \centering
        \includegraphics[height=0.75in]{99}
        \caption*{$C_{53}$}
    \end{subfigure}
    \begin{subfigure}[t]{0.15\textwidth}
        \centering
        \includegraphics[height=0.75in]{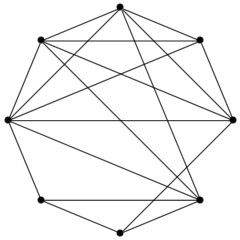}
        \caption*{$C_{54}*$}
    \end{subfigure}
\end{figure}
\begin{figure}[h!]
    \centering
    \begin{subfigure}[t]{0.15\textwidth}
        \centering
        \includegraphics[height=0.75in]{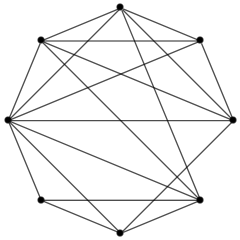}
        \caption*{$C_{55}*$}
    \end{subfigure}
    \begin{subfigure}[t]{0.15\textwidth}
        \centering
        \includegraphics[height=0.75in]{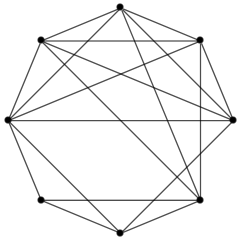}
        \caption*{$C_{56}*$}
    \end{subfigure}
    \begin{subfigure}[t]{0.15\textwidth}
        \centering
        \includegraphics[height=0.75in]{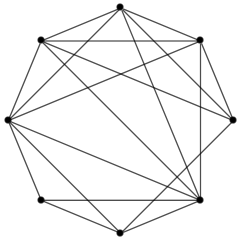}
        \caption*{$C_{57}*$}
    \end{subfigure}
    \begin{subfigure}[t]{0.15\textwidth}
        \centering
        \includegraphics[height=0.75in]{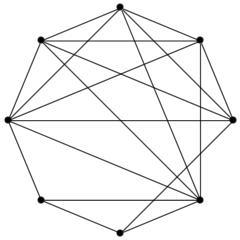}
        \caption*{$C_{58}*$}
    \end{subfigure}
    \begin{subfigure}[t]{0.15\textwidth}
        \centering
        \includegraphics[height=0.75in]{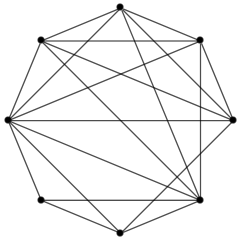}
        \caption*{$C_{59}*$}
    \end{subfigure}
    \begin{subfigure}[t]{0.15\textwidth}
        \centering
        \includegraphics[height=0.75in]{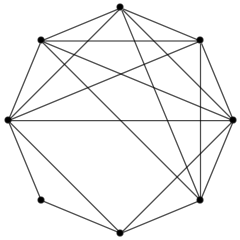}
        \caption*{$C_{60}*$}
    \end{subfigure}
\end{figure}
\begin{figure}[h!]
    \centering
    \begin{subfigure}[t]{0.15\textwidth}
        \centering
        \includegraphics[height=0.75in]{113}
        \caption*{$C_{61}$}
    \end{subfigure}
    \begin{subfigure}[t]{0.15\textwidth}
        \centering
        \includegraphics[height=0.75in]{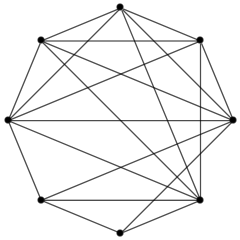}
        \caption*{$C_{62}*$}
    \end{subfigure}
    \begin{subfigure}[t]{0.15\textwidth}
        \centering
        \includegraphics[height=0.75in]{116}
        \caption*{$C_{63}$}
    \end{subfigure}
    \begin{subfigure}[t]{0.15\textwidth}
        \centering
        \includegraphics[height=0.75in]{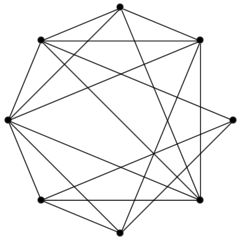}
        \caption*{$C_{64}*$}
    \end{subfigure}
    \begin{subfigure}[t]{0.15\textwidth}
        \centering
        \includegraphics[height=0.75in]{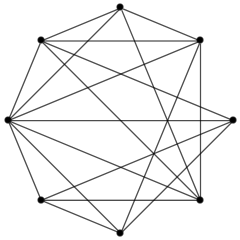}
        \caption*{$C_{65}*$}
    \end{subfigure}
    \begin{subfigure}[t]{0.15\textwidth}
        \centering
        \includegraphics[height=0.75in]{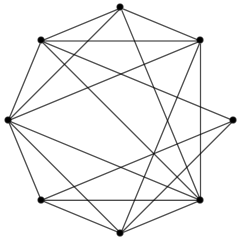}
        \caption*{$C_{66}*$}
    \end{subfigure}
\end{figure}
\begin{figure}[h!]
    \centering
    \begin{subfigure}[t]{0.15\textwidth}
        \centering
        \includegraphics[height=0.75in]{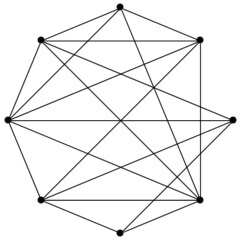}
        \caption*{$C_{67}*$}
    \end{subfigure}
    \begin{subfigure}[t]{0.15\textwidth}
        \centering
        \includegraphics[height=0.75in]{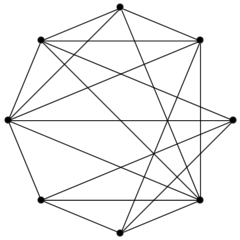}
        \caption*{$C_{68}*$}
    \end{subfigure}
    \begin{subfigure}[t]{0.15\textwidth}
        \centering
        \includegraphics[height=0.75in]{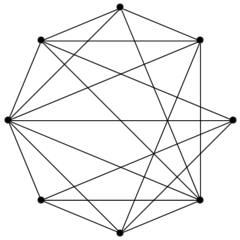}
        \caption*{$C_{69}*$}
    \end{subfigure}
    \begin{subfigure}[t]{0.15\textwidth}
        \centering
        \includegraphics[height=0.75in]{136}
        \caption*{$C_{70}$}
    \end{subfigure}
    \begin{subfigure}[t]{0.15\textwidth}
        \centering
        \includegraphics[height=0.75in]{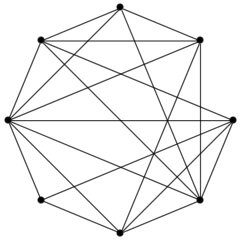}
        \caption*{$C_{71}*$}
    \end{subfigure}
    \begin{subfigure}[t]{0.15\textwidth}
        \centering
        \includegraphics[height=0.75in]{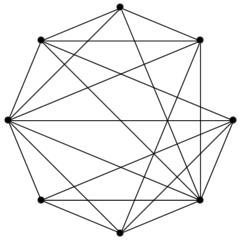}
        \caption*{$C_{72}*$}
    \end{subfigure}
\end{figure}
\begin{figure}[h!]
    \centering
    \begin{subfigure}[t]{0.15\textwidth}
        \centering
        \includegraphics[height=0.75in]{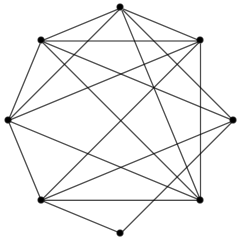}
        \caption*{$C_{73}*$}
    \end{subfigure}
    \begin{subfigure}[t]{0.15\textwidth}
        \centering
        \includegraphics[height=0.75in]{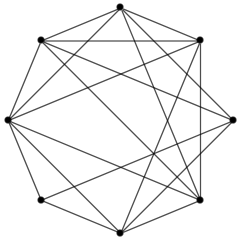}
        \caption*{$C_{74}*$}
    \end{subfigure}
    \begin{subfigure}[t]{0.15\textwidth}
        \centering
        \includegraphics[height=0.75in]{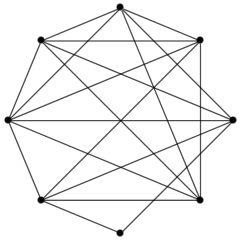}
        \caption*{$C_{75}*$}
    \end{subfigure}
    \begin{subfigure}[t]{0.15\textwidth}
        \centering
        \includegraphics[height=0.75in]{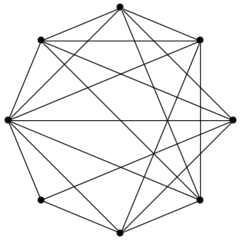}
        \caption*{$C_{76}*$}
    \end{subfigure}
    \begin{subfigure}[t]{0.15\textwidth}
        \centering
        \includegraphics[height=0.75in]{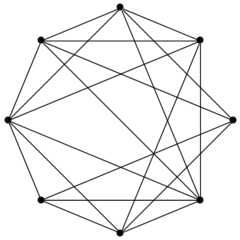}
        \caption*{$C_{77}*$}
    \end{subfigure}
    \begin{subfigure}[t]{0.15\textwidth}
        \centering
        \includegraphics[height=0.75in]{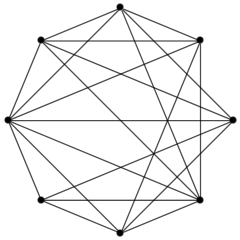}
        \caption*{$C_{78}*$}
    \end{subfigure}
\end{figure}
\begin{figure}[h!]
    \centering
    \begin{subfigure}[t]{0.15\textwidth}
        \centering
        \includegraphics[height=0.75in]{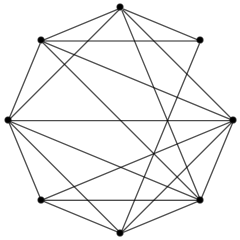}
        \caption*{$C_{79}*$}
    \end{subfigure}
    \begin{subfigure}[t]{0.15\textwidth}
        \centering
        \includegraphics[height=0.75in]{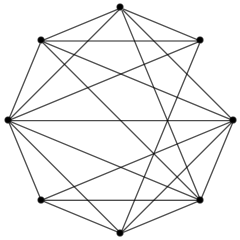}
        \caption*{$C_{80}*$}
    \end{subfigure}
    \begin{subfigure}[t]{0.15\textwidth}
        \centering
        \includegraphics[height=0.75in]{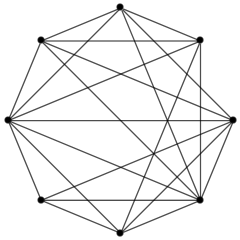}
        \caption*{$C_{81}*$}
    \end{subfigure}
    \begin{subfigure}[t]{0.15\textwidth}
        \centering
        \includegraphics[height=0.75in]{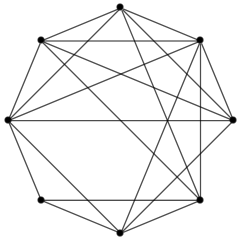}
        \caption*{$C_{82}*$}
    \end{subfigure}
    \begin{subfigure}[t]{0.15\textwidth}
        \centering
        \includegraphics[height=0.75in]{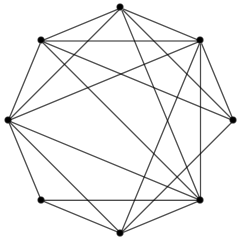}
        \caption*{$C_{83}*$}
    \end{subfigure}
    \begin{subfigure}[t]{0.15\textwidth}
        \centering
        \includegraphics[height=0.75in]{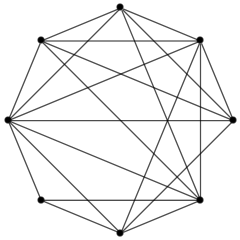}
        \caption*{$C_{84}*$}
    \end{subfigure}
\end{figure}
\begin{figure}[h!]
    \centering
    \begin{subfigure}[t]{0.15\textwidth}
        \centering
        \includegraphics[height=0.75in]{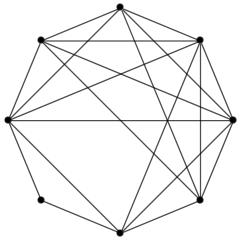}
        \caption*{$C_{85}*$}
    \end{subfigure}
    \begin{subfigure}[t]{0.15\textwidth}
        \centering
        \includegraphics[height=0.75in]{162}
        \caption*{$C_{86}$}
    \end{subfigure}
    \begin{subfigure}[t]{0.15\textwidth}
        \centering
        \includegraphics[height=0.75in]{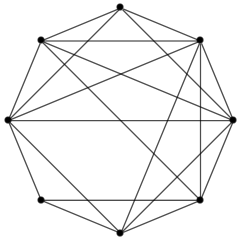}
        \caption*{$C_{87}*$}
    \end{subfigure}
    \begin{subfigure}[t]{0.15\textwidth}
        \centering
        \includegraphics[height=0.75in]{167}
        \caption*{$C_{88}$}
    \end{subfigure}
    \begin{subfigure}[t]{0.15\textwidth}
        \centering
        \includegraphics[height=0.75in]{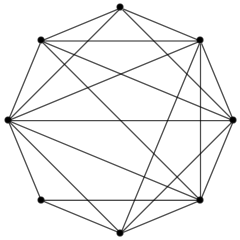}
        \caption*{$C_{89}*$}
    \end{subfigure}
    \begin{subfigure}[t]{0.15\textwidth}
        \centering
        \includegraphics[height=0.75in]{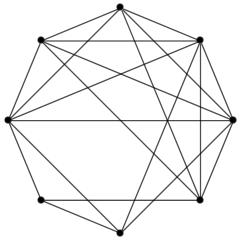}
        \caption*{$C_{90}*$}
    \end{subfigure}
\end{figure}
\begin{figure}[h!]
    \centering
    \begin{subfigure}[t]{0.15\textwidth}
        \centering
        \includegraphics[height=0.75in]{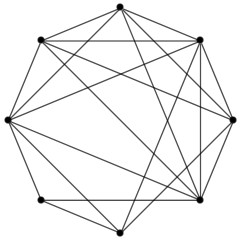}
        \caption*{$C_{91}*$}
    \end{subfigure}
    \begin{subfigure}[t]{0.15\textwidth}
        \centering
        \includegraphics[height=0.75in]{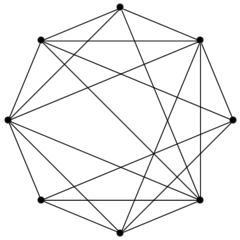}
        \caption*{$C_{92}*$}
    \end{subfigure}
    \begin{subfigure}[t]{0.15\textwidth}
        \centering
        \includegraphics[height=0.75in]{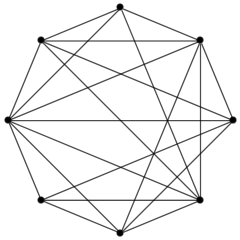}
        \caption*{$C_{93}*$}
    \end{subfigure}
    \begin{subfigure}[t]{0.15\textwidth}
        \centering
        \includegraphics[height=0.75in]{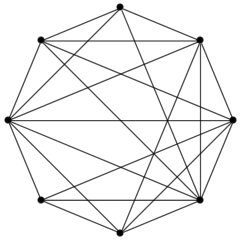}
        \caption*{$C_{94}*$}
    \end{subfigure}
    \begin{subfigure}[t]{0.15\textwidth}
        \centering
        \includegraphics[height=0.75in]{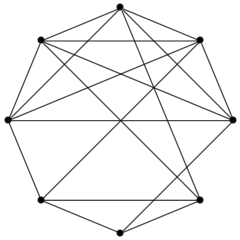}
        \caption*{$C_{95}*$}
    \end{subfigure}
    \begin{subfigure}[t]{0.15\textwidth}
        \centering
        \includegraphics[height=0.75in]{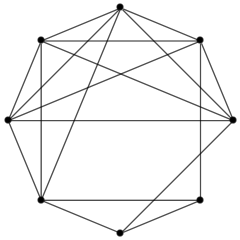}
        \caption*{$C_{96}*$}
    \end{subfigure}
\end{figure}
\begin{figure}[h!]
    \centering
    \begin{subfigure}[t]{0.15\textwidth}
        \centering
        \includegraphics[height=0.75in]{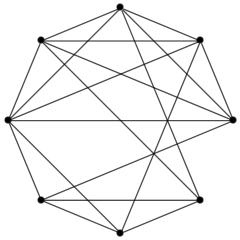}
        \caption*{$C_{97}*$}
    \end{subfigure}
    \begin{subfigure}[t]{0.15\textwidth}
        \centering
        \includegraphics[height=0.75in]{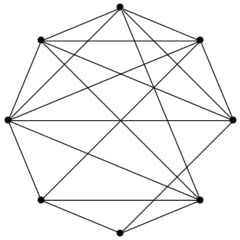}
        \caption*{$C_{98}*$}
    \end{subfigure}
    \begin{subfigure}[t]{0.15\textwidth}
        \centering
        \includegraphics[height=0.75in]{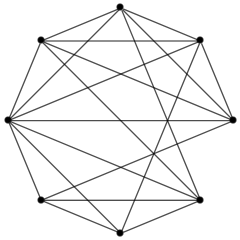}
        \caption*{$C_{99}*$}
    \end{subfigure}
    \begin{subfigure}[t]{0.15\textwidth}
        \centering
        \includegraphics[height=0.75in]{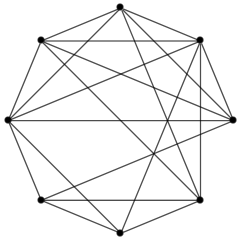}
        \caption*{$C_{100}*$}
    \end{subfigure}
    \begin{subfigure}[t]{0.15\textwidth}
        \centering
        \includegraphics[height=0.75in]{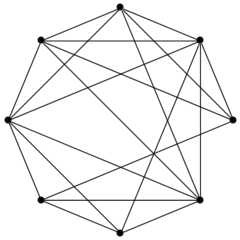}
        \caption*{$C_{101}*$}
    \end{subfigure}
    \begin{subfigure}[t]{0.15\textwidth}
        \centering
        \includegraphics[height=0.75in]{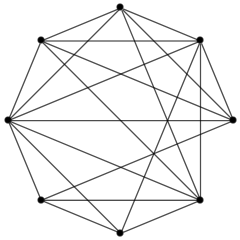}
        \caption*{$C_{102}*$}
    \end{subfigure}
\end{figure}
\begin{figure}[h!]
    \centering
    \begin{subfigure}[t]{0.15\textwidth}
        \centering
        \includegraphics[height=0.75in]{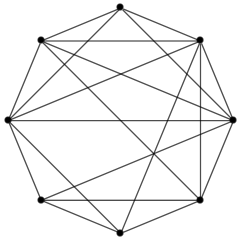}
        \caption*{$C_{103}*$}
    \end{subfigure}
    \begin{subfigure}[t]{0.15\textwidth}
        \centering
        \includegraphics[height=0.75in]{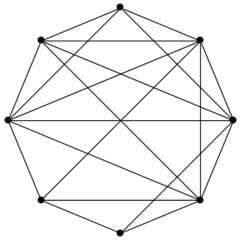}
        \caption*{$C_{104}*$}
    \end{subfigure}
    \begin{subfigure}[t]{0.15\textwidth}
        \centering
        \includegraphics[height=0.75in]{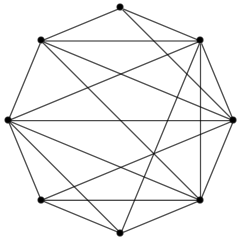}
        \caption*{$C_{105}*$}
    \end{subfigure}
    \begin{subfigure}[t]{0.15\textwidth}
        \centering
        \includegraphics[height=0.75in]{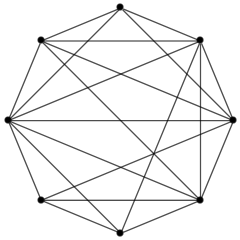}
        \caption*{$C_{106}*$}
    \end{subfigure}
    \begin{subfigure}[t]{0.15\textwidth}
        \centering
        \includegraphics[height=0.75in]{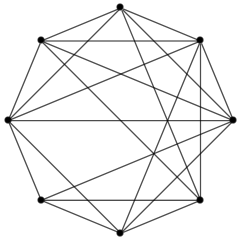}
        \caption*{$C_{107}*$}
    \end{subfigure}
    \begin{subfigure}[t]{0.15\textwidth}
        \centering
        \includegraphics[height=0.75in]{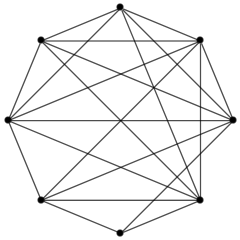}
        \caption*{$C_{108}*$}
    \end{subfigure}
\end{figure}
\begin{figure}[h!]
    \centering
    \begin{subfigure}[t]{0.15\textwidth}
        \centering
        \includegraphics[height=0.75in]{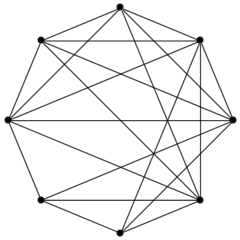}
        \caption*{$C_{109}*$}
    \end{subfigure}
    \begin{subfigure}[t]{0.15\textwidth}
        \centering
        \includegraphics[height=0.75in]{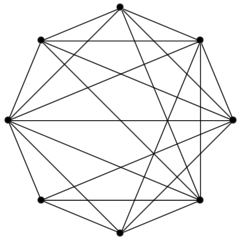}
        \caption*{$C_{110}*$}
    \end{subfigure}
    \begin{subfigure}[t]{0.15\textwidth}
        \centering
        \includegraphics[height=0.75in]{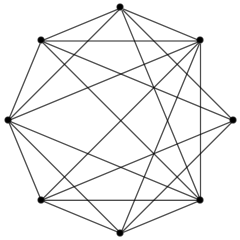}
        \caption*{$C_{111}*$}
    \end{subfigure}
    \begin{subfigure}[t]{0.15\textwidth}
        \centering
        \includegraphics[height=0.75in]{210}
        \caption*{$C_{112}$}
    \end{subfigure}
    \begin{subfigure}[t]{0.15\textwidth}
        \centering
        \includegraphics[height=0.75in]{212}
        \caption*{$C_{113}$}
    \end{subfigure}
    \begin{subfigure}[t]{0.15\textwidth}
        \centering
        \includegraphics[height=0.75in]{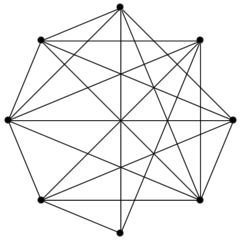}
        \caption*{$C_{114}*$}
    \end{subfigure}
\end{figure}
\begin{figure}[h!]
    \centering
    \begin{subfigure}[t]{0.15\textwidth}
        \centering
        \includegraphics[height=0.75in]{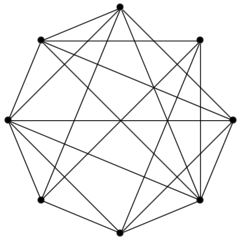}
        \caption*{$C_{115}*$}
    \end{subfigure}
    \begin{subfigure}[t]{0.15\textwidth}
        \centering
        \includegraphics[height=0.75in]{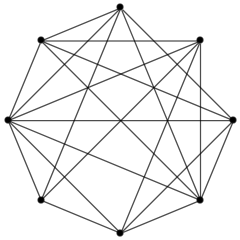}
        \caption*{$C_{116}*$}
    \end{subfigure}
    \begin{subfigure}[t]{0.15\textwidth}
        \centering
        \includegraphics[height=0.75in]{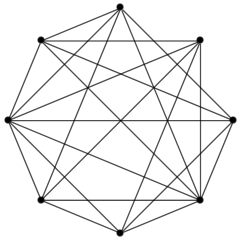}
        \caption*{$C_{117}*$}
    \end{subfigure}
    \begin{subfigure}[t]{0.15\textwidth}
        \centering
        \includegraphics[height=0.75in]{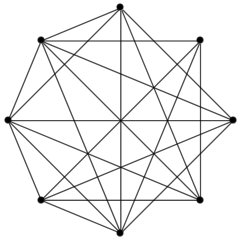}
        \caption*{$C_{118}*$}
    \end{subfigure}
    \begin{subfigure}[t]{0.15\textwidth}
        \centering
        \includegraphics[height=0.75in]{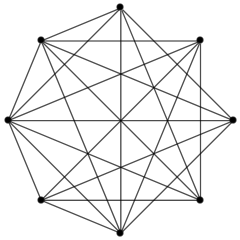}
        \caption*{$C_{119}*$}
    \end{subfigure}
    \begin{subfigure}[t]{0.15\textwidth}
        \centering
        \includegraphics[height=0.75in]{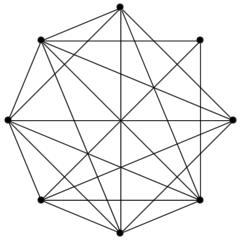}
        \caption*{$C_{120}*$}
    \end{subfigure}
\end{figure}
\begin{figure}[h!]
    \centering
    \begin{subfigure}[t]{0.15\textwidth}
        \centering
        \includegraphics[height=0.75in]{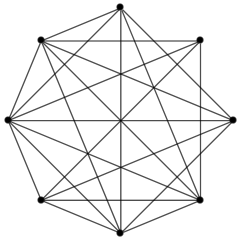}
        \caption*{$C_{121}*$}
    \end{subfigure}
    \begin{subfigure}[t]{0.15\textwidth}
        \centering
        \includegraphics[height=0.75in]{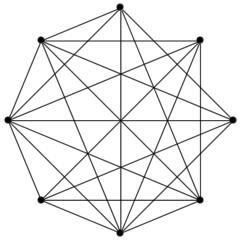}
        \caption*{$C_{122}*$}
    \end{subfigure}
    \begin{subfigure}[t]{0.15\textwidth}
        \centering
        \includegraphics[height=0.75in]{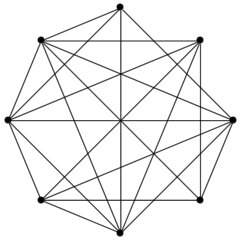}
        \caption*{$C_{123}*$}
    \end{subfigure}
    \begin{subfigure}[t]{0.15\textwidth}
        \centering
        \includegraphics[height=0.75in]{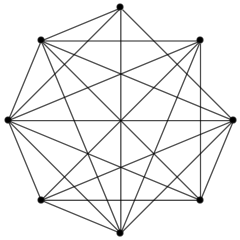}
        \caption*{$C_{124}*$}
    \end{subfigure}
    \begin{subfigure}[t]{0.15\textwidth}
        \centering
        \includegraphics[height=0.75in]{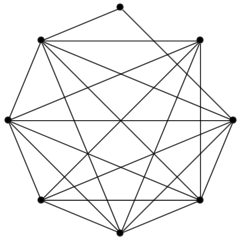}
        \caption*{$C_{125}*$}
    \end{subfigure}
    \begin{subfigure}[t]{0.15\textwidth}
        \centering
        \includegraphics[height=0.75in]{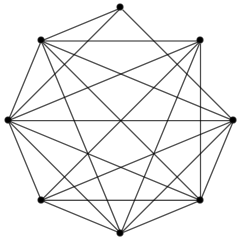}
        \caption*{$C_{126}*$}
    \end{subfigure}
\end{figure}
\begin{figure}[h!]
    \centering
    \begin{subfigure}[t]{0.15\textwidth}
        \centering
        \includegraphics[height=0.75in]{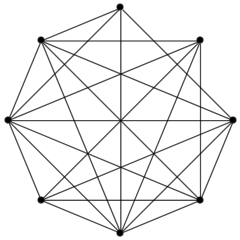}
        \caption*{$C_{127}*$}
    \end{subfigure}
    \begin{subfigure}[t]{0.15\textwidth}
        \centering
        \includegraphics[height=0.75in]{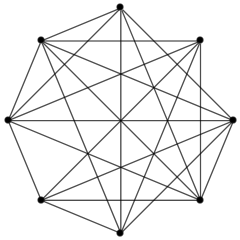}
        \caption*{$C_{128}*$}
    \end{subfigure}
    \begin{subfigure}[t]{0.15\textwidth}
        \centering
        \includegraphics[height=0.75in]{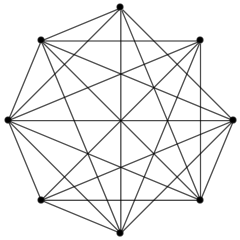}
        \caption*{$C_{129}*$}
    \end{subfigure}
    \begin{subfigure}[t]{0.15\textwidth}
        \centering
        \includegraphics[height=0.75in]{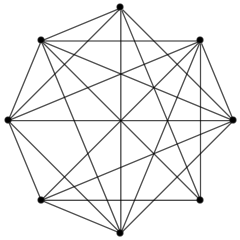}
        \caption*{$C_{130}*$}
    \end{subfigure}
    \begin{subfigure}[t]{0.15\textwidth}
        \centering
        \includegraphics[height=0.75in]{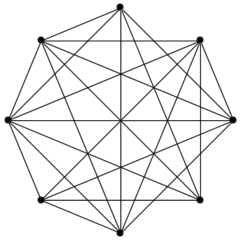}
        \caption*{$C_{131}*$}
    \end{subfigure}
    \begin{subfigure}[t]{0.15\textwidth}
        \centering
        \includegraphics[height=0.75in]{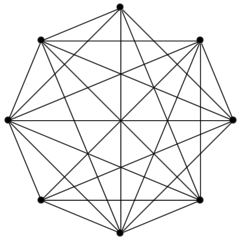}
        \caption*{$C_{132}*$}
    \end{subfigure}
\end{figure}
\begin{figure}[h!]
    \centering
    \begin{subfigure}[t]{0.15\textwidth}
        \centering
        \includegraphics[height=0.75in]{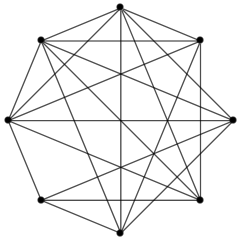}
        \caption*{$C_{133}*$}
    \end{subfigure}
    \begin{subfigure}[t]{0.15\textwidth}
        \centering
        \includegraphics[height=0.75in]{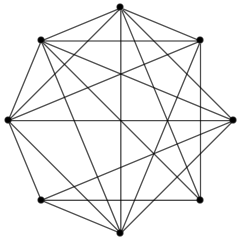}
        \caption*{$C_{134}*$}
    \end{subfigure}
    \begin{subfigure}[t]{0.15\textwidth}
        \centering
        \includegraphics[height=0.75in]{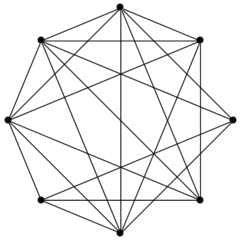}
        \caption*{$C_{135}*$}
    \end{subfigure}
    \begin{subfigure}[t]{0.15\textwidth}
        \centering
        \includegraphics[height=0.75in]{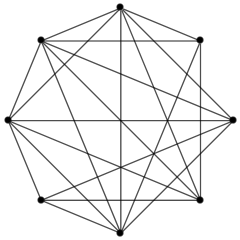}
        \caption*{$C_{136}*$}
    \end{subfigure}
    \begin{subfigure}[t]{0.15\textwidth}
        \centering
        \includegraphics[height=0.75in]{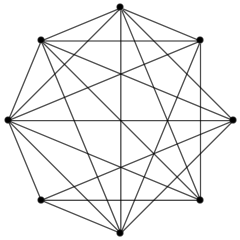}
        \caption*{$C_{137}*$}
    \end{subfigure}
    \begin{subfigure}[t]{0.15\textwidth}
        \centering
        \includegraphics[height=0.75in]{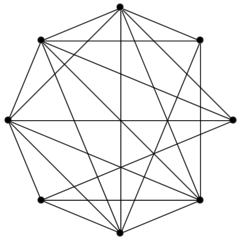}
        \caption*{$C_{138}*$}
    \end{subfigure}
\end{figure}
\begin{figure}[h!]
    \centering
    \begin{subfigure}[t]{0.15\textwidth}
        \centering
        \includegraphics[height=0.75in]{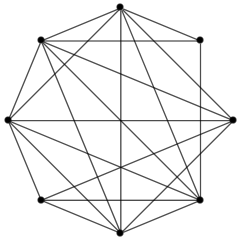}
        \caption*{$C_{139}*$}
    \end{subfigure}
    \begin{subfigure}[t]{0.15\textwidth}
        \centering
        \includegraphics[height=0.75in]{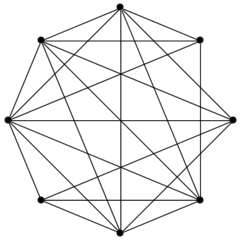}
        \caption*{$C_{140}*$}
    \end{subfigure}
    \begin{subfigure}[t]{0.15\textwidth}
        \centering
        \includegraphics[height=0.75in]{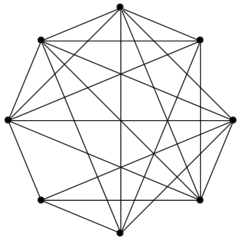}
        \caption*{$C_{141}*$}
    \end{subfigure}
    \begin{subfigure}[t]{0.15\textwidth}
        \centering
        \includegraphics[height=0.75in]{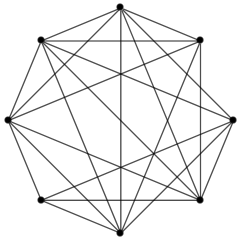}
        \caption*{$C_{142}*$}
    \end{subfigure}
    \begin{subfigure}[t]{0.15\textwidth}
        \centering
        \includegraphics[height=0.75in]{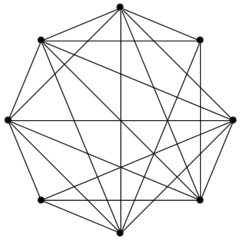}
        \caption*{$C_{143}*$}
    \end{subfigure}
    \begin{subfigure}[t]{0.15\textwidth}
        \centering
        \includegraphics[height=0.75in]{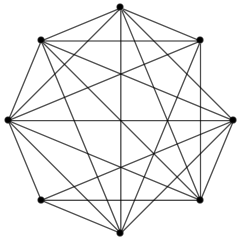}
        \caption*{$C_{144}*$}
    \end{subfigure}
\end{figure}
\begin{figure}[h!]
    \centering
    \begin{subfigure}[t]{0.15\textwidth}
        \centering
        \includegraphics[height=0.75in]{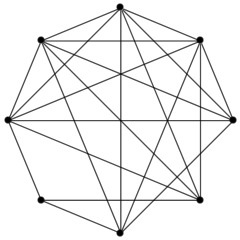}
        \caption*{$C_{145}*$}
    \end{subfigure}
    \begin{subfigure}[t]{0.15\textwidth}
        \centering
        \includegraphics[height=0.75in]{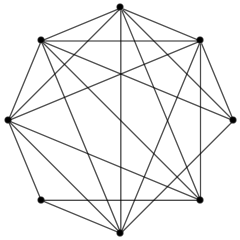}
        \caption*{$C_{146}*$}
    \end{subfigure}
    \begin{subfigure}[t]{0.15\textwidth}
        \centering
        \includegraphics[height=0.75in]{282}
        \caption*{$C_{147}$}
    \end{subfigure}
    \begin{subfigure}[t]{0.15\textwidth}
        \centering
        \includegraphics[height=0.75in]{292}
        \caption*{$C_{148}$}
    \end{subfigure}
    \begin{subfigure}[t]{0.15\textwidth}
        \centering
        \includegraphics[height=0.75in]{293}
        \caption*{$C_{149}$}
    \end{subfigure}
    \begin{subfigure}[t]{0.15\textwidth}
        \centering
        \includegraphics[height=0.75in]{294}
        \caption*{$C_{150}$}
    \end{subfigure}
\end{figure}
\begin{figure}[h!]
    \centering
    \begin{subfigure}[t]{0.15\textwidth}
        \centering
        \includegraphics[height=0.75in]{296}
        \caption*{$C_{151}$}
    \end{subfigure}
\end{figure}

\FloatBarrier
\newpage

\appendix{Appendix D: connected graphs covered by cliques of four \& four}

\begin{figure}[h!]
    \centering
    \begin{subfigure}[t]{0.15\textwidth}
        \centering
        \includegraphics[height=0.75in]{1}
        \caption*{$D_{1}$}
    \end{subfigure}
    \begin{subfigure}[t]{0.15\textwidth}
        \centering
        \includegraphics[height=0.75in]{2}
        \caption*{$D_{2}$}
    \end{subfigure}
    \begin{subfigure}[t]{0.15\textwidth}
        \centering
        \includegraphics[height=0.75in]{4}
        \caption*{$D_{3}$}
    \end{subfigure}
    \begin{subfigure}[t]{0.15\textwidth}
        \centering
        \includegraphics[height=0.75in]{7}
        \caption*{$D_{4}$}
    \end{subfigure}
    \begin{subfigure}[t]{0.15\textwidth}
        \centering
        \includegraphics[height=0.75in]{8}
        \caption*{$D_{5}$}
    \end{subfigure}
    \begin{subfigure}[t]{0.15\textwidth}
        \centering
        \includegraphics[height=0.75in]{11}
        \caption*{$D_{6}$}
    \end{subfigure}
\end{figure}
\begin{figure}[h!]
    \centering
    \begin{subfigure}[t]{0.15\textwidth}
        \centering
        \includegraphics[height=0.75in]{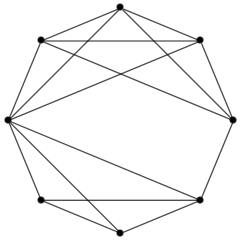}
        \caption*{$D_{7}*$}
    \end{subfigure}
    \begin{subfigure}[t]{0.15\textwidth}
        \centering
        \includegraphics[height=0.75in]{14}
        \caption*{$D_{8}$}
    \end{subfigure}
    \begin{subfigure}[t]{0.15\textwidth}
        \centering
        \includegraphics[height=0.75in]{17}
        \caption*{$D_{9}$}
    \end{subfigure}
    \begin{subfigure}[t]{0.15\textwidth}
        \centering
        \includegraphics[height=0.75in]{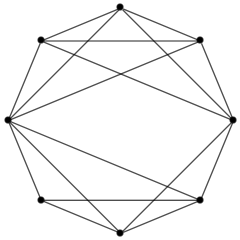}
        \caption*{$D_{10}*$}
    \end{subfigure}
    \begin{subfigure}[t]{0.15\textwidth}
        \centering
        \includegraphics[height=0.75in]{20}
        \caption*{$D_{11}$}
    \end{subfigure}
    \begin{subfigure}[t]{0.15\textwidth}
        \centering
        \includegraphics[height=0.75in]{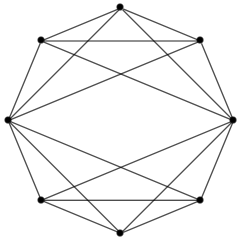}
        \caption*{$D_{12}*$}
    \end{subfigure}
\end{figure}
\begin{figure}[h!]
    \centering
    \begin{subfigure}[t]{0.15\textwidth}
        \centering
        \includegraphics[height=0.75in]{25}
        \caption*{$D_{13}$}
    \end{subfigure}
    \begin{subfigure}[t]{0.15\textwidth}
        \centering
        \includegraphics[height=0.75in]{26}
        \caption*{$D_{14}$}
    \end{subfigure}
    \begin{subfigure}[t]{0.15\textwidth}
        \centering
        \includegraphics[height=0.75in]{27}
        \caption*{$D_{15}$}
    \end{subfigure}
    \begin{subfigure}[t]{0.15\textwidth}
        \centering
        \includegraphics[height=0.75in]{30}
        \caption*{$D_{16}$}
    \end{subfigure}
    \begin{subfigure}[t]{0.15\textwidth}
        \centering
        \includegraphics[height=0.75in]{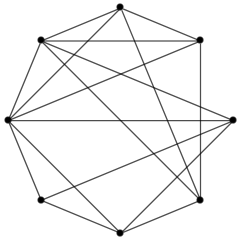}
        \caption*{$D_{17}*$}
    \end{subfigure}
    \begin{subfigure}[t]{0.15\textwidth}
        \centering
        \includegraphics[height=0.75in]{33}
        \caption*{$D_{18}$}
    \end{subfigure}
\end{figure}
\begin{figure}[h!]
    \centering
    \begin{subfigure}[t]{0.15\textwidth}
        \centering
        \includegraphics[height=0.75in]{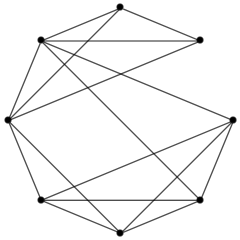}
        \caption*{$D_{19}$}
    \end{subfigure}
    \begin{subfigure}[t]{0.15\textwidth}
        \centering
        \includegraphics[height=0.75in]{38}
        \caption*{$D_{20}$}
    \end{subfigure}
    \begin{subfigure}[t]{0.15\textwidth}
        \centering
        \includegraphics[height=0.75in]{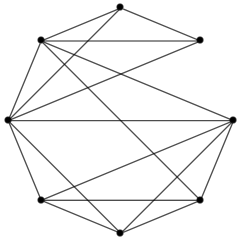}
        \caption*{$D_{21}*$}
    \end{subfigure}
    \begin{subfigure}[t]{0.15\textwidth}
        \centering
        \includegraphics[height=0.75in]{41}
        \caption*{$D_{22}$}
    \end{subfigure}
    \begin{subfigure}[t]{0.15\textwidth}
        \centering
        \includegraphics[height=0.75in]{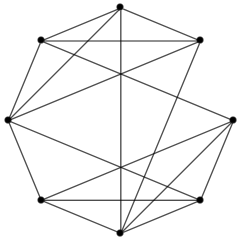}
        \caption*{$D_{23}*$}
    \end{subfigure}
    \begin{subfigure}[t]{0.15\textwidth}
        \centering
        \includegraphics[height=0.75in]{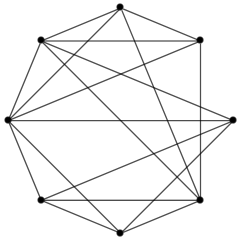}
        \caption*{$D_{24}*$}
    \end{subfigure}
\end{figure}
\begin{figure}[h!]
    \centering
    \begin{subfigure}[t]{0.15\textwidth}
        \centering
        \includegraphics[height=0.75in]{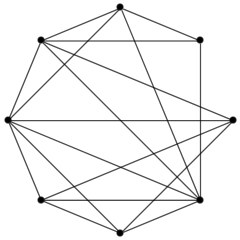}
        \caption*{$D_{25}*$}
    \end{subfigure}
    \begin{subfigure}[t]{0.15\textwidth}
        \centering
        \includegraphics[height=0.75in]{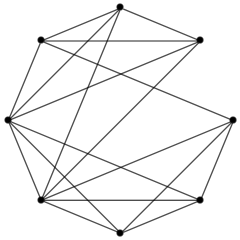}
        \caption*{$D_{26}*$}
    \end{subfigure}
    \begin{subfigure}[t]{0.15\textwidth}
        \centering
        \includegraphics[height=0.75in]{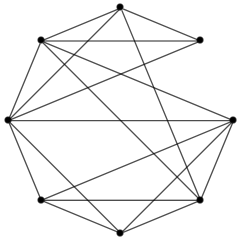}
        \caption*{$D_{27}*$}
    \end{subfigure}
    \begin{subfigure}[t]{0.15\textwidth}
        \centering
        \includegraphics[height=0.75in]{53}
        \caption*{$D_{28}$}
    \end{subfigure}
    \begin{subfigure}[t]{0.15\textwidth}
        \centering
        \includegraphics[height=0.75in]{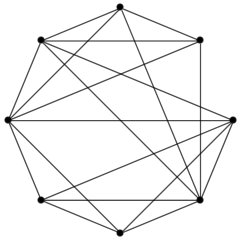}
        \caption*{$D_{29}*$}
    \end{subfigure}
    \begin{subfigure}[t]{0.15\textwidth}
        \centering
        \includegraphics[height=0.75in]{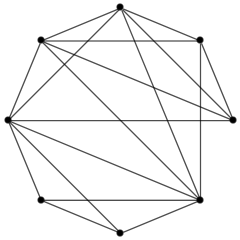}
        \caption*{$D_{30}*$}
    \end{subfigure}
\end{figure}
\begin{figure}[h!]
    \centering
    \begin{subfigure}[t]{0.15\textwidth}
        \centering
        \includegraphics[height=0.75in]{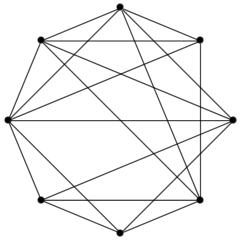}
        \caption*{$D_{31}*$}
    \end{subfigure}
    \begin{subfigure}[t]{0.15\textwidth}
        \centering
        \includegraphics[height=0.75in]{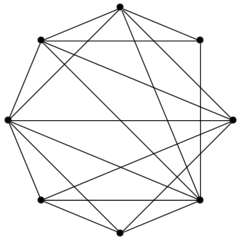}
        \caption*{$D_{32}*$}
    \end{subfigure}
    \begin{subfigure}[t]{0.15\textwidth}
        \centering
        \includegraphics[height=0.75in]{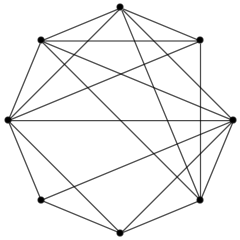}
        \caption*{$D_{33}*$}
    \end{subfigure}
    \begin{subfigure}[t]{0.15\textwidth}
        \centering
        \includegraphics[height=0.75in]{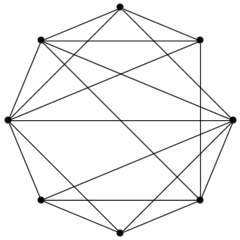}
        \caption*{$D_{34}*$}
    \end{subfigure}
    \begin{subfigure}[t]{0.15\textwidth}
        \centering
        \includegraphics[height=0.75in]{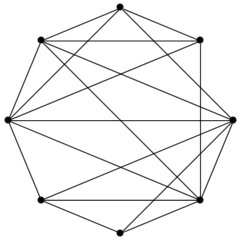}
        \caption*{$D_{35}*$}
    \end{subfigure}
    \begin{subfigure}[t]{0.15\textwidth}
        \centering
        \includegraphics[height=0.75in]{92}
        \caption*{$D_{36}$}
    \end{subfigure}
\end{figure}
\begin{figure}[h!]
    \centering
    \begin{subfigure}[t]{0.15\textwidth}
        \centering
        \includegraphics[height=0.75in]{93}
        \caption*{$D_{37}$}
    \end{subfigure}
    \begin{subfigure}[t]{0.15\textwidth}
        \centering
        \includegraphics[height=0.75in]{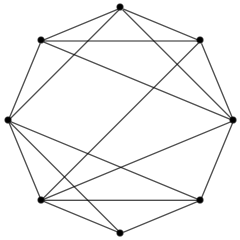}
        \caption*{$D_{38}*$}
    \end{subfigure}
    \begin{subfigure}[t]{0.15\textwidth}
        \centering
        \includegraphics[height=0.75in]{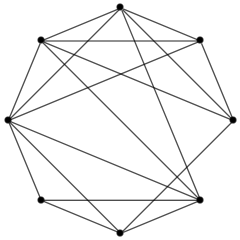}
        \caption*{$D_{39}*$}
    \end{subfigure}
    \begin{subfigure}[t]{0.15\textwidth}
        \centering
        \includegraphics[height=0.75in]{98}
        \caption*{$D_{40}$}
    \end{subfigure}
    \begin{subfigure}[t]{0.15\textwidth}
        \centering
        \includegraphics[height=0.75in]{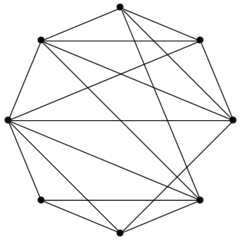}
        \caption*{$D_{41}*$}
    \end{subfigure}
    \begin{subfigure}[t]{0.15\textwidth}
        \centering
        \includegraphics[height=0.75in]{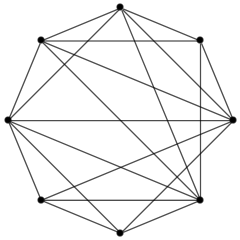}
        \caption*{$D_{42}*$}
    \end{subfigure}
\end{figure}
\begin{figure}[h!]
    \centering
    \begin{subfigure}[t]{0.15\textwidth}
        \centering
        \includegraphics[height=0.75in]{118}
        \caption*{$D_{43}$}
    \end{subfigure}
    \begin{subfigure}[t]{0.15\textwidth}
        \centering
        \includegraphics[height=0.75in]{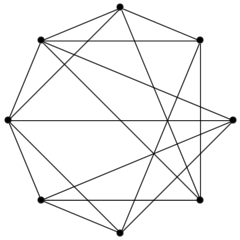}
        \caption*{$D_{44}*$}
    \end{subfigure}
    \begin{subfigure}[t]{0.15\textwidth}
        \centering
        \includegraphics[height=0.75in]{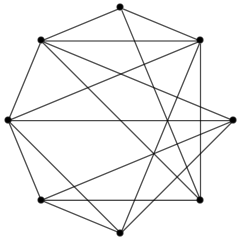}
        \caption*{$D_{45}*$}
    \end{subfigure}
    \begin{subfigure}[t]{0.15\textwidth}
        \centering
        \includegraphics[height=0.75in]{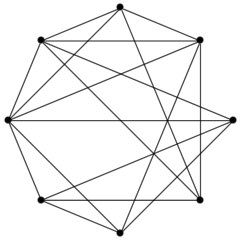}
        \caption*{$D_{46}*$}
    \end{subfigure}
    \begin{subfigure}[t]{0.15\textwidth}
        \centering
        \includegraphics[height=0.75in]{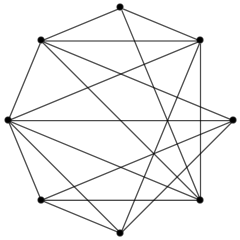}
        \caption*{$D_{47}*$}
    \end{subfigure}
    \begin{subfigure}[t]{0.15\textwidth}
        \centering
        \includegraphics[height=0.75in]{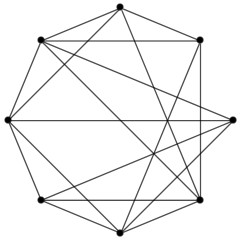}
        \caption*{$D_{48}*$}
    \end{subfigure}
\end{figure}
\begin{figure}[h!]
    \centering
    \begin{subfigure}[t]{0.15\textwidth}
        \centering
        \includegraphics[height=0.75in]{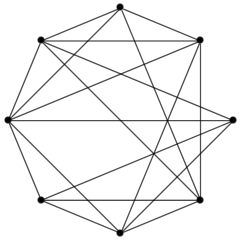}
        \caption*{$D_{49}*$}
    \end{subfigure}
    \begin{subfigure}[t]{0.15\textwidth}
        \centering
        \includegraphics[height=0.75in]{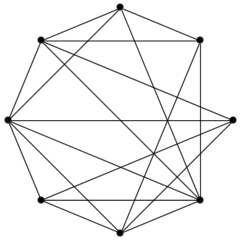}
        \caption*{$D_{50}*$}
    \end{subfigure}
    \begin{subfigure}[t]{0.15\textwidth}
        \centering
        \includegraphics[height=0.75in]{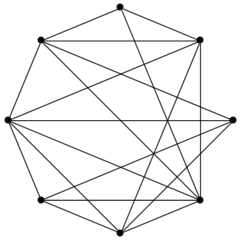}
        \caption*{$D_{51}*$}
    \end{subfigure}
    \begin{subfigure}[t]{0.15\textwidth}
        \centering
        \includegraphics[height=0.75in]{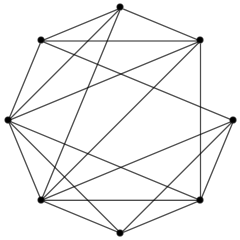}
        \caption*{$D_{52}*$}
    \end{subfigure}
    \begin{subfigure}[t]{0.15\textwidth}
        \centering
        \includegraphics[height=0.75in]{134}
        \caption*{$D_{53}$}
    \end{subfigure}
    \begin{subfigure}[t]{0.15\textwidth}
        \centering
        \includegraphics[height=0.75in]{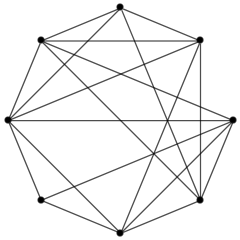}
        \caption*{$D_{54}*$}
    \end{subfigure}
\end{figure}
\begin{figure}[h!]
    \centering
    \begin{subfigure}[t]{0.15\textwidth}
        \centering
        \includegraphics[height=0.75in]{137}
        \caption*{$D_{55}$}
    \end{subfigure}
    \begin{subfigure}[t]{0.15\textwidth}
        \centering
        \includegraphics[height=0.75in]{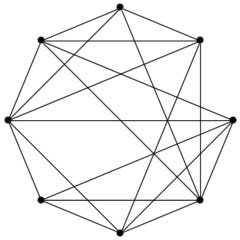}
        \caption*{$D_{56}*$}
    \end{subfigure}
    \begin{subfigure}[t]{0.15\textwidth}
        \centering
        \includegraphics[height=0.75in]{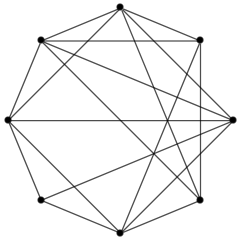}
        \caption*{$D_{57}*$}
    \end{subfigure}
    \begin{subfigure}[t]{0.15\textwidth}
        \centering
        \includegraphics[height=0.75in]{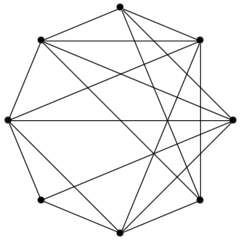}
        \caption*{$D_{58}*$}
    \end{subfigure}
    \begin{subfigure}[t]{0.15\textwidth}
        \centering
        \includegraphics[height=0.75in]{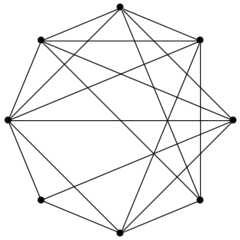}
        \caption*{$D_{59}*$}
    \end{subfigure}
    \begin{subfigure}[t]{0.15\textwidth}
        \centering
        \includegraphics[height=0.75in]{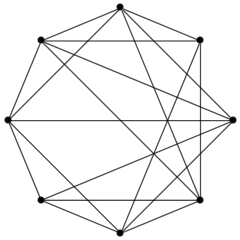}
        \caption*{$D_{60}*$}
    \end{subfigure}
\end{figure}
\begin{figure}[h!]
    \centering
    \begin{subfigure}[t]{0.15\textwidth}
        \centering
        \includegraphics[height=0.75in]{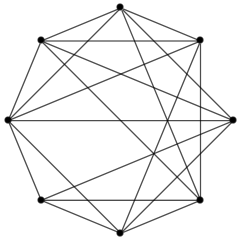}
        \caption*{$D_{61}*$}
    \end{subfigure}
    \begin{subfigure}[t]{0.15\textwidth}
        \centering
        \includegraphics[height=0.75in]{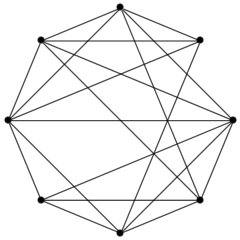}
        \caption*{$D_{62}*$}
    \end{subfigure}
    \begin{subfigure}[t]{0.15\textwidth}
        \centering
        \includegraphics[height=0.75in]{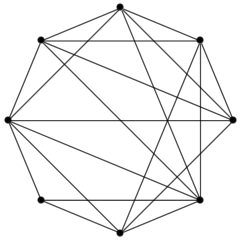}
        \caption*{$D_{63}*$}
    \end{subfigure}
    \begin{subfigure}[t]{0.15\textwidth}
        \centering
        \includegraphics[height=0.75in]{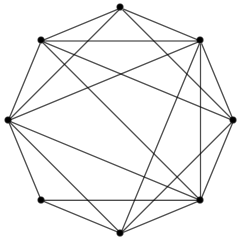}
        \caption*{$D_{64}*$}
    \end{subfigure}
    \begin{subfigure}[t]{0.15\textwidth}
        \centering
        \includegraphics[height=0.75in]{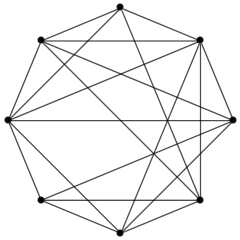}
        \caption*{$D_{65}*$}
    \end{subfigure}
    \begin{subfigure}[t]{0.15\textwidth}
        \centering
        \includegraphics[height=0.75in]{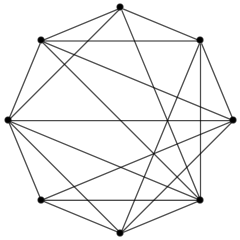}
        \caption*{$D_{66}*$}
    \end{subfigure}
\end{figure}
\begin{figure}[h!]
    \centering
    \begin{subfigure}[t]{0.15\textwidth}
        \centering
        \includegraphics[height=0.75in]{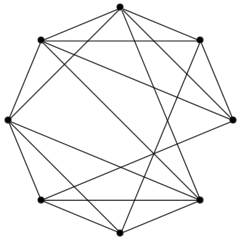}
        \caption*{$D_{67}*$}
    \end{subfigure}
    \begin{subfigure}[t]{0.15\textwidth}
        \centering
        \includegraphics[height=0.75in]{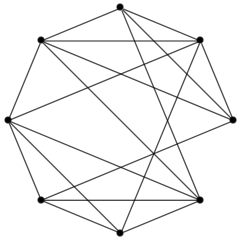}
        \caption*{$D_{68}*$}
    \end{subfigure}
    \begin{subfigure}[t]{0.15\textwidth}
        \centering
        \includegraphics[height=0.75in]{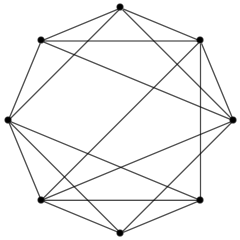}
        \caption*{$D_{69}*$}
    \end{subfigure}
    \begin{subfigure}[t]{0.15\textwidth}
        \centering
        \includegraphics[height=0.75in]{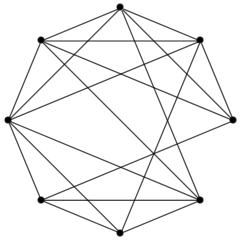}
        \caption*{$D_{70}*$}
    \end{subfigure}
    \begin{subfigure}[t]{0.15\textwidth}
        \centering
        \includegraphics[height=0.75in]{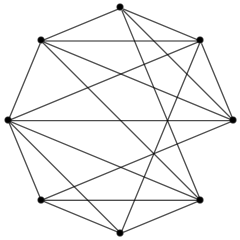}
        \caption*{$D_{71}*$}
    \end{subfigure}
    \begin{subfigure}[t]{0.15\textwidth}
        \centering
        \includegraphics[height=0.75in]{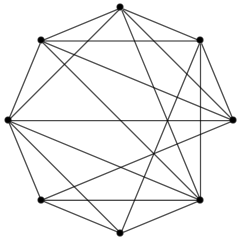}
        \caption*{$D_{72}*$}
    \end{subfigure}
\end{figure}
\begin{figure}[h!]
    \centering
    \begin{subfigure}[t]{0.15\textwidth}
        \centering
        \includegraphics[height=0.75in]{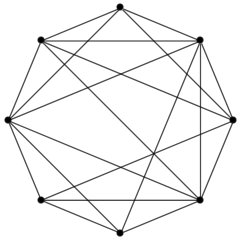}
        \caption*{$D_{73}*$}
    \end{subfigure}
    \begin{subfigure}[t]{0.15\textwidth}
        \centering
        \includegraphics[height=0.75in]{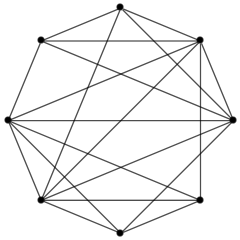}
        \caption*{$D_{74}*$}
    \end{subfigure}
    \begin{subfigure}[t]{0.15\textwidth}
        \centering
        \includegraphics[height=0.75in]{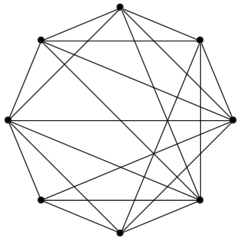}
        \caption*{$D_{75}*$}
    \end{subfigure}
    \begin{subfigure}[t]{0.15\textwidth}
        \centering
        \includegraphics[height=0.75in]{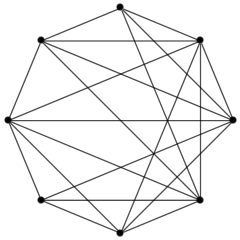}
        \caption*{$D_{76}*$}
    \end{subfigure}
    \begin{subfigure}[t]{0.15\textwidth}
        \centering
        \includegraphics[height=0.75in]{207}
        \caption*{$D_{77}$}
    \end{subfigure}
    \begin{subfigure}[t]{0.15\textwidth}
        \centering
        \includegraphics[height=0.75in]{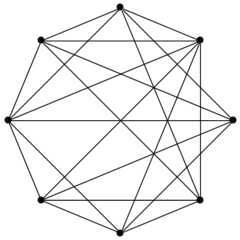}
        \caption*{$D_{78}*$}
    \end{subfigure}
\end{figure}
\begin{figure}[h!]
    \centering
    \begin{subfigure}[t]{0.15\textwidth}
        \centering
        \includegraphics[height=0.75in]{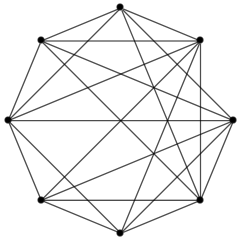}
        \caption*{$D_{79}*$}
    \end{subfigure}
    \begin{subfigure}[t]{0.15\textwidth}
        \centering
        \includegraphics[height=0.75in]{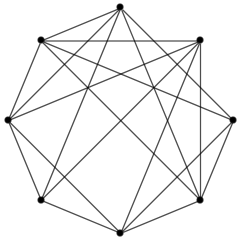}
        \caption*{$D_{80}*$}
    \end{subfigure}
    \begin{subfigure}[t]{0.15\textwidth}
        \centering
        \includegraphics[height=0.75in]{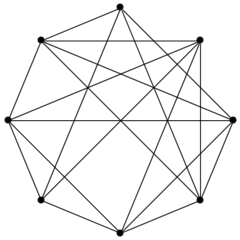}
        \caption*{$D_{81}*$}
    \end{subfigure}
    \begin{subfigure}[t]{0.15\textwidth}
        \centering
        \includegraphics[height=0.75in]{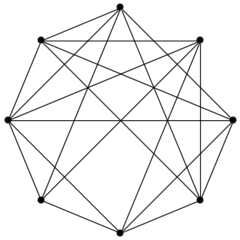}
        \caption*{$D_{82}*$}
    \end{subfigure}
    \begin{subfigure}[t]{0.15\textwidth}
        \centering
        \includegraphics[height=0.75in]{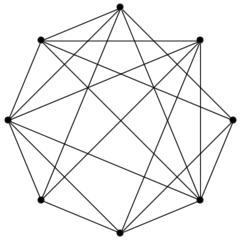}
        \caption*{$D_{83}*$}
    \end{subfigure}
    \begin{subfigure}[t]{0.15\textwidth}
        \centering
        \includegraphics[height=0.75in]{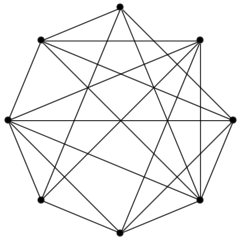}
        \caption*{$D_{84}*$}
    \end{subfigure}
\end{figure}
\begin{figure}[h!]
    \centering
    \begin{subfigure}[t]{0.15\textwidth}
        \centering
        \includegraphics[height=0.75in]{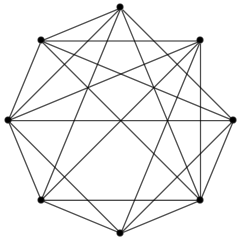}
        \caption*{$D_{85}*$}
    \end{subfigure}
    \begin{subfigure}[t]{0.15\textwidth}
        \centering
        \includegraphics[height=0.75in]{224}
        \caption*{$D_{86}$}
    \end{subfigure}
    \begin{subfigure}[t]{0.15\textwidth}
        \centering
        \includegraphics[height=0.75in]{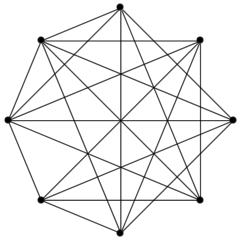}
        \caption*{$D_{87}*$}
    \end{subfigure}
    \begin{subfigure}[t]{0.15\textwidth}
        \centering
        \includegraphics[height=0.75in]{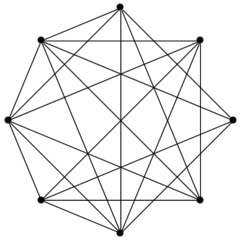}
        \caption*{$D_{88}*$}
    \end{subfigure}
    \begin{subfigure}[t]{0.15\textwidth}
        \centering
        \includegraphics[height=0.75in]{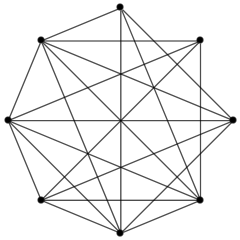}
        \caption*{$D_{89}*$}
    \end{subfigure}
    \begin{subfigure}[t]{0.15\textwidth}
        \centering
        \includegraphics[height=0.75in]{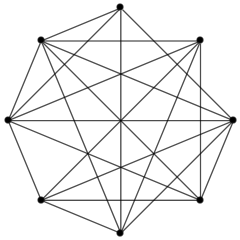}
        \caption*{$D_{90}*$}
    \end{subfigure}
\end{figure}
\begin{figure}[h!]
    \centering
    \begin{subfigure}[t]{0.15\textwidth}
        \centering
        \includegraphics[height=0.75in]{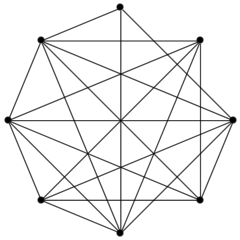}
        \caption*{$D_{91}*$}
    \end{subfigure}
    \begin{subfigure}[t]{0.15\textwidth}
        \centering
        \includegraphics[height=0.75in]{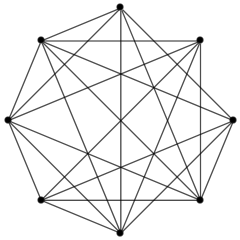}
        \caption*{$D_{92}*$}
    \end{subfigure}
    \begin{subfigure}[t]{0.15\textwidth}
        \centering
        \includegraphics[height=0.75in]{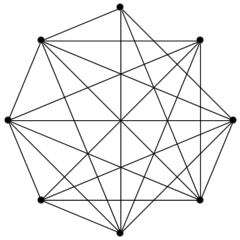}
        \caption*{$D_{93}*$}
    \end{subfigure}
    \begin{subfigure}[t]{0.15\textwidth}
        \centering
        \includegraphics[height=0.75in]{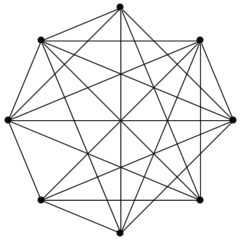}
        \caption*{$D_{94}*$}
    \end{subfigure}
    \begin{subfigure}[t]{0.15\textwidth}
        \centering
        \includegraphics[height=0.75in]{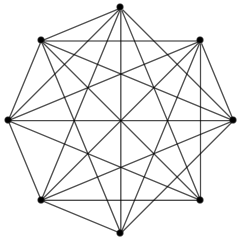}
        \caption*{$D_{95}*$}
    \end{subfigure}
    \begin{subfigure}[t]{0.15\textwidth}
        \centering
        \includegraphics[height=0.75in]{295}
        \caption*{$D_{96}$}
    \end{subfigure}
\end{figure}

\end{document}